\newtheoremstyle{theorem}{5pt}{5pt}{\itshape}{}{\bfseries}{.}{.5em}{}
\numberwithin{equation}{section}
\theoremstyle{theorem}
\newtheorem{theorem}{Theorem}[section]
\newtheorem{lemma}[theorem]{Lemma}
\newtheorem{proposition}[theorem]{Proposition}
\newtheorem{mainassumption}{Assumption}
\newtheorem{main}{Theorem}
\newtheorem{main2}[main]{Theorem}
\newtheorem{examples}{Example}
\theoremstyle{definition}
\newtheorem{definition}[theorem]{Definition}
\newtheorem*{notation*}{Notation}
\theoremstyle{remark}
\newtheorem{remark}[theorem]{Remark}
\newtheorem*{remark*}{Remark}
\titleformat{\section}{\normalfont\bfseries}{\large\thesection.}{.5em}{\large}
\titlespacing*{\section}{0pt}{3.5ex plus 1ex minus .2ex}{2.3ex plus .2ex}
\newcommand{\beq}{\begin {eqnarray}}
\newcommand \eeq {\end {eqnarray}}
\newcommand \ba {\begin {eqnarray*}}
\newcommand \ea {\end  {eqnarray*}}
\newcommand{\R}{\mathbb{R}}
\newcommand{\N}{\mathbb{N}}
\newcommand{\Z}{\mathbb{Z}}
\newcommand\p{\partial}
\newcommand{\supp}{\operatorname{supp}}
\begin{document}
\title{\bf\Large Gel'fand's inverse problem for the graph Laplacian}
\author{Emilia Bl{\aa}sten, Hiroshi Isozaki, Matti Lassas and Jinpeng Lu}
\date{}

\AtEndDocument{\bigskip{\footnotesize%
  \textsc{Emilia Bl{\aa}sten: Computational Engineering, School of Engineering Science, LUT University, Lahti campus, 15210 Lahti, Finland} \par  
  \textit{Email address}: \texttt{emilia.blasten@iki.fi} \par
  
  \addvspace{\smallskipamount}

  \textsc{Hiroshi Isozaki: Graduate School of Pure and Applied Sciences, Professor Emeritus, University of Tsukuba, Tsukuba, 305-8571, Japan} \par  
  \textit{Email address}: \texttt{isozakih@math.tsukuba.ac.jp} \par
  
  \addvspace{\smallskipamount}

  \textsc{Matti Lassas: Department of Mathematics and Statistics, University of Helsinki, FI-00014 Helsinki, Finland} \par  
  \textit{Email address}: \texttt{matti.lassas@helsinki.fi} \par
  
   \addvspace{\smallskipamount}
   
     \textsc{Jinpeng Lu: Department of Mathematics and Statistics, University of Helsinki, FI-00014 Helsinki, Finland} \par  
     \textit{Email address}: \texttt{jinpeng.lu@helsinki.fi} \par
  
%   \addvspace{\smallskipamount}
     
%   \textit{Email addresses}: \texttt{emilia.blasten@iki.fi}, \texttt{isozakih@math.tsukuba.ac.jp}, \\
%    \hspace*{2.8cm} \texttt{matti.lassas@helsinki.fi} and \texttt{jinpeng.lu@helsinki.fi}
 %textit{Email address}: \texttt{matti.lassas@helsinki.fi} \par
  
%
%  \textsc{ Department of Mathematics and Statistics, University of Helsinki, FI-00014 Helsinki, Finland} \par
%  \textit{Email address}: \texttt{jinpeng.lu@helsinki.fi} \par
}}

\maketitle

\begin{abstract}
We study the discrete Gel'fand's inverse boundary spectral problem of determining a finite weighted graph. Suppose that the set of vertices of the graph is a union of two disjoint sets: $X=B\cup G$, where $B$ is called the set of the boundary vertices 
and $G$ is called the set of the interior vertices. We consider the case where the vertices in the set $G$ and the edges connecting them are unknown.
Assume that we are given the set $B$ and the pairs $(\lambda_j,\phi_j|_B)$, where $\lambda_j$
are the eigenvalues of the graph Laplacian and $\phi_j|_B$ are the values of the corresponding eigenfunctions at the vertices in $B$.
We show that the graph structure, namely the unknown vertices in $G$ and the edges connecting them, along with the weights, can be uniquely determined from the given data, if every boundary vertex is connected to only one interior vertex and the graph satisfies the following property:
any subset $S\subseteq G$ of cardinality $|S|\geqslant 2$ contains two extreme points.
A point $x\in S$ is called an extreme point of $S$ if there exists a point $z\in B$ such that
$x$ is the unique nearest point in $S$ from $z$ with respect to the graph distance. 
This property is valid for several standard types of lattices and their perturbations.
\end{abstract}

%2010/2020 Mathematics Subject Classification: 05C50, 05C22, 52C25.
%Keywords: inverse boundary spectral problem, graph Laplacian, boundary spectral data.

\section{Introduction}

In this paper, we consider the discrete version of Gel'fand's inverse boundary spectral problem, defined for a finite weighted graph
and the graph Laplacian on it. We assume that we are given the Neumann eigenvalues of
 the graph Laplacian and the values of the corresponding Neumann eigenfunctions at a pre-designated subset of vertices, called the boundary vertices. 
 
Gel'fand's inverse boundary spectral problem was originally formulated in  \cite{Ge}
for partial differential equations. For partial differential operators, one considers
an  $n$-dimensional Riemannian manifold $(M,g)$ with boundary and the Neumann eigenvalue problem
\beq
\label{eq:problem}
& & - \Delta_g \Phi_j(x) = \omega_j\Phi_j(x), \quad \text{for }x\in M,\\
& &\ \ \p_\nu \Phi_j|_{\p M} = 0,\quad
\eeq
where $\Delta_g$ is the Laplace--Beltrami operator with respect
to the Riemannian metric $g$ on $M$, and $\Phi_j:M\to \R$ are the eigenfunctions corresponding to the 
eigenvalues $\omega_j\in \R$. In local coordinates $(x^i)_{i=1}^n$, the Laplacian has the  representation
\begin{equation}
	\Delta_g u= {\rm det}(g)^{-\frac12} \sum_{i,j=1}^n  \frac{\partial}{\partial x^i}\left({\rm det}(g)^{\frac12} g^{ij} \frac{\partial}{\partial x^j} u\right),
\end{equation}
where $g(x)=[g_{ij}(x)]_{i,j=1}^n$, ${\rm det}(g) = {\rm det}(g_{ij}(x))$ and $[g^{ij}]^n_{i,j=1} = g(x)^{-1}$. 

Gel'fand's inverse problem is to find the topology, differential structure and Riemannian metric of $(M,g)$ when one is given the boundary $\p M$ and the pairs $( \omega_j,\Phi_j|_{\p M}),$ $j=1,2,\dots$, where $\omega_j$ are the Neumann eigenvalues and $\Phi_j|_{\p M}$ are the Dirichlet boundary values of the corresponding eigenfunctions. Here, the  eigenfunctions $\Phi_j$
are assumed to form a complete orthonormal family in $L^2(M)$. 
We review earlier results on this problem and the related problems in Section \ref{subsec: earlier results}.

To formulate the discrete Gel'fand's inverse problem,
we consider 
 a finite weighted graph.
 We use the following terminology. When $X$ is the set of vertices of a finite graph, we can declare any subset
 $B\subseteq X$ to be the set of the boundary vertices, denoted by $B=\p G$, and call the set $G=X{{-}} B$ the set of the interior vertices of $X$. This terminology is motivated by inverse problems where one typically aims to reconstruct objects in a set $\Omega\subseteq \R^n$ 
using observations on the boundary $\p \Omega$. In our case, we 
 aim to reconstruct objects in a vertex set $G\subseteq X$ 
from observations on the boundary $\p G$.

%  That is, we consider
% a finite non-directed graph with vertexes $X={{G}}\cup\partial {{G}}$ and edges $E$. 
% The vertexes $x\in G$ are called the interior vertexes and the vertexes $z\in \p G$ are called the boundary vertexes. 
 For $x,y\in X={{G}}\cup\partial {{G}}$, we denote $x\sim y$ if
 there is an edge in the edge set $E$ connecting $x$ to $y$, that is, $\{x,y\}\in E$. Every edge $\{x,y\}\in E$ has 
 a weight $g_{xy}=g_{yx}>0$ and every vertex $x\in  {{G}}$ has a measure $\mu_x>0$. 
For a function $u:{{G}}\cup\partial {{G}} \to \mathbb{R}$ defined on the whole vertex set,
the graph Laplacian $\Delta_G$ on ${{G}}$ is defined by
\begin{equation}\label{Laplaciandef}
  \big(\Delta_G u\big)(x) = \frac{1}{\mu_x}\sum_{\substack{y\sim x\\y\in G\cup \partial G}} g_{xy}\big( u(y) - u(x)\big), \quad x\in {{G}},
\end{equation}
and the Neumann boundary value $\partial_{\nu} u$ of $u$ is defined by
\begin{equation}\label{Neumanndef}
\big(\partial_{\nu} u\big)(z)=\frac{1}{\mu_z}\sum_{\substack{x\sim z\\ x\in G}} g_{xz} \big( u(x) - u(z)
  \big), \quad z\in \partial G.
\end{equation}  
We consider the Neumann eigenvalue problem
 \beq
\label{eq:problem disc}
& & - \Delta_G\phi_j (x)= \lambda_j\phi_j(x), \quad \text{for }x\in{{G}},  \\
& &\ \ \p_\nu \phi_j|_{\p {{G}}} = 0. 
\eeq

The discrete Gel'fand's inverse problem is to find the set of interior vertices ${{G}}$, the edge structure
of $({{G}} \cup \p {{G}},E)$ and the weights $g,\mu$,
when one is given the boundary $\p G$ and the pairs $( \lambda_j,\phi_j|_{\p G}),$ $j=1,2,\dots,N$, $N=|G|$, where $|G|$ is the number of elements in $G$.
Here, $\{\phi_j\}_{j=1}^N$ is a complete orthonormal family
of eigenfunctions and their Dirichlet boundary values, $\phi_j|_{\p G}=\big(\phi_j(z) \big)_{z\in \p G}$, are vectors in $\R^{|\p G|}$.

We mention that with suitable choices of $g,\mu$, our definition of the graph Laplacian \eqref{Laplaciandef} includes widely used Laplacians in graph theory, in particular, the combinatorial Laplacian when $g,\mu\equiv 1$, and the normalized Laplacian when $g\equiv 1,\, \mu_x=\textrm{deg}(x)$. The spectra of these two particular operators are mostly unrelated for general graphs and were usually studied separately.

%\begin{figure}[h]
%  \begin{center}
%    \includegraphics[width=0.9\linewidth]{Isospectral_boundary}
%    \caption{Isospectral non-isomorphic graphs for the combinatorial Laplacian, on which there exists an orthonormal family of Neumann eigenfunctions such that their values are identical on respective boundary vertex sets. The boundary vertices have white centers; the interior vertices have black centers.}
%    \label{fig_isospectral}
%  \end{center}
%\end{figure}
%Observe that in this counterexample, the choice of both boundaries violates our Two-Points Condition. And in both graphs, nonzero eigenfunction exists.
%With the Neumann boundary condition imposed, this type of boundary vertices does not change spectral data.
%eigenvalue -3 has multiplicity 2. There exists a choice of eigenfunctions that are identical on the boundary.

\smallskip
Solving the discrete Gel'fand's inverse problem is  not possible without further assumptions due to the existence of isospectral graphs, see \cite{isospectralgraphs,FK,Tan}.
%see Figure \ref{fig_isospectral}. 
%The eigenvalues and corresponding eigenfunctions for this counterexample are shown in Appendix \ref{counterexample_eigenvector}.
%We mention that the graphs which consist of only the black vertices and edges in Figure \ref{fig_isospectral} are the isospectral graphs for the combinatorial Laplacian with the least number of vertices, see \cite{isospectralgraphs,FK,Tan}. 
One of the main difficulties we encounter in solving the problem is that the graph Laplacian can have nonzero eigenfunctions which vanish identically on a part of the graph. This phenomenon, intuitively caused by the symmetry of the graph, can make one part of the graph invisible to the spectral data measured at another part. Therefore one needs to impose appropriate assumptions. On one hand, the assumptions have to break some symmetry of the graph to make the inverse problem solvable, and also designate sufficiently many boundary vertices to measure data on. On the other, the assumptions need to include a large class of interesting graphs besides trees, since trees are already well-understood. In this paper, we introduce the Two-Points Condition (\cref{assumption}), and prove that the inverse boundary spectral problem on finite graphs is solvable with this assumption. 
Our result can be applied to detect local perturbations and recover potential functions on periodic lattices (\cite{AIM16,AIM}), in particular, to probe graphene defects from the scattering matrix. We will address potential applications in another work.  
%Quantum random walks and random walks for example.

\smallskip
We start by defining the notations for undirected
simple graphs, where weights on vertices and edges are
considered. These weights are related to physical situations where graph models are applicable.

\subsection{Finite graphs}\label{subsection-finitegraph}

A graph is generally denoted by a pair $(X,E)$ with $X$ being the set
of vertices and $E$ being the set of edges between vertices. A graph
$(X,E)$ is finite if both $X$ and $E$ are finite. 
%A graph is said to be undirected if all edges are undirected.
A graph is said to be simple if there is at most one edge between any pair
of vertices and no edge between the same vertex. For undirected simple graphs, 
edges are two-element subsets of $X$. We endow a general
graph with the following additional structures that affect wave
propagation on the graph.
\begin{definition}[Weighted graph with boundary]
We say that $\mathbb G = (G,\partial G, E,
  \mu, g)$ is a \emph{weighted graph with boundary} if the following conditions are satisfied.
  \begin{itemize}
  \item $G\cup\partial G$ is the set of vertices (points), $G\cap \partial G=\emptyset$; $E$ is the set of edges. Elements of $G$ are called interior
    vertices and elements of $\partial G$ are called boundary
    vertices. We require $(G \cup \partial G,E)$ to be an undirected simple
    graph.
  \item $\mu : G\cup\partial G \to \R_+$ is the weight function on the
    set of vertices.
  \item $g : E\to\R_+$ is the weight function on the set of edges.
  \end{itemize}
\end{definition}

%Note that in the literature on graph theory, the boundary of a graph is usually defined in different ways, see e.g. \cite{Graphboundary,C}. In this paper, the type of boundary vertices we will use later resembles more the landmarks of a graph, as in \cite{Landmark}.

We use the following terminology. 
%The graph with boundary $\mathbb{G}$ is 
%\emph{finite} if $(G \cup \partial G,E)$ is finite. 
A graph with boundary $\mathbb G$ is
\emph{finite} if $(G \cup\partial G, E)$ is finite. 
Vertices $x$ and $y$
are adjacent, denoted by $x\sim y$, if $\{x,y\}\in E$, i.e., there is an edge connecting $x$ to $y$. When $x\sim y$, we
denote by $g_{xy}$, or equivalently $g_{yx}$, the weight of the edge
connecting $x$ to $y$. We write $\mu_x$ short for $\mu(x)$.

The degree of a vertex $x$ of $\mathbb{G}$ is defined as the number of
vertices connected to $x$ by edges in $E$, denoted by $\deg_E(x)$ or $\deg_{\mathbb{G}}(x)$. The neighbourhood
$N(\partial G)$ of $\partial G$ is defined by 
$$N(\partial G)=\{x\in G:
x\sim z \textrm{ for some }z\in \partial G\}\cup\partial G.$$
When the weights are not relevant in a specific context, we make use of the
notation $(G,\partial G,E)$ for an unweighted graph with boundary.

\begin{definition}[Paths and metric]\label{distance}
  Let $x,y\in G\cup\partial G$. A \emph{path} of $(G\cup \partial
  G,E)$ from $x$ to $y$ is a sequence of vertices $(v_j)_{j=0}^J$
  satisfying $v_0=x$, $v_J=y$ and $v_j\sim v_{j+1}$ for $j=0, \ldots,
  J-1$. The length of the path is $J$. The \emph{distance}
  between $x$ and $y$, denoted by $d(x,y)$, is the minimal length
  among all paths from $x$ to $y$. In other words, the distance $d(x,y)$ 
  is the minimal number of edges in paths that connect $x$ to $y$.
  The distance
   is defined to be infinite if there is no
  path from $x$ to $y$. An undirected graph $(G\cup \partial
  G,E)$ can be considered as a discrete metric space equipped with the distance function $d$.
An undirected graph is \emph{connected} if there exists a path between any pair of vertices.
\end{definition}

A graph with boundary $\mathbb G$ is said to be connected if $(G \cup\partial G, E)$ is connected. 
We say that $\mathbb G$ is \emph{strongly connected}, if it is still connected after one removes all edges connecting boundary vertices to boundary vertices (see Definition \ref{reduceddef}).

\smallskip
We remark that in our setting, any pair of adjacent vertices has distance $1$, while different choices of distances appear in other settings. If the graph sits in a manifold, it is more natural to use the intrinsic distance of the underlying manifold. For this type of graphs, additionally with geometric choices of weights, the graph Laplacian \eqref{Laplaciandef} can be used to approximate the standard Laplacians on the manifold, as long as the graphs are sufficiently dense (\cite{BIK1,BIK2,BIKL,Lu1}).

\begin{definition}\label{extreme}
  Given a subset $S\subseteq G$, we say a
  point $x_0\in S$ is an \emph{extreme point of $S$ with respect to
    $\partial G$}, if there exists a point $z\in \partial G$ such that
  $x_0$ is the unique nearest point in $S$ from $z$, with respect to
  the distance $d$ on $(G\cup\partial G,E)$.
\end{definition}

\begin{mainassumption}\label{assumption}
  We impose the following assumptions on the finite graph $(G,\partial G,E)$.
  \begin{enumerate}[\quad(1)]
  \item \label{as1case1} For any subset $S\subseteq G$ with
    cardinality at least $2$, there exist at least two extreme points
    of $S$ with respect to $\partial G$. We refer to this condition as
    the Two-Points Condition.
  \item \label{as1case2} For any $z\in\partial G$ and any pair of distinct points $x,y\in G$, if $x\sim z,\,y\sim z$, then $x\sim y$.
    \end{enumerate}
\end{mainassumption}

Note that \cref{as1case2} of \cref{assumption} is void if
every boundary point is connected to only one interior point. Hence
any graph can be adjusted to satisfy \cref{as1case2} by attaching an
additional edge to every boundary point and declaring the added vertices as the new boundary points. We remark that \cref{as1case2} is essential for proper wavefront behaviour (see \cref{wavefront}).

One can view the Two-Points Condition (\cref{as1case1} of
\cref{assumption}) as a criterion of choosing appropriate boundary
points for solving the inverse boundary spectral problem. As
an intuitive example in the continuous setting, any compact subset of
a square in $\R^2$ has at least two extreme points unless it is a
single point set. In this case, two extreme points can be chosen by
taking a point achieving the maximal height and a point achieving the minimal
height with respect to one edge of the square. The boundary points
realizing the extreme point condition are the vertical projections of
those two chosen points to the proper edges (see
\cref{fig_square2points}). Several types of graphs satisfying the
Two-Points Condition are discussed in \cref{section-examples}.

%\begin{wrapfigure}{r}{50mm}
%
%%\begin{center}
%\vspace{-4mm}
%
%%\includegraphics[width=48mm]{swiss-roll}
% \includegraphics[width=4.5cm]{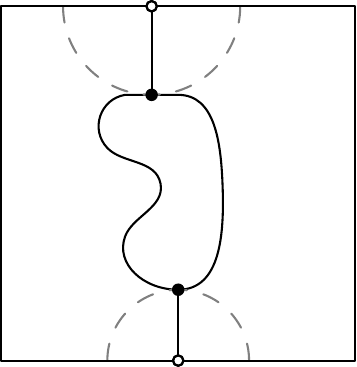}
%
%%\end{center}
%\caption{\it 
%Any non-singleton compact subset of the unit square in
%      $\R^2$ has at least two extreme points with respect to the
%      boundary of the square.}
%    \label{fig_square2points}
%\vspace{-2mm}
%
%\end{wrapfigure}
%
\begin{figure}[h]
  \begin{center}
    \includegraphics[width=4cm]{}
    \caption{Any non-singleton compact subset of the unit square in
      $\R^2$ has at least two extreme points with respect to the
      boundary of the square.}
    \label{fig_square2points}
  \end{center}
  \vspace{-5mm}
\end{figure}

From now on, let $\mathbb{G}$ be a finite weighted graph with boundary. For a function $u:G\cup\partial G \to \mathbb{R}$, its graph Laplacian on $G$ is defined by the formula \eqref{Laplaciandef}.
%\begin{equation}\label{Laplaciandef}
%  \big(\Delta_G u\big)(x) = \frac{1}{\mu_x}\sum_{\substack{y\sim x\\y\in G\cup \partial G}} g_{xy}\big( u(y) - u(x)\big), \quad x\in G.
%\end{equation}
Recall that the Neumann boundary value of $u$ is defined by the formula
\eqref{Neumanndef}, 
%\begin{equation}\label{Neumanndef}
%\big(\partial_{\nu} u\big)(z)=\frac{1}{\mu_z}\sum_{\substack{x\sim z\\x\in G}} g_{xz} \big( u(x) - u(z)
%  \big), \quad z\in \partial G,
%\end{equation}
see e.g. \cite{C,ChungYau}.
For $u_1,u_2:G\cup\partial G\to \mathbb{R}$, we define the $L^2(G)$-inner product by
\begin{equation}\label{innerproduct}
\langle u_1 ,u_2 \rangle_{L^2(G)} = \sum_{x\in G} \mu_x u_1(x)u_2(x).
\end{equation}
For a finite graph with boundary, the function space $L^2(G)$ is exactly the space of real-valued functions on $G\cup\partial G$ equipped with the inner product (\ref{innerproduct}). Note that the inner product is calculated only on the interior $G$ and not on the boundary $\partial G$. The main reason for such consideration is that we mostly deal with functions $u$ satisfying $\partial_\nu u|_{\partial G} = 0$, in which case the values of $u$ on $\partial G$ are uniquely and linearly determined by the values on $G$, see \eqref{uValuesOnBndry}.

Let $q:G\to \mathbb{R}$ be a potential function, and we consider the following Neumann eigenvalue problem for the discrete Schr$\ddot{\textrm{o}}$dinger operator $-\Delta_G+q$.
\begin{equation} \label{eigenvalueproblem}
    \begin{cases}
      (-\Delta_G+q)u(x)=\lambda u(x), &x\in G, \lambda\in \mathbb{R},\\
      \partial_{\nu} u|_{\partial G}=0\, .
    \end{cases}
\end{equation}
Note that all Neumann eigenvalues are real, because the Neumann graph Laplacian is a self-adjoint operator on real-valued functions on $G$ with respect to the inner product (\ref{innerproduct}) due to Lemma \ref{Green}. 
In particular, the number of Neumann eigenvalues is equal to $|G|$, the number of interior vertices.

\begin{definition}\label{Neumannboundarydata}
  Let $\mathbb G$ be a finite weighted graph with boundary, and $q:G\to \mathbb{R}$
  be a potential function. A collection of data $(\lambda_j, \phi_j|_{\partial
    G})_{j=1}^N$ is called the \emph{Neumann boundary spectral data} of $(\mathbb{G},q)$, if
    
%  \begin{itemize}
%  \item
   $\bullet\;$ $\lambda_j\in\R$, $\phi_j : G\cup\partial G \to \R$, $N=|G|$
    is the number of interior vertices of $\mathbb G$;
%  \item 

   $\bullet\;$ the functions $\phi_j$ are Neumann eigenfunctions with respect to Neumann eigenvalues $\lambda_j$ for the equation (\ref{eigenvalueproblem}), namely
    \begin{equation} \label{neumannEigFunc}
      (-\Delta_G + q)\phi_j = \lambda_j \phi_j, \qquad \partial_\nu
      \phi_j|_{\partial G} = 0;
    \end{equation}
    
%  \item 
  $\bullet\;$ the functions $\phi_j$ form an orthonormal basis of $L^2(G)$.
%  \end{itemize}
\end{definition}

\begin{remark}
%  A graph can have multiple different spectral data according to the
%  above definition. For example we might perturb the indices of the
%  various $(\lambda_j,\phi_j)$, but also have orthogonal
%  transformations of the various functions $\phi_j$ that correspond to
%  the same eigenvalue. The following explains this in more
%  detail. However, to keep notation simple when solving the inverse
%  problem, we will use the simple definition of equality in
%  \cref{aPrioriSpec}.

%  If let's say $\Phi = (\phi_1,\phi_2,\ldots,\phi_m)$ are eigenvectors
%  to the eigenvalue $\lambda$, and $L$ is an $m\times m$ orthogonal
%  matrix, then for each $i=1,2,\ldots, m$ the vector $\sum_{j=1}^m
%  L_{ij} \phi_j$ is an eigenvector to the eigenvalue $\lambda$. Let us
%  denote it by $L_i \Phi$. We note that $\langle L_i \Phi, L_j\Phi
%  \rangle_{L^2(G)} = \delta_{ij} = \langle \phi_i, \phi_j
%  \rangle_{L^2(G)}$ so $L$ preserves orthonormality in the subspce of
%  $L^2(G)$ spanned by the eigenfunctions in $\Phi$. Then we can
%  replace the vectors $\phi_1,\ldots,\phi_m$ by
%  $L_1\Phi,\ldots,L_m\Phi$ and still have a boundary spectral data of
%  $(\mathbb G,q)$.
There are multiple choices of Neumann boundary spectral data for a given graph. More precisely,
  given two choices of Neumann boundary spectral data $(\lambda_j,\phi_j |_{\partial G})_{j=1}^N$ and $ (\tilde{\lambda}_j,\tilde{\phi}_j |_{\partial G})_{j=1}^N$ of
  $(\mathbb G, q)$, they are equivalent
  %in the sense of Definition \ref{Neumannboundarydata}, if
  if
  
%  \begin{enumerate}[(i)] \itemsep0em
%  \item 
  (i) there exists a permutation $\sigma$ of $\{1,\ldots,N\}$ such that
    $\tilde{\lambda}_{\sigma(j)} = \lambda_j$ for all $j$;
    
%  \item 
  (ii) for any fixed $k$, there exists an orthogonal
    matrix $O$ such that
    \begin{equation*} \label{orthogonalTransf}
      \tilde{\phi}_{\sigma(i)}|_{\partial G} = \sum_{j\in L_k} O_{ij} \phi_j|_{\partial G},
    \end{equation*}
    for all $i\in L_k$, where $L_{k} = \{j |\, \lambda_j = \lambda_{k}\}$ and the matrix $O$ is of dimension $|L_k|$.
    
%  \end{enumerate}
%  Note that in the second condition each of the $f_j,f'_j$ are given
%  only on the boundary $\partial G$, and \eqref{orthogonalTransf} also
%  holds on the boundary. So if $(\lambda_j,f_j)_{j=1}^M$ is a Neumann
%  boundary spectral data of $(\mathbb G,q)$, it is not a-priori clear
%  if the equivalent $(\lambda_j',f_j')_{j=1}^N$ would be a Neumann
%  boundary data too. But it is: if $f_j = \phi_j|_{\partial G}$ for
%  Neumann eigenfunctions $\phi_j$, then set $\phi'_{\iota(i)} =
%  \sum_{j\in I(j_0)} L_{ij}\phi_j$ for all $i\in I(j_0)$ and all
%  $j_0$. It is then trivial to verify that $\phi'_j$ are all Neumann
%  eigenfunctions to the eigenvalue $\lambda'_j$ and that they form an
%  orthonormal basis of $L^2(G)$.
 In fact, this is the only non-uniqueness in the choice of Neumann boundary spectral data (a linear algebra fact).
%  if $(\lambda_j,\phi_j)_{j=1}^N$ and
%  $(\lambda'_j,\phi'_j)_{j=1}^N$ are two spectral decompositions of
%  the Neumann Laplacian, then by the spectral theorem for symmetric
%  real-valued matrices the existence
%  of the rotation $L$ holds for any index $j_0$ (it is just the
%  constant 1 when $|I(j_0)|=1$), so the Neumann boundary spectral data
%  are equivalent. 
In other words, there is exactly one equivalence
  class of Neumann boundary spectral data on any given finite weighted graph with
  boundary, and any representative of that class
  is a choice of Neumann boundary spectral data.
  We mention that the Neumann boundary spectral data is related to other types of data on graphs, such as the Neumann-to-Dirichlet map (see \cite{KKL,KKLM} for the manifold case).
\end{remark}

\smallskip
Next, we define our \emph{a priori} data. In order to uniquely determine the graph structure,
not only do we need to
know the Neumann boundary spectral data, but some structures related to the boundary also need to be known. In essence, this extra knowledge is the number of interior points connected to the
boundary, and the edge structure between the boundary and its
neighbourhood.
\begin{definition} \label{aPrioriGeom}
  Let $\mathbb G, \mathbb G'$ be two finite graphs with boundary. 
  We say that $\mathbb G, \mathbb G'$ are
  \emph{boundary-isomorphic}, if there exists a bijection $\Phi_0 :
  N(\partial G) \to N(\partial G')$ with the following properties: 
  \begin{itemize}
  \item [(i)] $\Phi_0|_{\partial G}:\partial G\to\partial G'$ is bijective;  
 %  or equivalently $\Phi_0$ maps $G\cap N(\partial G)$ bijectively to $G'\cap N(\partial G')$.
  \item [(ii)] for any $z\in \partial G$, $y\in N(\partial G)$, we have $y\sim z$
    if and only if $\Phi_0(y) \sim' \Phi_0(z)$, where $\sim'$ denotes the edge relation of $\mathbb{G}'$.
  \end{itemize}  
We call $\Phi_0$ a \emph{boundary-isomorphism}.
\end{definition}

\begin{definition} \label{aPrioriSpec}
  Let $\mathbb G, \mathbb G'$ be two finite weighted graphs with boundary, and $q,q'$ be real-valued potential functions on $G, G'$.
  We say $(\mathbb{G},q)$ is \emph{spectrally
  isomorphic} to $(\mathbb{G}',q')$ (with a boundary-isomorphism $\Phi_0$),  
  if
  \begin{itemize}
  \item [(i)] there exists a boundary-isomorphism $\Phi_0:N(\partial G) \to
    N(\partial G')$;
  \item  [(ii)] the Neumann boundary spectral data of $(\mathbb{G},q)$ and $(\mathbb{G}',q')$ have the same number of eigenvalues counting multiplicities;
  \item   [(iii)]there exists a choice of Neumann boundary spectral data of $(\mathbb{G},q)$ and $(\mathbb{G}',q')$, such that $\lambda_j=\lambda_j'$
    and $\phi_j|_{\partial G} = \phi_j' \circ \Phi_0|_{\partial G}$
    for all $j$.
  \end{itemize}
\end{definition}

Note that if $(\mathbb{G},q)$ and $(\mathbb{G}',q')$ are assumed to be spectrally isomorphic, in particular to have the same number of Neumann eigenvalues, then $\mathbb{G}$ and $\mathbb{G}'$ necessarily have the same number of interior vertices. Moreover, the existence of a boundary-isomorphism from the definition (i) implies that the number of boundary vertices is also necessarily the same.

\smallskip
Now we state our main results, Theorems \ref{structureThm} and
\ref{coefficientThm}.

\begin{main} \label{structureThm}
  Let $\mathbb G=(G,\partial G, E,
  \mu, g), \mathbb G'=(G',\partial G', E',
  \mu', g')$ be two finite, strongly connected, weighted graphs with boundary satisfying
  \cref{assumption}. Let $q,q'$ be real-valued potential functions on $G, G'$. Suppose $(\mathbb{G},q)$ is spectrally
  isomorphic to $(\mathbb{G}',q')$ with a boundary-isomorphism $\Phi_0$.
  Then there exists a bijection $\Phi:G\cup \partial G \to G'\cup\partial G'$ such that
  \begin{enumerate}[(1)]
  \item $\Phi|_{\partial G} = \Phi_0|_{\partial G}\,$, 
  \item for any pair of vertices $x_1,x_2$ of $\mathbb G$, we have
    $x_1\sim x_2$ if and only if $\Phi(x_1) \sim' \Phi(x_2)$.
  \end{enumerate}  
%Furthermore, the bijection $\Phi$ can be directly constructed from the given Neumann boundary spectral data (\cref{phiDef}).
\end{main}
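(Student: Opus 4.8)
\medskip
\noindent\textbf{Proof strategy.}
The plan is to adapt the boundary control method to the discrete setting, extracting geometric information about the hidden part of the graph from the boundary spectral data. The engine is a discrete Blagoveshchenskii-type identity. Writing $L=-\Delta_G+q$ for the Neumann operator, note that $L$ is \emph{local}: $(Lu)(x)$ depends only on the values of $u$ at $x$ and its neighbours. Consequently, if $f:G\cup\partial G\to\R$ is supported in a metric ball $\mathcal{B}(z,r)=\{x:\,d(z,x)\le r\}$, then $Lf$ is supported in $\mathcal{B}(z,r+1)$; this is the discrete finite propagation speed. Since $\{\phi_j\}_{j=1}^N$ is a complete orthonormal family and the data gives $\lambda_j$ together with $\phi_j|_{\partial G}$, for any polynomials $p,p'$ and any boundary sources $a,a':\partial G\to\R$ the inner product $\langle p(L)u^{a},\,p'(L)u^{a'}\rangle_{L^2(G)}$ of the associated boundary-generated fields $u^a$ can be written as an explicit function of $(\lambda_j,\phi_j|_{\partial G})_j$, because the spectral projection of each such field onto $\phi_j$ is governed by $\phi_j|_{\partial G}$ and $\lambda_j$. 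Thus all these inner products are determined by the data, and the boundary-isomorphism $\Phi_0$ identifies the corresponding quantities of $\mathbb G$ and $\mathbb G'$.

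\medskip
With the wave inner products in hand, the next step is to pass from algebra to geometry. Combining finite propagation speed with an approximate controllability statement---that the boundary-generated fields associated with sources at $z$ and polynomials of degree $\le r$ span all functions supported in $\mathcal{B}(z,r)$, which uses strong connectedness and \cref{as1case2} of \cref{assumption} (cf.\ \cref{wavefront})---the computed inner products determine the $\mu$-measures of the domains of influence and of their finite intersections. In particular, for every boundary point $z$ and radius $r$ one recovers $\mu\big(\mathcal{B}(z,r)\big)$, and by inclusion--exclusion the measure of each set cut out by prescribing the distances to several boundary points. Equivalently, one recovers the \emph{boundary distance function} $r_x=\big(d(z,x)\big)_{z\in\partial G}$ of the interior vertices, together with the total $\mu$-measure of the vertices realizing each given distance vector.

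\medskip
The Two-Points Condition now turns this metric information into an identification of individual vertices. If two distinct interior vertices $x,y$ had equal boundary distance functions $r_x=r_y$, then applying \cref{as1case1} of \cref{assumption} to the two-element set $S=\{x,y\}$ would produce a boundary point $z$ for which one of them, say $x$, is the \emph{unique} nearest point of $S$, i.e.\ $d(z,x)<d(z,y)$---contradicting $r_x=r_y$. Hence $x\mapsto r_x$ is injective, so $R(G)=\{r_x:x\in G\}\subseteq\Z^{\partial G}$ is in canonical bijection with $G$, and each distance vector carries a single well-defined measure $\mu_x$. The same holds for $\mathbb G'$, and since $\Phi_0$ matches the boundary points and preserves distances through the recovered data, it transports distance vectors and yields $R(G)=R(G')$. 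Defining $\Phi$ to send the vertex of $\mathbb G$ with distance vector $r$ to the vertex of $\mathbb G'$ with the $\Phi_0$-transported distance vector (and $\Phi=\Phi_0$ on $N(\partial G)$) produces a bijection $G\cup\partial G\to G'\cup\partial G'$ with $\Phi|_{\partial G}=\Phi_0|_{\partial G}$, establishing conclusion~(1).

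\medskip
It remains to recover the edge set, conclusion~(2), which I expect to be the main obstacle: the boundary distance functions identify the vertices but not their adjacencies, and two vertices with nearly equal distance vectors need not be neighbours. To extract edges I would reconstruct the eigenfunctions fully. Using the nested family of domains of influence and a Gram--Schmidt procedure against the controllable fields, the wave inner products from the first step determine the value $\phi_j(x)$ of each eigenfunction at each reconstructed interior vertex, not merely on $\partial G$. Knowing all $\phi_j$ and $\lambda_j$ reconstructs the operator $L=\sum_j\lambda_j\,\phi_j\otimes\phi_j$ on $L^2(G)$ in the boundary-distance coordinates, and by the locality of $L$ one has $x\sim y$ (for $x\neq y$) precisely when the corresponding off-diagonal entry $-g_{xy}/\mu_x$ is nonzero. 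Since the reconstructed operators of $\mathbb G$ and $\mathbb G'$ agree after transport by $\Phi$, the adjacency patterns coincide, giving $x_1\sim x_2\iff\Phi(x_1)\sim'\Phi(x_2)$. The crux is therefore twofold: proving the discrete controllability that legitimizes the first two steps, and ensuring---via the symmetry-breaking guaranteed by the Two-Points Condition---that the passage from metric to operator data recovers edges and not merely vertices.
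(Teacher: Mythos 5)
Your proposal is the classical boundary control method, and its engine---the step you label ``approximate controllability''---is exactly where it breaks down. You assert that the fields generated by sources at a single boundary vertex $z$, with polynomials of degree at most $r$, span all functions supported in $\mathcal{B}(z,r)$, citing strong connectedness and \cref{as1case2} of \cref{assumption} (cf.\ \cref{wavefront}). But \cref{wavefront} only gives finite propagation speed and the signed, sharp first arrival of a wavefront under support constraints; it says nothing about spanning. In the continuous setting the spanning statement is precisely the consequence of Tataru's unique continuation theorem, and unique continuation fails for graph Laplacians---this is the difficulty flagged in the paper's introduction (nonzero eigenfunctions can vanish identically on part of a graph). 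Moreover, your claim is false even under \cref{assumption}: take a finite square lattice with $g\equiv 1$, $\mu\equiv 1$, $q=0$ and boundary along the sides (this satisfies the Two-Points Condition, see \cref{section-examples}), let $z$ be a boundary vertex lying on a mirror line of the lattice, and let $R$ be the reflection fixing $z$. By uniqueness of solutions, every field generated by a source at $z$ is $R$-symmetric, hence $L^2(G)$-orthogonal to every $R$-antisymmetric function; yet for $r\geqslant 2$ the set $\mathcal{B}(z,r)\cap G$ supports nonzero antisymmetric functions such as $\delta_x-\delta_{Rx}$. So the span of your controlled fields misses part of the ball, the projections you use to compute $\mu\big(\mathcal{B}(z,r)\big)$ and the inclusion--exclusion measures return wrong values, and both downstream steps---recovery of the boundary distance functions and the Gram--Schmidt reconstruction of $\phi_j(x)$ at interior vertices---collapse. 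The one step you do justify, injectivity of $x\mapsto r_x$ from the Two-Points Condition, is correct (it is the paper's \cref{TPA-resolving}), but it sits downstream of the unproved controllability.

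The paper's proof is built to avoid controllability altogether. Instead of spanning, it uses observation: for $s:\partial G\to\Z_+$ it considers the spaces $\mathcal{W}(s)$ of initial values whose waves vanish at each $z\in\partial G$ for $t<s(z)$, and shows---using only the one-sided information of \cref{wavefront} combined with the Two-Points Condition---that the boundary distance functions $r_x$ arise as maximal elements of the resulting partially ordered sets (\cref{maximal,maximalb}), and that initial values supported at a single interior vertex are singled out within this class by an orthogonality-plus-sign criterion (\cref{arrivaltime,samesign,procedure}). All of these characterizations are phrased in terms of quantities computable from the Neumann boundary spectral data (\cref{uRepresentation,Wsame,Asame,A0same}), which produces the vertex bijection $\Phi$; edges are then read off from two further boundary-computable criteria, $W_x(z)\neq 0$ for interior--boundary edges and $\min\{t\in\N : \langle u^{W_x}(\cdot,t),W_y\rangle_{L^2(G)}\neq 0\}=2$ for interior--interior edges (\cref{boundaryConnection,insideConnection})---close in spirit to your locality idea, but never requiring the values of $\phi_j$ at interior vertices. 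To salvage your route you would need a discrete local controllability theorem under \cref{assumption}, and the symmetry example above shows that no such theorem holds in the form you use; this obstruction is structural, not technical, and is the reason the paper takes a different path.
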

%the connected assumption can be dropped.

\begin{remark*}
  It may happen that $\Phi$ and $\Phi_0$ differ on $G\cap N(\partial G)$, for example if
  there exist points $y_1,y_2\in G\cap
  N(\partial G)$ which are connected to the same set of
  boundary vertices but connected to different parts in the interior.
\end{remark*}

\begin{main2} \label{coefficientThm}
  Take the assumptions of \cref{structureThm}, and identify vertices of $\mathbb{G}$ with vertices of $\mathbb G'$ via the bijection $\Phi$.
  Assume furthermore that $\mu_z=\mu'_z$,
  $g_{xz}=g'_{xz}$ for all $z\in\partial G$, $x\in G$, where $\mu',g'$ denote the weights of $\mathbb G'$. Then the following
  two conclusions hold.
  \begin{enumerate}[(1)]
  \item \label{coefCase1} If $\mu=\mu'$, then $g=g'$ and $q=q'$.
  
  \item \label{coefCase2} If $q=q'=0$, then $\mu=\mu'$ and $g=g'$.
  \end{enumerate}
  In particular, if $\mu={\rm deg}_{\mathbb{G}}$ and $\mu'={\rm deg}_{\mathbb{G}'}$, then $g=g'$ and $q=q'$.
\end{main2}

\subsection{Earlier results and related inverse problems}\label{subsec: earlier results}

Gel'fand's inverse  problem   \cite{Ge} for partial differential equations has been a paradigm problem in the study of the mathematical inverse problems and imaging problems arising from applied sciences. 
The combination of the boundary control method, pioneered by Belishev on domains of $\R^n$
and by Belishev and Kurylev on manifolds \cite{BelKur}, and the Tataru's unique continuation theorem \cite{Tataru1}  gave a solution to the inverse problem of determining the isometry type of a Riemannian manifold from given 
boundary spectral data. Generalizations and alternative methods to solve this problem have been studied e.g.\ in \cite{AKKLT,Bel-heat,BelKa,Caday,HLOS,KrKL,KOP,LassasOksanen}, see  additional references in \cite{Bel-review,KKL,L}.
 The inverse problems for the heat, wave and Schr\"odinger equations can be reduced 
 to Gel'fand's inverse  problem, see \cite{Bel-heat,KKL}. In fact, all these problems are 
 equivalent, see  \cite{KKLM}. Also, for the inverse problem for the wave equation with the measurement data  on a sufficiently large finite time interval, it is possible to continue the data to an infinite time interval, which makes 
 it possible to reduce the inverse problem to Gel'fand's inverse  problem, see \cite{KKL,KurLas}. %
 %It was shown in
 %\cite{KKLM}, see also \cite{KKL}, that  inverse boundary spectral problems are equivalent to inverse problems for the wave and heat equations in the time domain.
 The stability of the solutions of these inverse problems have been analyzed in \cite{AKKLT,BKL3,BILL,FIKLN,StU}. 
Numerical methods to solve Gel'fand's inverse  problems
have been studied in \cite{Bel-num,HoopOksanen1,HoopOksanen2}.
The inverse boundary spectral problems have been extensively studied also for  elliptic equations on bounded  domains of $\R^n$. In this setting, Gel'fand's problems
can be solved by reducing it, see \cite{NSU,Novikov}, to 
  Calder\'on's inverse  problems for elliptic
equations that were solved using complex geometrical optics, see \cite{SyUl}.
%and in the two dimensional case by Nachman \cite{Nachman2} and Astala and P\"aiv\"arinta \cite{Astala}.

%, see also \cite{ALP,DosSantos,Guillarmou,Isozaki,KenigSalo,KSU,Nachman1,U}.
%Numerical methods to solve these problems
%have been studied  in \cite{Isaacson1,Siltanen2,MS,Siltanen1}.

An intermediate model between discrete and continuous models is the quantum graphs, namely graphs equipped with differential operators defined on the edges. In this model, a graph is viewed as glued intervals, and the spectral data that are measured are usually the spectra of differential operators on edges subject to the Kirchhoff condition at vertices. For
such graphs, two problems have attracted much attention. In the case where one uses only the spectra of differential operators as data,  Yurko (\cite{Y05,Y09,Y10}) and other researchers (\cite{AK,BrownW,K08}) have developed so called spectral methods to solve inverse problems.
Due to the existence of isospectral trees, one spectrum is not enough to determine the operator and therefore multiple measurements are necessary. It is known in \cite{Y10} that the potential can be recovered from appropriate spectral measurements of the Sturm-Liouville operator on any finite graph.
An alternative setting is to  consider inverse problems for quantum graphs when one is given the eigenvalues of the differential operator and the values of the eigenfunctions at some of the nodes. 
Avdonin and Belishev and other researchers (\cite{AK,Avdonin2,Avdonin3,Avdonin4,B,BV}) have shown that it is possible to solve a type of inverse spectral problem for trees (graphs without cycles). 
%It is worth noting that cycles present challenges for this method, but it is possible to detect whether there are any from boundary data \cite{BW}. 
With this method, one can recover both the tree structures and differential operators. 
 
In this paper, we consider inverse problems in the purely discrete setting, that is, for the discrete graph Laplacian.
In this model, a graph is a discrete metric space with no differential structure on edges. The graph can be additionally assigned with weights on vertices and edges. The spectrum of the graph Laplacian on discrete graphs is an object of major interest in discrete mathematics (\cite{C,isospectralgraphs,S08}). It is well-known that the spectrum is closely related to geometric properties of graphs, such as the diameter (e.g. \cite{CFM94,CGY,ChungYau}) and the Cheeger constant (e.g. \cite{Cheeger,C05,F96}). There were inverse problems, especially the inverse scattering problem, considered on periodic graphs (e.g. \cite{A1,IK,KS}). However, due to the existence of isospectral graphs (\cite{isospectralgraphs,FK,Tan}), few results are known regarding the determination of the exact structure of a discrete graph from spectral data.

There have been several studies with the goal of determining the structure or weights of a discrete weighted graph from indirect measurements in the field of inverse problems. These studies mainly focused on the \emph{electrical impedance tomography on resistor networks} (\cite{Bor,CM00,LST}),
where electrical measurements are performed at a subset of vertices called the boundary. However, there are graph transformations which do not change the electrical data measured at the boundary, such as changing a triangle into a Y-junction, which makes it impossible to determine the exact structure of the inaccessible part of the network in this way. Instead, the focus was to determine the resistor values of given networks, or to find equivalence classes of networks (with unknown topology) that produce a given set of boundary data (\cite{CdV94,CdVGV96,CIM98}).

\begin{figure}
\centerline{
    \includegraphics[height=4cm]{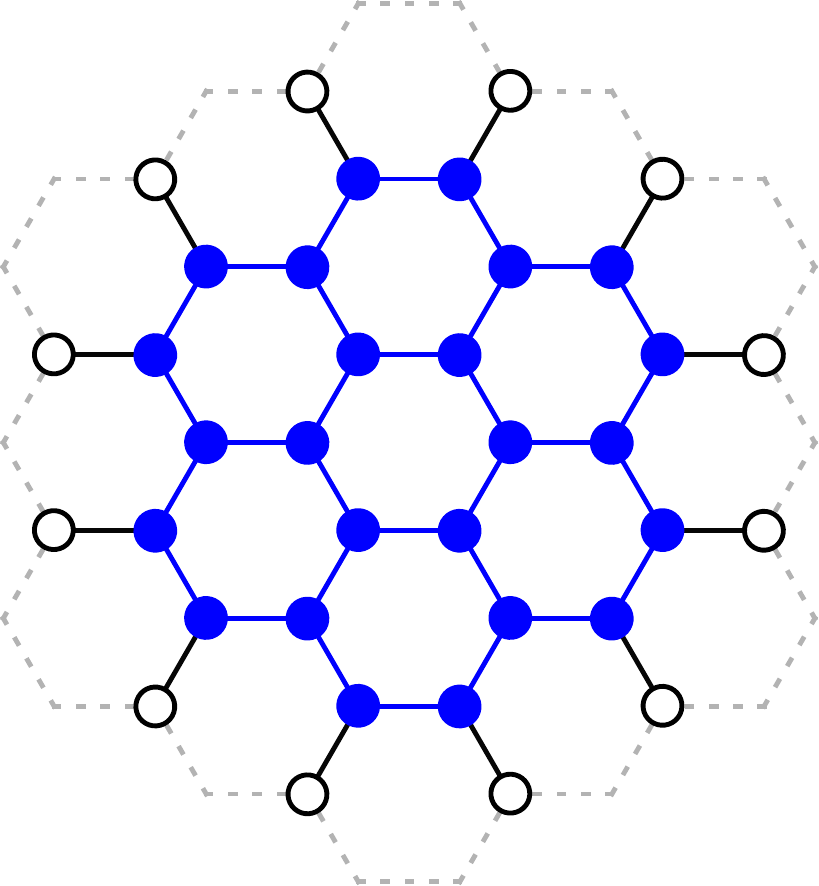}\hspace{3cm} 
        \includegraphics[height=4cm]{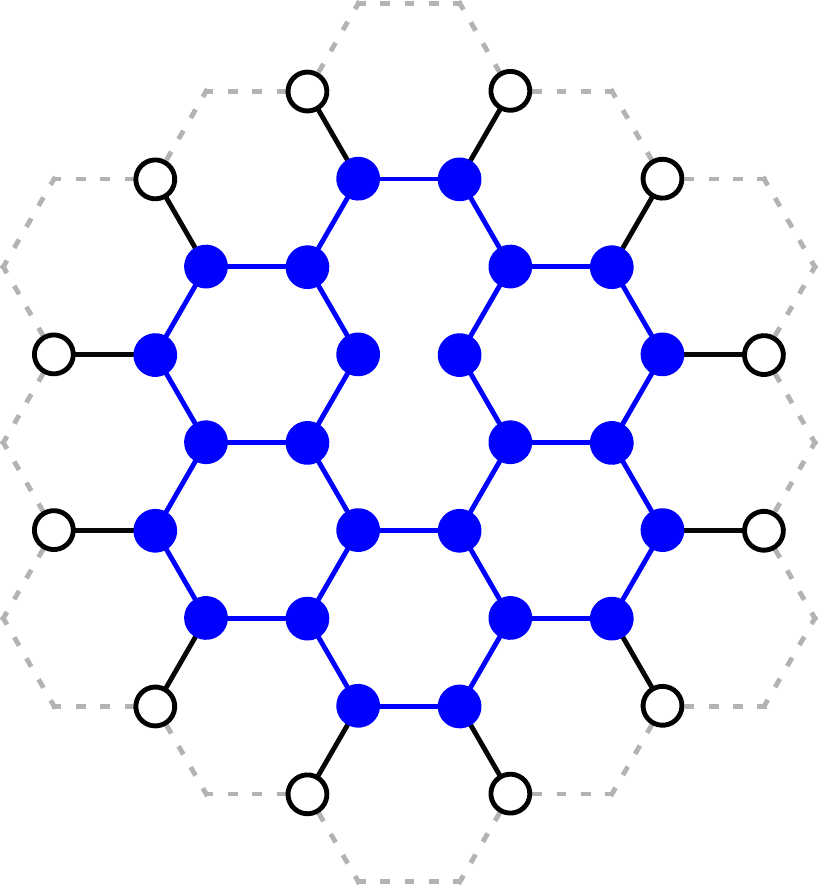}}
    \caption{{\bf Left:} Finite hexagonal lattice. The white vertices are considered to be the boundary vertices for the set of the blue (interior) vertices. {\bf Right:} A finite hexagonal lattice with one blue edge removed. 
Theorems \ref{structureThm} and
\ref{coefficientThm} show that the exact structure of such graphs can be uniquely recovered from the boundary spectral data.\vspace{-5mm}}
    \label{Finite hexagonal lattice}
\end{figure}

\begin{figure}
%\begin{minipage}[h]{0.45\linewidth}
%    \centering
\centerline{
    \includegraphics[height=4cm]{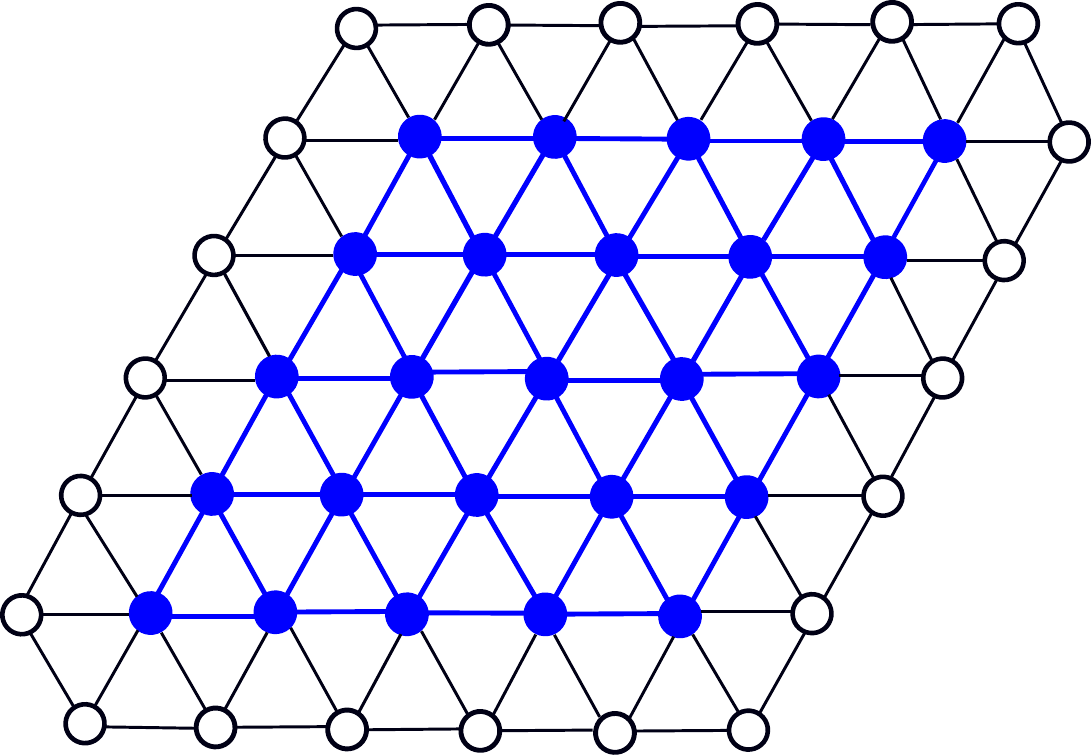}\hspace{1cm}   \includegraphics[height=4cm]{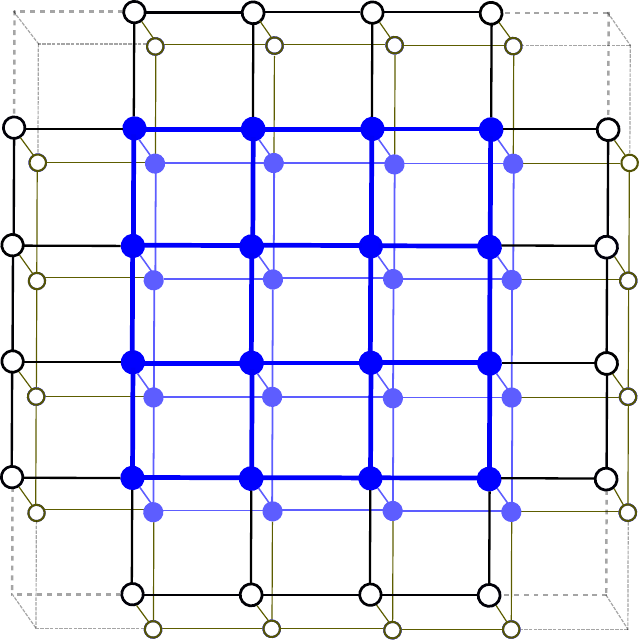}}
    \caption{{\bf Left:} Finite triangular lattice. The white vertices are the boundary vertices; the blue vertices are the interior vertices. {\bf Right:} Finite two-level square ladder, made out of two layers of square lattices.
    \vspace{-5mm}}
    \label{fig_ladder}
%\end{minipage}
\end{figure}

\subsection{Examples}\label{section-examples}

As primary examples, we consider several standard types of graphs satisfying the Two-Points Condition (\cref{as1case1} of \cref{assumption}).
%In this section, the graph can have edges between boundary points.

\begin{examples}
{\rm All finite trees satisfy the Two-Points Condition, with the boundary vertices being all vertices of degree $1$.}

{\rm This fact can be shown as follows. Recall that a tree is a connected graph containing no cycles. Take any subset $S$ of the interior vertices (i.e., vertices of degree at least $2$) of a tree with $|S|\geqslant 2$. If $|S|=2$, say $S=\{v_1,v_2\}$, then both vertices in $S$ are extreme points. This can be argued as follows. Take the (unique) shortest path between $v_1,v_2$, remove the edge incident to $v_1$ on this path, and one gets two disjoint subtrees. Consider the subtree containing $v_1$. One can see that any boundary vertex on this subtree realizes the extreme point condition for $v_1$. If $|S|>2$, pick two arbitrary points in $S$, say again $\{v_1,v_2\}$, then in the same way, consider the subtree containing $v_1$ after removing the edge incident to $v_1$ on the shortest path between $v_1,v_2$.
If this subtree does not intersect with $S-\{v_1\}$, then any boundary vertex on this subtree realizes the extreme point condition for $v_1$. Otherwise if the subtree intersects with $S-\{v_1\}$ at another vertex $v'_1\in S$, then consider further the subtree containing $v'_1$ by the same construction. Repeat this procedure until one finds a subtree that only intersects with $S$ at one vertex, and the procedure stops in finite steps since the graph is finite. Repeating this procedure from $v_2$ gives another extreme point, which verifies the Two-Points Condition.}
\end{examples}

For general cyclic graphs (i.e., graphs containing at least one cycle), it is often not easy to see if the Two-Points Condition is satisfied. The following proposition shows a concrete way to test for the Two-Points Condition.

\begin{proposition}\label{height}
For a finite graph with boundary $(G,\partial G,E)$, the Two-Points Condition follows from the existence of a function $h:G\cup\partial G\to \R$  satisfying the following conditions:
  \begin{enumerate}[(1)] \itemsep0em
  \item the Lipschitz constant of $h$ is bounded by $1$, i.e. if $x\sim y$ then $|h(x)-h(y)|\leqslant 1$;
    %Lipschitz w.r.t distance $d$.
  \item $|N_{\pm}(x)|=1$ for all $x\in G$, and $|N_{\pm}(z)|\leqslant
    1$ for all $z\in \partial G$, where
    \begin{eqnarray*}
    & &N_{+}(x)=\{y\in G\cup \p G:\ y\sim x \,,\, h(y)=h(x)+ 1\}, \\ 
    & &N_{-}(x)=\{y\in G\cup \p G:\ y\sim x \,,\, h(y)=h(x)- 1\}.
    \end{eqnarray*}
   \end{enumerate}
    %more precisely, $(x,y)\in E$. In the proof, $d$ is the distance w.r.t $E$.
     We call $N_+(x)$ the discrete gradient of $h$ at $x$, and $N_-(x)$ the discrete gradient of $-h$ at $x$.
\end{proposition}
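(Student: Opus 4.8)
The plan is to treat $h$ as a discrete height function and to locate extreme points of $S$ at the extremal heights of $S$, following the gradient fields $N_+$ and $N_-$ up or down to the boundary. First I would pick a point $x_0\in S$ at which $h$ attains its maximum over $S$, and build an ascending path $x_0,x_1,x_2,\dots$ by choosing $x_{i+1}\in N_+(x_i)$. Since $|N_+(x)|=1$ for every interior vertex, this path is uniquely determined and can always be continued as long as the current vertex lies in $G$; because $h(x_{i+1})=h(x_i)+1$ the heights strictly increase, so no vertex repeats and the path must terminate in the finite graph. The only vertices at which it can terminate are those with $N_+=\emptyset$, and these are necessarily boundary vertices (interior vertices have $|N_+|=1$), so the path ends at some $z\in\partial G$ with $h(z)=h(x_0)+k$, where $k$ is its length.

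Next I would identify $z$ as the boundary point witnessing that $x_0$ is an extreme point. The Lipschitz bound (1) gives, by telescoping along any path, that $d(z,y)\geqslant h(z)-h(y)$ for every $y$; in particular the ascending path realizes $d(z,x_0)=h(z)-h(x_0)=k$. Since $x_0$ maximizes $h$ on $S$, for every $y\in S$ we get $d(z,y)\geqslant h(z)-h(y)\geqslant h(z)-h(x_0)=d(z,x_0)$, so $x_0$ is a nearest point of $S$ to $z$.

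The crux is uniqueness, and this is the step I expect to be the main obstacle, since it is where hypothesis (2) is used in full and where one must rule out competing geodesics. Suppose $y\in S$ with $d(z,y)=k$. Then the inequality chain forces $h(y)=h(x_0)$ and, along any shortest path $z=u_0,u_1,\dots,u_k=y$, every step must decrease $h$ by exactly $1$, i.e. $u_{i+1}\in N_-(u_i)$. Here the condition $|N_-|\leqslant 1$ at every vertex is essential: it forces the descending path from $z$ to be unique, so both this shortest path and the reversed ascending path $z=x_k,x_{k-1},\dots,x_0$ are initial segments of the same descending sequence of length $k$, whence $y=u_k=x_0$. The bookkeeping amounts to showing that a geodesic from $z$ to a competing point is forced to be a gradient line, after which uniqueness follows from the at-most-one-descending-neighbour property.

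Finally I would run the symmetric argument with $-h$ in place of $h$, which also satisfies (1) and (2) with $N_+$ and $N_-$ interchanged, to produce an extreme point at a minimum of $h$ on $S$. If $h$ is non-constant on $S$, the maximizing and minimizing points differ, giving two extreme points. If $h$ is constant on $S$, then every point of $S$ is simultaneously a maximizer, so the maximum-height argument above shows that \emph{every} point of $S$ is extreme, and $|S|\geqslant 2$ yields at least two. Either way the Two-Points Condition holds.
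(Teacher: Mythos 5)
Your proof is correct and follows essentially the same strategy as the paper's: take a maximizer (and, symmetrically, a minimizer) of $h$ on $S$, follow the unique $N_+$-path to a boundary vertex $z$, and use the at-most-one-descending-neighbour property to force any competing geodesic from $z$ to be the reversed gradient line ending at $x_0$. Your minor refinements---observing via the Lipschitz bound that the ascending path is in fact distance-minimizing, and explicitly treating the case where $h$ is constant on $S$---tighten the same argument rather than constituting a different route.
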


\begin{proof}
For any $S\subseteq G$ with $|S|\geqslant 2$, take the points where the function $h$ achieves maximum and minimum in $S$. Let $x_0\in S$ be any maximal point. By condition (2), we can take the unique path, denoted by $\gamma_{x_0}$, starting from $x_0$ such that each step increases the function $h$ by $1$. This path $\gamma_{x_0}$ can only pass each point of the graph at most once, and therefore the path must end somewhere since the whole graph is finite. Let $z_0$ be the point where the path $\gamma_{x_0}$ ends. By construction we know $|N_{+}(z_0)|=0$, which indicates $z_0\in \partial G$ by condition (2). Observe that $h(z_0)-h(x_0)\geqslant d(x_0,z_0)$ as $\gamma_{x_0}$ may not be distance-minimizing.

We claim that $x_0$ is the unique nearest point in $S$ from $z_0$ (i.e. $x_0$ is an extreme point of $S$). Suppose not, and there exists another $x_1\in S,\,x_1\neq x_0$ such that $d(x_1,z_0)\leqslant d(x_0,z_0)$. Then condition (1) implies that 
\begin{equation}\label{4.2claim}
|h(x_1)-h(z_0)|\leqslant d(x_1,z_0)\leqslant d(x_0,z_0).
\end{equation}
We claim that the two equalities in (\ref{4.2claim}) cannot hold at the same time. Suppose both equalities are achieved. We take the shortest path from $z_0$ to $x_1$, and then the length of this path is equal to $d(x_0,z_0)$ by the second equality. The function $h$ can only change $d(x_0,z_0)$ times along the shortest path from $z_0$ to $x_1$, and hence every change must be decreasing by $1$ in order to make both equalities hold. However, there can only exist one such path starting from $z_0$ due to condition (2), which is exactly the backward direction from $z_0$ to $x_0$. Along this path, $x_0$ would be reached in exactly $d(x_0,z_0)$ steps and hence $x_1=x_0$. Hence the equalities in (\ref{4.2claim}) cannot hold at the same time, and we have $h(x_1)>h(z_0)-d(x_0,z_0)$. On the other hand, we already know $h(x_0)\leqslant h(z_0)-d(x_0,z_0)$ indicating $h(x_1)>h(x_0)$, which is a contradiction to the maximality of $x_0$. This shows $x_0$ is an extreme point of $S$. 

The same argument shows that any minimal point is also an extreme point of $S$. Therefore the Two-Points Condition follows from the condition $|S|\geqslant 2$.
\end{proof}

\begin{examples}
{\rm Finite square, hexagonal (Figure \ref{Finite hexagonal lattice}, Left), triangular (Figure \ref{fig_ladder}, Left), graphite and square ladder (Figure \ref{fig_ladder}, Right) lattices satisfy the Two-Points Condition with the set of boundary vertices being the domain boundary. 

We can apply Proposition \ref{height} to verify this. For the square, hexagonal, graphite and square ladder lattices, the function $h$ can be chosen as the standard height function with respect to a proper floor. Note that for these lattices, it suffices to choose only the floor and the ceiling as the boundary. For triangular lattices, the function $h$ can be constructed as a group action, such that $h$ changes by $1/2$ along the horizontal direction and changes by $1$ along one of the other directions.}
\end{examples}

%\begin{figure}[h]
%  \begin{center}
%    \includegraphics[width=5cm]{hexagonal_lattice_finite_edge_removed}
%    \caption{A finite hexagonal lattice with one blue edge removed from Figure \ref{Finite hexagonal lattice}. As discussed in Example 3, one can remove any horizontal edges  from
% the  finite hexagonal lattice and the obtained graph satisfies the 
% Two-Points Condition. As long as the obtained graph stays strongly connected,
%Theorems \ref{structureThm} and
%\ref{coefficientThm} show that the exact structure of such graphs can be uniquely recovered from the boundary spectral data (in the class of graphs satisfying \cref{assumption}).}
%    \label{fig_hex_removed}
%  \end{center}
%\end{figure}

\begin{examples}
{\rm In the finite square, hexagonal, triangular, graphite and square ladder lattices, any horizontal edges can be removed 
and the obtained graphs still  satisfy the Two-Points Condition.
Here, the horizontal edges refer to the edges in the non-gradient directions with respect to the function $h$. See Figure \ref{Finite hexagonal lattice} (Right). This is because removing such edges does not affect the conditions for the function $h$ in Proposition \ref{height}. 

A finite square lattice with an interior vertex and all its edges removed also satisfies the Two-Points Condition. Essentially, one can repeat the proof of Proposition \ref{height} to show this particular situation. However, it is necessary to choose all four sides as the boundary and use two different choices of the function $h$. (Intuitively speaking, removing one small square does not affect the ability to find maximal and minimal points in at least two directions.) More generally, removing one square of any size from a finite square lattice does not affect the Two-Points Condition.}
\end{examples}

\begin{examples}
{\rm Assume that a function $h:G\cup\partial G\to \R$ satisfies  the conditions  (1,\,2) in Proposition \ref{height} for a finite graph with boundary $(G,\partial G,E)$. Then one can add to
the graph $(G,\partial G,E)$ additional edges $\{x,y\}$ that connect any two vertices $x,y\in G\cup\partial G$ satisfying $|h(x)-h(y)|<1$.
Similarly, one can remove from the graph any edges $\{x,y\}$ that connect vertices $x,y\in G\cup\partial G$ satisfying $|h(x)-h(y)|<1$.
 The obtained graph
and the function $h$  satisfy  the conditions  (1,\,2) in Proposition \ref{height}, and hence the  Two-Points Condition. This procedure can
 be used,  for example, to add additional horizontal edges in the finite hexagonal lattice in Figure \ref{Finite hexagonal lattice}.}
\end{examples}

\begin{examples}
{\rm Graphs that satisfy the conditions given in Proposition \ref{height} can be connected together so that these conditions stay valid.
To do this, assume that real-valued functions $h_1$ and $h_2$, defined on disjoint finite graphs with boundary 
$(G_1,\partial G_1,E_1)$ 
and
$(G_2,\partial G_2,E_2)$,
satisfy the conditions  (1,\,2) in Proposition \ref{height}, respectively. Moreover, assume that  there are $c\in \R$ and 
ordered sets $X_1=\{x_1^1,x_2^1,\dots,x_k^1\}\subseteq G_1\cup\partial G_1$ and 
$X_2=\{x_1^2,x_2^2,\dots,x_k^2\}\subseteq G_2\cup\partial G_2$, such that $|h_2(x^2_j)+c-h_1(x^1_j)|<1$ for all $j=1,2,\dots,k$. (In particular, such sets and $c$ always exist for $k=1$.)
Then we consider $(G,\partial G,E)$ for $G=G_1\cup G_2$, $\p G=\partial G_1\cup \partial G_2$ and $E=E_1\cup E_2\cup E_{12}$, where 
 $E_{12}=\big\{ \{x^1_j,x^2_j\}:\ j=1,2,\dots, k \big\}$. 
  We define a function $h$ on $G\cup\partial G$ by
  \begin{equation} \label{wSupport1}
h(x)=\begin{cases}
    h_1(x), &\hbox{for } x\in G_1\cup\partial G_1,\\
   h_2(x)+c,&\hbox{for } x\in G_2\cup\partial G_2.    \end{cases}
  \end{equation}
Then this function $h$ satisfies  the conditions (1,\,2) in Proposition \ref{height} for $(G,\partial G,E)$, and therefore the graph $(G,\partial G,E)$
satisfies  the Two-Points Condition. Figure \ref{fig_ladder} is a special case of this example.}
\end{examples}

However, the Two-Points Condition does not hold (without declaring more boundary vertices) if one adds an additional vertex and connects it to any interior vertex with an additional edge. This is because the set of the two endpoints of that additional edge violates the Two-Points Condition. 
Let us also mention an example where
the Two-Points Condition is not satisfied: the Kagome lattice.

\medskip
This paper is organized as follows. We introduce relevant definitions and basic facts in Section \ref{section:pre}. In Section \ref{section:wave}, we define the discrete wave equation and study the wavefront propagation. Section \ref{sectionspectral} is devoted to proving our main results.

\medskip
\noindent
{\bf Acknowledgement.}
We thank the anonymous referee for thoroughly reading our paper and many valuable comments.
H.I. was partially supported by Grant-in-Aid for Scientific Research (C)
20K03667 Japan
Society for the promotion of Science. H.I. is indebted to JSPS.
E.B., M.L.\ and J.L.\ were partially supported by Academy of Finland, grants 273979, 284715, 312110.

\section{Preliminaries}
\label{section:pre}

In this section, let $\mathbb G = (G,\partial G, E,\mu, g)$ be a finite weighted graph with boundary. First, we derive an elementary but important Green's formula.

\begin{lemma}[Green's formula]\label{Green}
  For two functions $u_1,u_2 : G\cup\partial G \to \R$, we have
  \[
  \sum_{x\in G}\mu_x \big( u_1(x) \Delta_G u_2(x) - u_2(x) \Delta_G
  u_1(x) \big) = \sum_{z\in\partial G} \mu_z \big( u_2(z) \partial_\nu
  u_1(z) - u_1(z) \partial_\nu u_2(z) \big).
  \]
\end{lemma}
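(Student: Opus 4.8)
The plan is to prove this by discrete summation by parts, the graph analogue of Green's second identity. The starting point is to expand the left-hand side directly using the definition \eqref{Laplaciandef} of $\Delta_G$. Multiplying through by $\mu_x$ cancels the normalization, so that
\[
\sum_{x\in G}\mu_x\big(u_1(x)\Delta_G u_2(x) - u_2(x)\Delta_G u_1(x)\big) = \sum_{x\in G}\sum_{\substack{y\sim x\\ y\in G\cup\partial G}} g_{xy}\big[u_1(x)(u_2(y)-u_2(x)) - u_2(x)(u_1(y)-u_1(x))\big].
\]
The first key observation is that the diagonal terms $-u_1(x)u_2(x)$ and $+u_2(x)u_1(x)$ cancel inside the bracket, leaving the antisymmetric kernel $F(x,y) := g_{xy}\big(u_1(x)u_2(y) - u_2(x)u_1(y)\big)$, which satisfies $F(y,x) = -F(x,y)$ because $g_{xy}=g_{yx}$. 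Thus the left-hand side equals $\sum_{x\in G}\sum_{y\sim x} F(x,y)$, a sum of $F$ over all ordered pairs $(x,y)$ with $x\in G$ and $y$ an arbitrary neighbour in $G\cup\partial G$.

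Next I would split this double sum according to whether the neighbour $y$ lies in $G$ or in $\partial G$ (edges between two boundary vertices never enter, since the outer index $x$ is always interior). The contribution from interior-interior edges vanishes: the index set of ordered pairs $(x,y)$ with $x,y\in G$ and $x\sim y$ is symmetric under swapping the two entries, so it pairs each term $F(x,y)$ with $F(y,x) = -F(x,y)$, and these cancel edge by edge. Only the interior-boundary contribution survives, namely $\sum_{x\in G}\sum_{\substack{z\sim x\\ z\in\partial G}} g_{xz}\big(u_1(x)u_2(z) - u_2(x)u_1(z)\big)$.

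Finally I would expand the right-hand side in the same way using the definition \eqref{Neumanndef} of $\partial_\nu$. After multiplying by $\mu_z$ and cancelling the diagonal terms $\mp u_1(z)u_2(z)$, the right-hand side becomes $\sum_{z\in\partial G}\sum_{\substack{x\sim z\\ x\in G}} g_{xz}\big(u_1(x)u_2(z) - u_2(x)u_1(z)\big)$, which is exactly the surviving interior-boundary sum after reindexing the pair $(x,z)$; the two expressions then agree term by term. The only care needed is the bookkeeping of ordered versus unordered pairs and making precise that the interior edge set is symmetric under swapping endpoints; beyond organizing these double sums cleanly there is no serious obstacle, as the computation is purely algebraic.
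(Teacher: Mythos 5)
Your proposal is correct and follows essentially the same route as the paper's proof: both expand the left-hand side via \eqref{Laplaciandef}, cancel the interior--interior contributions using the symmetry $g_{xy}=g_{yx}$ of the edge weights (your antisymmetric-kernel pairing is the paper's index-swapping argument in different clothing), and then identify the surviving interior--boundary sum with the expansion of \eqref{Neumanndef} after the diagonal products cancel. The only cosmetic difference is that the paper verifies this last identification by the algebraic identity $u_1(x)u_2(y)-u_2(x)u_1(y)=u_2(y)\big(u_1(x)-u_1(y)\big)+u_1(y)\big(u_2(y)-u_2(x)\big)$, whereas you expand the Neumann side and cancel; these are the same computation read in opposite directions.
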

\begin{proof}
By definition (\ref{Laplaciandef}),
\begin{eqnarray*}
\sum_{x\in G} \mu_x u_1(x)\Delta_G u_2(x) &=& \sum_{x\in G} \sum_{\substack{y\sim x \\ y\in G\cup\partial G}} g_{xy}u_1(x) \big(u_2(y)-u_2(x) \big) \\
&=& \sum_{x\in G} \big( \sum_{y\sim x,y\in G}+ \sum_{y\sim x,y\in \partial G} \big) g_{xy} u_1(x) \big(u_2(y)-u_2(x) \big).
\end{eqnarray*}
Observe that the indices and summations can be switched in the following way:
\begin{eqnarray*}
\sum_{x\in G} \sum_{y\sim x,y\in G} g_{xy} u_1(x)u_2(y)=\sum_{y\in G} \sum_{x\sim y,x\in G} g_{yx}u_1(y)u_2(x)=\sum_{x\sim y,x\in G}\sum_{y\in G} g_{yx}u_1(y)u_2(x).
\end{eqnarray*}
Hence the summation over $x,y\in G$ cancels out, and we get
\begin{eqnarray*}
\sum_{x\in G} \mu_x \big(u_1(x)\Delta_G u_2(x)- u_2(x)\Delta_G u_1(x)\big) =\sum_{x\in G}\sum_{y\sim x,y\in \partial G} g_{xy}\big(u_1(x)u_2(y)-u_2(x)u_1(y)\big),
\end{eqnarray*}
where we have used the fact that the weights are symmetric: $g_{xy}=g_{yx}$. Then the lemma follows from (\ref{Neumanndef}) and the following identity:
$$u_1(x)u_2(y)-u_2(x)u_1(y)=u_2(y)\big(u_1(x)-u_1(y)\big)+u_1(y) \big(u_2(y)-u_2(x)\big).$$
\vspace*{-10mm}

\end{proof}

Next we consider the boundary distance functions and the closely related
resolving sets of a graph, see \cite{resolving1,resolving2} and their generalizations in \cite{Hakanen2}. 

\begin{definition}\label{resolving}
  Let $(G,\partial G,E)$ be a finite connected graph with boundary. We say $\partial G=\{z_i\}_{i=0}^{m-1}$ is \emph{a resolving set} for $(G,\partial G,E)$,
  if the boundary distance coordinate 
  $$\big(d(\cdot,z_0), d(\cdot,z_1), \cdots, d(\cdot,z_{m-1})\big):G\to \mathbb{R}^m$$ 
  is injective, where $d$ denotes the distance on $(G\cup\partial G,E)$.

  For any point $x\in G$, we denote by $r_x$ the \emph{boundary distance 
  function}
  \begin{equation} \label{bndryDistFunc}
    r_x:\partial G \to \mathbb R, \qquad r_x(z)=d(x,z).
  \end{equation}
  The set of boundary distance functions of a graph $\mathbb G$ is
  denoted by $\mathcal{R}(\mathbb G)$. If $\partial G$ is a resolving
  set, the map $x\mapsto r_x$ from $G$ to $\mathcal{R}(\mathbb G)$ is
  a bijection.
\end{definition}

The minimal cardinality of resolving sets for a graph is called the metric dimension of the graph \cite{resolving1}.
The boundary distance functions are extensively used in the study of inverse problems on manifolds, see e.g.\ \cite{HoopS,Ivanov,KKL,LSaksala,PS}.
 
The concept of resolving sets gives a rough idea on how to choose boundary points such that the inverse problem may be solvable. If the chosen boundary points do not form a resolving set, then there is little hope to solve the inverse problem from spectral data measured at those points. 

\begin{lemma}\label{TPA-resolving}
The Two-Points Condition (\cref{as1case1} of
\cref{assumption}) implies that $\partial G$ is a resolving set
for $(G,\partial G,E)$.
%and also for $(G,\partial G,E_{re})$
%The existence of one extreme point is enough for this.
\end{lemma}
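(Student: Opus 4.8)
The plan is to argue by contradiction, reducing the injectivity of the boundary distance coordinate map to a direct application of the Two-Points Condition to a two-element set. Suppose $\partial G$ is \emph{not} a resolving set. By Definition \ref{resolving}, the map $x \mapsto \big(d(x,z_0), \ldots, d(x,z_{m-1})\big)$ then fails to be injective, so there exist two distinct interior vertices $x,y\in G$, $x\neq y$, with $d(x,z)=d(y,z)$ for every $z\in\partial G$.

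Next I would apply the Two-Points Condition (\cref{as1case1} of \cref{assumption}) to the set $S=\{x,y\}$, which has cardinality $2\geqslant 2$. The condition guarantees at least two extreme points of $S$ with respect to $\partial G$; since $S$ contains only the points $x$ and $y$, both of them must be extreme. In particular $x$ is an extreme point of $S$, so by Definition \ref{extreme} there exists a boundary vertex $z\in\partial G$ such that $x$ is the \emph{unique} nearest point in $S$ from $z$.

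Because $S=\{x,y\}$ has exactly two elements, the uniqueness of $x$ as the nearest point from $z$ means precisely that $d(x,z)<d(y,z)$, a strict inequality. This contradicts the standing assumption that $d(x,z)=d(y,z)$ for all boundary points $z$. Hence no such pair $x\neq y$ can exist, the boundary distance coordinate map is injective, and $\partial G$ is therefore a resolving set for $(G,\partial G,E)$.

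I do not expect any genuine obstacle in this argument: its entire content lies in the observation that, for a two-point set, a point being \emph{extreme} is equivalent to that point being strictly closer to some boundary vertex than the other, which is exactly what separates their boundary distance coordinates. The only minor subtlety worth flagging is that the Two-Points Condition supplies strictly more than is needed here — two extreme points — whereas a single extreme point of $\{x,y\}$ already furnishes the separating boundary vertex; the second extreme point plays no role in this particular reduction.
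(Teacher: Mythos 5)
Your proof is correct and follows exactly the same route as the paper's: assume $\partial G$ is not resolving, extract two interior points $x\neq y$ with identical boundary distance functions, and observe that $S=\{x,y\}$ violates the Two-Points Condition since an extreme point of $S$ would have a strictly smaller distance to some boundary vertex. The paper states this contradiction in one line without unpacking it; your version simply spells out the same argument in full detail.
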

\begin{proof}
Suppose that $\partial G$ is not a
resolving set for $(G,\partial G,E)$. Then by \cref{resolving}, there exist two points
$x_1,x_2\in G$ such that $d(x_1,z)=d(x_2,z)$ for all $z\in \partial
G$. However, the set $S=\{x_1,x_2\}$ is a contradiction to the Two-Points
Condition for $(G,\partial G,E)$. 
%The same argument shows that $\partial G$ is also a resolving set for $(G,\partial G,E_{re})$ by virtue of \cref{reduced}.
\end{proof}

We point out that the Neumann spectral data for the equation (\ref{eigenvalueproblem}) are not affected at all by edges between boundary points, since the Neumann boundary value (\ref{Neumanndef}) only counts edges from boundary points to interior points. In other words, the edges between boundary points are invisible to our Neumann spectral data. However, this limitation does not matter to us since the structure of the boundary is \emph{a priori} given. What we will reconstruct in the next few sections is actually the reduced graph of $\mathbb{G}$, which is defined as follows.
\begin{definition}[Reduced graph]\label{reduceddef}
  Let $\mathbb G = (G,\partial G,E,\mu,g)$ be a weighted graph with boundary. 
  The \emph{reduced graph} of $\mathbb{G}$ is defined as 
  $\mathbb G_{re} = (G,\partial G,E_{re},\mu,g|_{E_{re}} ),$
    where
  \[
  E_{re} = E - \big\{ \{x,y\} \in E \,\big|\, x\in\partial G \text{ and }
  y\in\partial G \big\}.
  \]
A graph with boundary being strongly connected is equivalent to its reduced graph being connected.
Note that $\mathbb{G}$
and $\mathbb G_{re}$ have identical Neumann spectral data due to the definition of the Neumann boundary value (\ref{Neumanndef}).
\end{definition}

Reducing a graph affects distances as paths along edges between boundary points become
forbidden. In the same way as \cref{distance}, the distance $d_{re}(x,y)$
on the reduced graph $(G\cup \partial G,E_{re})$ is defined through
paths of $(G\cup\partial G,E_{re})$ from $x$ to $y$, instead of along
paths of the original graph $(G\cup\partial G, E)$. Then clearly $d_{re}(x,y) \geqslant d(x,y)$ for any $x,y\in G\cup\partial G$. 
The change of distances also affects the $r$-neighbourhood
$N_{re}(x,r)$ of $x\in G\cup\partial G$, which is defined by
\[
N_{re}(x,r)=\big\{y\in G\cup\partial G\,\big|\, d_{re}(y,x)\leqslant r \big\}.
\]
%if the reduced
%graph is connected. If the reduced graph is not connected, it may happen that
%$d_{re}(x,y) = \infty$ for points $x,y\in G$ in
%different connected components of the reduced graph. 

However, reducing a graph does not affect the Two-Points Condition.

\begin{lemma}\label{reduced}
  If $x_0\in S$ is an extreme point of a subset $S\subseteq G$ realized by
  some $z\in \partial G$, then there exists $z_0\in \partial G$ also
  realizing the extreme point condition of $x_0$ such that none of the
  shortest paths from $x_0$ to $z_0$ pass through any other boundary
  point.
  
  As a consequence, if $(G,\partial G,E)$ satisfies the Two-Points
  Condition (\cref{as1case1} of \cref{assumption}), then so does
  $(G,\partial G,E_{re})$ with respect to its distance function
  $d_{re}$.
\end{lemma}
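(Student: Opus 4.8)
The plan is to prove the first assertion directly, and then deduce the consequence about the Two-Points Condition as an immediate corollary. The key observation is that passing from the original distance $d$ to the reduced distance $d_{re}$ can only make distances larger, and crucially, it leaves the distance between any two points \emph{unchanged} whenever some shortest path between them already avoids boundary-to-boundary edges. So the heart of the matter is to show that an extreme point $x_0\in S$, realized by some $z\in\partial G$, is \emph{also} realized by a boundary point $z_0$ reachable from $x_0$ via a shortest path that never uses an edge between two boundary points.

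First I would fix an extreme point $x_0\in S$ with witness $z\in\partial G$, so that $x_0$ is the unique nearest point of $S$ to $z$ in the metric $d$. I would then take a shortest path $\gamma$ from $x_0$ to $z$ and let $z_0$ be the \emph{first} boundary point encountered along $\gamma$ when travelling from $x_0$; that is, $z_0$ is the boundary vertex on $\gamma$ closest to $x_0$, so that the initial segment of $\gamma$ from $x_0$ to $z_0$ meets $\partial G$ only at its endpoint $z_0$. Since $\gamma$ is a shortest path, its initial segment is a shortest path from $x_0$ to $z_0$, and by construction this segment passes through no boundary point other than $z_0$. The main work is then to verify that $z_0$ still realizes the extreme point condition for $x_0$, i.e. that $x_0$ is the unique nearest point of $S$ to $z_0$.

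To see this, I would argue by contradiction: suppose some $x_1\in S$ with $x_1\neq x_0$ satisfies $d(x_1,z_0)\leqslant d(x_0,z_0)$. Concatenating a shortest path from $x_1$ to $z_0$ with the segment of $\gamma$ from $z_0$ to $z$ yields a path from $x_1$ to $z$ of length $d(x_1,z_0)+d(z_0,z)\leqslant d(x_0,z_0)+d(z_0,z)=d(x_0,z)$, where the last equality uses that $z_0$ lies on the shortest path $\gamma$ from $x_0$ to $z$. Thus $d(x_1,z)\leqslant d(x_0,z)$, contradicting the fact that $x_0$ is the \emph{unique} nearest point of $S$ to $z$. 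Hence no such $x_1$ exists, and $x_0$ is indeed the unique nearest point of $S$ to $z_0$, so $z_0$ realizes the extreme point condition. The anticipated main obstacle is a bookkeeping subtlety rather than a conceptual one: I must be careful that the segment of $\gamma$ from $x_0$ to $z_0$ is a valid path in the reduced graph, which holds precisely because it contains no edge joining two boundary vertices (its only boundary vertex is $z_0$); consequently $d_{re}(x_0,z_0)=d(x_0,z_0)$, and the same concatenation argument shows $x_0$ remains the unique nearest point of $S$ to $z_0$ in the metric $d_{re}$.

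For the consequence, suppose $(G,\partial G,E)$ satisfies the Two-Points Condition and let $S\subseteq G$ have $|S|\geqslant 2$. The original condition supplies two distinct extreme points $x_0,x_0'\in S$ of $S$ with respect to $d$; applying the first part to each produces witnesses $z_0,z_0'\in\partial G$ whose shortest paths to $x_0,x_0'$ avoid other boundary points, so that these distances are preserved under reduction. Since distances to points in $S$ can only increase under reduction while the witnessing distances $d_{re}(x_0,z_0)=d(x_0,z_0)$ and $d_{re}(x_0',z_0')=d(x_0',z_0')$ are preserved, the same contradiction argument as above (now run in $d_{re}$) shows that $x_0$ and $x_0'$ remain extreme points of $S$ with respect to $d_{re}$. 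This exhibits two extreme points for every such $S$, verifying the Two-Points Condition for $(G,\partial G,E_{re})$.
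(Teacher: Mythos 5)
Your concatenation argument showing that $z_0$ realizes the extreme point condition for $x_0$ is exactly the paper's own first step, and your proof of the \emph{consequence} (the Two-Points Condition passing to the reduced graph) is correct. However, your construction does not prove the first assertion of the lemma as stated. The lemma asserts a universal property: \emph{none} of the shortest paths from $x_0$ to $z_0$ passes through any other boundary point. Your choice of $z_0$ (the first boundary vertex along one fixed shortest path $\gamma$) only yields the existential property: \emph{some} shortest path from $x_0$ to $z_0$, namely the initial segment of $\gamma$, avoids other boundary points; nothing rules out a second shortest path of the same length through a different boundary vertex. Concretely, suppose $x_0,y_1,y_2,y_3\in G$, $z_0,z'\in\partial G$, with edges $x_0\sim y_1$, $y_1\sim y_2$, $y_2\sim z_0$ and $x_0\sim z'$, $z'\sim y_3$, $y_3\sim z_0$. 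If the witness is $z=z_0$ and $\gamma=(x_0,y_1,y_2,z_0)$, your construction returns $z_0$ itself, yet the equally short path $(x_0,z',y_3,z_0)$ passes through the boundary point $z'$, so the stated conclusion fails for this $z_0$.

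The gap is repairable, and the repair is what the paper does: among \emph{all} boundary points realizing the extreme point condition of $x_0$ (a nonempty set, since it contains $z$), choose $z_0$ at \emph{minimal} distance from $x_0$. If some shortest path from $x_0$ to this $z_0$ passed through another boundary point $z''$, then your own concatenation argument, applied with $z_0$ in the role of $z$, shows that $z''$ also realizes the extreme point condition, while $d(x_0,z'')<d(x_0,z_0)$, contradicting minimality. Equivalently, you could iterate your construction (replace the current witness by any earlier boundary point lying on one of its shortest paths; the distance to $x_0$ strictly decreases, so the process terminates). Note that for the paper's actual application, namely reducing to $\mathbb G = \mathbb G_{re}$ at the start of the inverse-problem section, your weaker existential statement already suffices: it gives $d_{re}(x_0,z_0)=d(x_0,z_0)$, which combined with the monotonicity $d_{re}\geqslant d$ yields the consequence exactly as you argue, and indeed this is also all that the paper's own proof of the consequence uses.
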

\begin{proof}
  If any shortest path from $x_0$ to $z$ passes through another
  boundary point $z^{\prime}\in \partial G$, then $x_0$ is an extreme
  point also realized at $z^{\prime}$. Then we consider the set of all
  the boundary points with respect to which $x_0$ is an extreme point,
  and take a point $z_0$ (not necessarily unique) in this set with the
  minimal distance from $x_0$. It follows that $z_0$ is the desired
  boundary point; otherwise there is another boundary point in the set
  with a smaller distance from $x_0$.

  Let $x_0\in S$ be an extreme point of $S$ with respect to
  $\partial G$ realized by $z_0\in\partial G$. By the argument above, we may assume that none of the shortest paths from $x_0$
  to $z_0$ pass through any boundary point except for $z_0$. Reducing
  the graph will not affect this path or its length. On the other
  hand, no distances between points may decrease in the
  reduction. So this path is still the shortest path between $S$ and
  $z_0$ in the reduced graph. Hence $x_0$ is also an extreme point
  of $S$ with respect to $\partial G$ in the reduced graph.
\end{proof}

\section{Wave Equation}
\label{section:wave}

\begin{definition}[Time derivatives]
  For a function $u: G\times \mathbb{N}\to \mathbb{R}$, we define the discrete first and second time derivatives at $(x,t_0)$ by
  \ba
& &  D_t u(x,t_0)=u(x,t_0+1)-u(x,t_0), \qquad t_0 \geqslant 0,
  \\
& &  
  D_{tt} (x,t_0)=u(x,t_0+1)-2u(x,t_0)+u(x,t_0-1), \qquad t_0 \geqslant 1.
  \ea
  These are sometimes called the forward difference and the
  second-order central difference in time.
\end{definition}

We consider the following initial value problem for the discrete wave
equation with the Neumann boundary condition:
\begin{equation}\label{waveequation}
    \begin{cases}
    D_{tt}  u(x,t)-\Delta_G u(x,t)+q(x)u(x,t)=0, &x\in G,\, t\geqslant 1,\\
      \partial_{\nu} u(x,t)=0, &x\in\partial G,\, t\geqslant 0,\\
      D_t u(x,0) = 0, &x\in G,\\
      u(x,0) = W(x), &x\in G\cup \partial G,
    \end{cases}
\end{equation}
where the values of $u$ on $\partial G$ are uniquely determined by
the values on $G$ via the Neumann boundary condition at each time step. More precisely, using the definition of the Neumann boundary value (\ref{Neumanndef}) gives
\begin{equation} \label{uValuesOnBndry}
  u(z) = \sum_{\substack{x\sim z\\x\in G}} g_{xz} u(x) \Big/
  \sum_{\substack{x\sim z\\x\in G}} g_{xz}, \qquad z\in\partial G.
\end{equation}
We require $\partial_{\nu}W|_{\partial G}=0$ for the compatibility of
the initial value and Neumann boundary condition. The initial
conditions and the boundary condition imply that $u(x,1)=u(x,0)=W(x)$
for all $x\in G\cup\partial G$.

\begin{definition}[Waves]
  Given $W : G \cup \partial G \to \R$ satisfying $\partial_\nu W|_{\partial G} = 0$, denote by $u^W:(G\cup\partial G)\times
  \mathbb{N}\to \mathbb{R}$ the solution of the discrete wave equation
  \eqref{waveequation} with the initial condition
  $u(\cdot,0)=W(\cdot)$ on $G\cup\partial G$. The function $W$ is
  called the \emph{initial value}. In this paper, a \emph{wave} refers
  to a solution of the wave equation \eqref{waveequation}.
\end{definition}

\begin{lemma}
Given any initial value $W : G \cup \partial G \to \R$ satisfying $\partial_\nu W|_{\partial G} = 0$,
the discrete wave equation \eqref{waveequation} has a unique solution.
\end{lemma}
\begin{proof}
The discrete wave equation is solved in the following way.
The solution on $G\cup\partial G$ at times $t=0$
and $t=1$ are determined by the initial conditions. Afterwards, the value on $G$ at time $t\geqslant 2$ is
calculated from the value on $(G\cup\partial G) \times \{t-1\}$ and on
$G\times\{t-2\}$ by the equation $D_{tt}  u - \Delta_G u + qu=0$.
Then the formula \eqref{uValuesOnBndry} gives the value on $\partial G$ at time
$t$.
\end{proof}

\smallskip
The main purpose of this section is to prove a wavefront lemma which will be used frequently in
the next section. Item~\ref{as1case2} of \cref{assumption} is
essential for the wavefront lemma, as the wave propagation may ``speed
up'' due to the instantaneous effect of the boundary condition if a
shortest path goes through the boundary. Under \cref{as1case2} of \cref{assumption}, distances of the reduced graph
are realized by avoiding boundary points, which is essential to guarantee proper wave behaviour.
\begin{lemma} \label{avoidBndry}
  Let $\mathbb G$ be a finite connected graph with boundary satisfying
  \cref{as1case2} of \cref{assumption}.
  Suppose the reduced graph of $\mathbb G$ is connected. 
  Let $x\in G$ and $z\in\partial
  G$. If $x\sim z$, then $d_{re}(x,p) \leqslant d_{re}(z,p)$ for any $p\in
  G\cup\partial G - \{z\}$.
\end{lemma}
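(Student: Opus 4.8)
The plan is to reduce the statement to a simple path-surgery argument: I will show that any path in the reduced graph from $z$ to $p$ can be converted into a path from $x$ to $p$ of no greater length, which immediately yields $d_{re}(x,p)\leqslant d_{re}(z,p)$. Since the reduced graph of $\mathbb G$ is assumed connected, $d_{re}(z,p)$ is finite, so I may fix a shortest path $z=v_0,v_1,\ldots,v_J=p$ in $(G\cup\partial G,E_{re})$ with $J=d_{re}(z,p)$. Because $p\neq z$, we have $J\geqslant 1$ and the vertex $v_1$ exists.

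The key observation is that $v_1$ must be an interior vertex. Indeed, the edge $\{z,v_1\}$ lies in $E_{re}$ and $z\in\partial G$; since every edge between two boundary vertices is deleted in passing to the reduced graph (Definition \ref{reduceddef}), $v_1$ cannot be a boundary vertex, so $v_1\in G$. Now both $x$ and $v_1$ are interior vertices adjacent to $z$, so \cref{as1case2} of \cref{assumption} applies and gives the dichotomy: either $x=v_1$, or $x\sim v_1$.

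I would then treat the two cases. If $x=v_1$, the truncated sequence $x=v_1,v_2,\ldots,v_J=p$ is a path in the reduced graph of length $J-1$, so $d_{re}(x,p)\leqslant J-1<d_{re}(z,p)$. If instead $x\sim v_1$, then the edge $\{x,v_1\}$ joins two interior vertices and is therefore retained in $E_{re}$; prepending it produces the path $x,v_1,v_2,\ldots,v_J=p$ in the reduced graph, of length $1+(J-1)=J$, whence $d_{re}(x,p)\leqslant J=d_{re}(z,p)$. Either way the desired inequality holds.

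The argument is short, and the only point requiring care is bookkeeping about which edges survive the reduction: the first step $z\to v_1$ of a shortest path is forced to land in $G$ precisely because boundary-to-boundary edges are forbidden in $\mathbb G_{re}$, and this is exactly what makes \cref{as1case2} of \cref{assumption} applicable and lets the new edge $\{x,v_1\}$ be reused in the reduced graph. I expect no serious obstacle beyond recording these details; the hypothesis $x\sim z$ with $x\in G$ guarantees $x$ is a genuine interior neighbour of $z$, which is all the dichotomy needs.
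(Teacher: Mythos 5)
Your proof is correct and follows essentially the same route as the paper: the paper also takes the first vertex $x'$ on a shortest path from $z$ to $p$ (your $v_1$), notes it must be interior because boundary-to-boundary edges are absent from $E_{re}$, applies \cref{as1case2} of \cref{assumption} to get $x=x'$ or $x\sim x'$, and concludes by the triangle inequality in $d_{re}$. Your explicit remark that the edge $\{x,v_1\}$ survives the reduction (being interior-to-interior) is a detail the paper leaves implicit, but the arguments are the same.
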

\begin{proof}
  Let $x'\in G$ be a point such
  that $x'\sim z$ and $d_{re}(x',p) = d_{re}(z,p)-1$. 
  This point exists since distances are realised by paths in a connected graph.
  Such a point $x'$
  cannot be in $\partial G$, because there are no edges between
  boundary points in the reduced graph.
  We have $x,x'\in G$ and $x\sim z$, $x'\sim z$. Then by
  \cref{as1case2} of \cref{assumption}, we have $x\sim x'$ if $x\neq x'$. Hence the
  triangle inequality yields that $d_{re}(x,p) \leqslant d_{re}(x,x') + d_{re}(x',p) =
  d_{re}(z,p)$. If $x=x'$, then $d_{re}(x,p)=d_{re}(z,p)-1<d_{re}(z,p)$.
\end{proof}

Recall that the Neumann boundary value
\eqref{Neumanndef} does not take into account the edges between
boundary points. This means that waves cannot propagate from one
boundary point to another without going through the interior.
Hence the wavefront propagates by the distance function
$d_{re}$ of the reduced graph, instead of the distance
function of the original graph.
\begin{lemma}[Wavefront]\label{wavefront}
  Let $\mathbb G$ be a finite connected weighted graph with boundary satisfying
  \cref{as1case2} of \cref{assumption}.
  Suppose the reduced graph of $\mathbb G$ is connected. Let $x\in G,\, z\in\partial G$ and $t_0=d_{re}(x,z)$.
  Suppose $W:G\cup\partial G\to\R$ is an initial value satisfying
  $\partial_{\nu} W|_{\partial G}=0,\,W(z)=0$ and
  \begin{equation} \label{wSupportInX}
    \big\{y\in N_{re}(z,t_0)\cap G \;\big|\; W(y)\neq 0 \big\}\subseteq \{x\}.
  \end{equation}
  Then the following properties hold for the wave $u^W$ generated by $W$:
  \begin{enumerate}[\quad(1)]
  \item If $W(x)> 0$, then $t_0\geqslant 2$, $u^W(z,t_0)> 0$ and
    $u^W(z,t)= 0$ for all $t<t_0$;
  \item If $W(x)=0$, then $u^W(z,t)=0$ for all $t\leqslant t_0$.
  \end{enumerate}
\end{lemma}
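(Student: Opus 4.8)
The plan is to combine a finite-propagation-speed estimate with a positivity-of-first-arrival estimate, both organized as inductions on the reduced distance $d_{re}(x,\cdot)$. First I would record the mechanism that keeps the propagation speed finite. Although $\Delta_G u(p,t)$ at an interior point $p$ involves the values $u(z',t)$ at boundary neighbours $z'$ of $p$, these boundary values are determined by interior values through \eqref{uValuesOnBndry}. Using \cref{as1case2} of \cref{assumption}, every interior neighbour $w$ of such a $z'$ satisfies $w=p$ or $w\sim p$ (since $p\sim z'$ and $w\sim z'$ force $p\sim w$), hence $d_{re}(w,p)\leqslant 1$. Substituting \eqref{uValuesOnBndry} into \eqref{Laplaciandef} thus rewrites $\Delta_G u(p,t)$ as a linear combination of values $u(w,t)$ over interior $w$ with $d_{re}(w,p)\leqslant 1$. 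Feeding this into \eqref{waveequation} and using $u^W(\cdot,0)=u^W(\cdot,1)=W$, an induction on $t$ yields the \emph{finite propagation speed}: for interior $p$ and $t\geqslant 1$, the value $u^W(p,t)$ depends only on $\{W(w):w\in G,\ d_{re}(w,p)\leqslant t-1\}$; in particular $u^W(p,t)=0$ whenever $W$ vanishes on that set.

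Next I would cash this in for the vanishing statements and for $t_0\geqslant 2$. Any interior neighbour $w$ of $z$ has $d_{re}(x,w)\geqslant d_{re}(x,z)-1=t_0-1$, and for $t\leqslant t_0$ the set $\{w'\in G:d_{re}(w',w)\leqslant t-1\}$ lies inside $N_{re}(z,t_0)\cap G$, where by \eqref{wSupportInX} the only possibly nonzero value of $W$ is at $x$. For $t\leqslant t_0-1$ the point $x$ is not in this set, so $u^W(w,t)=0$, and averaging by \eqref{uValuesOnBndry} gives $u^W(z,t)=0$ for all $t<t_0$; if moreover $W(x)=0$ then $W\equiv 0$ on $N_{re}(z,t_0)\cap G$ and the same averaging gives $u^W(z,t_0)=0$, which is conclusion (2). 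To rule out $t_0=1$ when $W(x)>0$: then $x\sim z$, and the Neumann condition $\partial_\nu W(z)=0$ together with $W(z)=0$ forces $\sum_{w\sim z,\,w\in G} g_{wz}W(w)=0$; since $x$ is the only interior neighbour of $z$ carrying nonzero $W$ and $g_{xz}>0$, this gives $W(x)=0$, a contradiction, so $t_0\geqslant 2$.

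It then remains to prove $u^W(z,t_0)>0$. By finite propagation and linearity, $u^W(z,t_0)$ equals $W(x)$ times the same quantity for the unit source $W|_G=\mathbf 1_{\{x\}}$, so I would prove positivity of the first arrival for this source by induction on $r=d_{re}(x,\cdot)$: for interior $p$ that $u^W(p,r+1)>0$, and for boundary $z$ that $u^W(z,r)>0$. For interior $p$ with $r\geqslant 1$ one has $u^W(p,r)=u^W(p,r-1)=0$ (by finite propagation and the initial data), so \eqref{waveequation} collapses to $u^W(p,r+1)=\Delta_G u^W(p,r)=\mu_p^{-1}\sum_{y\sim p} g_{py}\,u^W(y,r)$. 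Each interior neighbour contributes $u^W(y,r)\geqslant 0$, being either $0$ (when $d_{re}(x,y)\geqslant r$) or a positive first arrival (when $d_{re}(x,y)=r-1$, by the inductive hypothesis). Each boundary neighbour $z'$ contributes $u^W(z',r)\geqslant 0$ as well: by \cref{as1case2} its interior neighbours $w$ all satisfy $d_{re}(w,p)\leqslant 1$, so its value \eqref{uValuesOnBndry} is a nonnegative weighted average of terms $u^W(w,r)$ that are again either $0$ or positive first arrivals.

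Finally I must exhibit one strictly positive contribution, and this is where I expect the genuine obstacle to lie. Taking a reduced geodesic from $x$ to $p$, its predecessor $y_*$ of $p$ satisfies $d_{re}(x,y_*)=r-1$ and $y_*\sim p$; I claim $y_*\in G$. Indeed, if $y_*\in\partial G$, then since the reduced graph has no boundary–boundary edges its own geodesic predecessor $y'$ is interior with $y'\sim y_*$, and \cref{avoidBndry} applied to the pair $y'\sim y_*$ yields $d_{re}(y',p)\leqslant d_{re}(y_*,p)=1$, contradicting $d_{re}(y',p)=2$. Hence $y_*\in G$, so $u^W(y_*,r)$ is a positive first arrival and $u^W(p,r+1)>0$. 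The same geodesic argument supplies, for a boundary point $z$ at distance $r$, an interior predecessor $w_*$ with $d_{re}(x,w_*)=r-1$, whence the average \eqref{uValuesOnBndry} over its interior neighbours is strictly positive, giving $u^W(z,r)>0$. Applying this with $r=t_0$ and rescaling by $W(x)>0$ proves $u^W(z,t_0)>0$, completing (1). The delicate points, both resolved by the assumptions in force, are that \emph{all} neighbour contributions stay nonnegative — guaranteed by \cref{as1case2}, which keeps boundary values as local averages — and that the geodesic predecessor can be taken interior — guaranteed by \cref{avoidBndry} — so that a single strictly positive term survives with no cancellation.
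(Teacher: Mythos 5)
Your proof is correct, and it reaches the conclusion by a genuinely different organization than the paper's. The paper works with the given $W$ throughout: it first pins down the sign pattern of $W$ near $z$ (its \eqref{wSupport2}), then runs a single induction on time $\tau=1,\dots,t_0-1$ whose claims are indexed by distance to the \emph{target} $z$ --- vanishing on the ball $d_{re}(\cdot,z)\leqslant t_0-\tau$ and nonnegativity on the sphere $d_{re}(\cdot,z)=t_0-\tau+1$ (its \eqref{indClaim1}), together with one strictly positive point pushed along a geodesic toward $z$ (its \eqref{indClaim2}) --- and finishes with a separate computation at time $t_0$. You instead (i) isolate an explicit finite-propagation-speed (domain-of-dependence) statement, whose proof uses \cref{as1case2} of \cref{assumption} in exactly the way the paper packages into \cref{avoidBndry}; (ii) read off all the vanishing claims and $t_0\geqslant 2$ from it; (iii) use linearity and the dependence domain to replace $W$ by the unit source at $x$, discarding whatever $W$ does outside $N_{re}(z,t_0)$; and (iv) run the positivity induction outward from the \emph{source}, proving that for the unit source the first arrival is positive at every vertex. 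What your route buys: the sign bookkeeping is pinned to first-arrival times of a one-point nonnegative source, so you never need to track the sign of the general wave on spheres around $z$; you get a stronger intermediate statement (positivity at every vertex at its arrival time, rather than one tracked point per time step); and your final step is tighter, since it needs only one strictly positive interior neighbour of $z$ among nonnegative ones --- note that the paper's last paragraph asserts $u^W(x,t_0)>0$ for \emph{every} interior neighbour $x$ of $z$, which its \eqref{indClaim2} (a single positive point at distance $2$ from $z$, not necessarily adjacent to each such $x$) does not quite justify, whereas the ``one positive among nonnegatives'' version you prove is all that \eqref{zValue} requires. What the paper's route buys: no reduction step, no need to formalize linearity or a dependence-domain lemma, and both conclusions of the lemma fall out of one induction. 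Both arguments ultimately rest on the same two structural facts: \cref{as1case2} keeps the boundary relay local, and geodesic predecessors of boundary points can be taken interior.
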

\begin{proof}
  Let us prove the first claim of the lemma with $W(x)>0$. To start with, we show that
  $t_0=d_{re}(x,z)\geqslant2$. Suppose $d_{re}(x,z)=1$. Let
  $x,y_1,\ldots,y_J \in G$ be the interior points connected to $z\in\partial G$. The
  boundary conditions $W(z)=0$ and $\partial_\nu W(z)=0$ imply that
  \[
  \sum_{\substack{y\sim z\\y\in G}} g_{yz}W(y) = 0.
  \]
  On the other hand, \eqref{wSupportInX} implies that $W(y_j)=0$ for
  $j=1,\ldots,J$. Hence the equation above reduces to $g_{xz}W(x)=0$,
  which is a contradiction as $g$ is defined to be positive. Hence
  $d_{re}(x,z)\geqslant 2$.

%  As a consequence, the fact that $t_0\geqslant 2$ yields $W(z)=0$ due to the Neumann boundary condition.
  Let $ z'\in\partial G$, $z'\neq z$. If
  $d_{re}(z',z) \leqslant t_0-1$, then for any $x'\in G$ with $x'\sim z'$,
  \cref{avoidBndry} implies that $d_{re}(x',z) \leqslant d_{re}(z',z) \leqslant
  t_0-1$, and hence $W(x')=0$ by \eqref{wSupportInX}. 
  Since this holds for all the interior points connected to $z'$, the
  Neumann boundary condition yields that $W(z')=0$. 
%  all of its interior neighbours are in $N_{re}(z,1)\cap G$, and since
%  $t_0\geq 2$ we have $N_{re}(z,1) \subseteq N_{re}(z,t_0)$ and so $W=0$
%  on them, thus $W(z')=W(z)=0$. 
  On the other hand, if $d_{re}(z',z) =
  t_0$, then for any $x'\in G$ with $x'\sim z'$,
  \cref{avoidBndry} implies that $d_{re}(x',z)\leqslant t_0$, and hence
  $W(x')\geqslant 0$.
  Note that $W(x')$ may be nonzero in this case since possibly $x'=x$.
   Then the Neumann boundary
  condition gives $W(z')\geqslant 0$. Combining these observations, for any $p\in G \cup \partial G$, we have
  \begin{equation} \label{wSupport2}
    W(p)\,\begin{cases}
    = 0, & p\in G\cup\partial G,\, d_{re}(p,z) \leqslant t_0-1,\\
    \geqslant 0, & p\in G\cup\partial G,\, d_{re}(p,z) = t_0,\\
    > 0, & p=x.
    \end{cases}
  \end{equation}
By the
  initial conditions of the wave equation \eqref{waveequation}, $u^W(p,1)=u^W(p,0)=W(p)$ for all $p\in G
  \cup \partial G$.
Hence \eqref{wSupport2} gives the wavefront behavior of the wave $u^W(\cdot,t)$ at time $t=0,1$.
  
\smallskip
To study the wavefront behavior of the wave $u^W(\cdot,t)$ at $t\geqslant 2$, it is convenient to use an induction formulation where \eqref{wSupport2} serves as the base case.  
The formulation is as follows: prove the following statement, by induction on $\tau = 1,2,\ldots, t_0-1$, that for any $p\in G \cup \partial G$,
  \begin{equation} \label{indClaim1}
    u^W(p,\tau)\, \begin{cases}
      = 0, &p\in G\cup\partial G,\, d_{re}(p,z)\leqslant t_0-\tau,\\
      \geqslant 0, &p \in G\cup\partial G,\, d_{re}(p,z) = t_0-\tau+1,
    \end{cases}
  \end{equation}
   and that
  \begin{equation} \label{indClaim2}
    u^W(x_\tau,\tau) > 0,
  \end{equation}
  for some $x_\tau \in G$ satisfying $d_{re}(x_\tau,z) = t_0-\tau+1$. For $\tau =1$, the claims \eqref{indClaim1} and \eqref{indClaim2} reduce to \eqref{wSupport2}
  by choosing
  $x_\tau=x$. This verifies the initial conditions for the induction. Assume that \eqref{indClaim1} and \eqref{indClaim2} hold
  for some $\tau\in\{1,2,\ldots,t_0-2\}$, we need to prove that \eqref{indClaim1} and \eqref{indClaim2} hold for $\tau+1$. We will spend most of the proof to argue this. Once \eqref{indClaim1} and \eqref{indClaim2} are proved, we will show in the end that the lemma can be proved from the $\tau=t_0-1$ case.
  
  \smallskip
  By the wave equation \eqref{waveequation}, we have
  \begin{equation} \label{indUpdate}
    u^W(p,\tau+1) = 2u^W(p,\tau) - u^W(p,\tau-1) + \Delta_G u^W(p,\tau) - q(p)
    u^W(p,\tau),
  \end{equation}
  when $p\in G$ and $\tau\geqslant 1$. This formula and the Neumann boundary condition are what the induction is based on. 
  First, we prove that \eqref{indClaim1} holds for $\tau+1$.

  Let $p\in G$ satisfying $d_{re}(p,z) \leqslant t_0-(\tau+1)$. Then we see that the terms
  $2u^W(p,\tau), u^W(p,\tau-1)$ and $q(p)u^W(p,\tau)$ in \eqref{indUpdate} are all equal to
  zero by the induction assumption. Moreover, since $u^W(p,\tau)=0$, we have
  \[
  \Delta_Gu^W(p,\tau)= \frac{1}{\mu_p} \sum_{\substack{y\sim p\\y\in
      G\cup\partial G}} g_{py} u^W(y,\tau).
  \]
  Let $y\in G\cup\partial G$
  be any point connected to $p$. Then $d_{re}(y,z) \leqslant d_{re}(y,p) +
  d_{re}(p,z) \leqslant  1 + t_0 - (\tau+1) = t_0-\tau$, and hence $u^W(y,\tau)=0$
  by the induction assumption. Thus \eqref{indUpdate} shows that
  $u^W(p,\tau+1) = 0$ for all $p\in G$, $d_{re}(p,z)\leqslant
  t_0-(\tau+1)$.

  On the other hand, if $p\in G$ satisfying $d_{re}(p,z)=t_0-(\tau+1)+1 =
  t_0-\tau$, then for the same reason as above, we see that
  \[
  u^W(p,\tau+1) = \Delta_G u^W(p,\tau)= \frac{1}{\mu_p}
  \sum_{\substack{y\sim p\\y\in G\cup\partial G}} g_{py} u^W(y,\tau).
  \]
  If $y\in G\cup\partial G$ satisfies $y\sim p$, then $d_{re}(y,z) \leqslant
  d_{re}(y,p) + d_{re}(p,z) = 1 + t_0 - \tau$, and hence $u^W(y,\tau) \geqslant 0$ by the induction assumption. Thus
  $u^W(p,\tau+1)\geqslant 0$. It remains to consider the case of $p\in\partial
  G$, and find $x_{\tau+1}$ for \eqref{indClaim2}.

  Let $p\in\partial G$. Instead of using \eqref{indUpdate} which is valid
  only in the interior, we can determine the sign of
  $u^W(p,\tau+1)$ by using the Neumann boundary condition $\partial_\nu
  u^W(p,\tau+1)=0$. Namely,
  \begin{equation} \label{indUpdateNeumann}
    u^W(p,\tau+1) = \sum_{\substack{y\sim p\\y\in G}} g_{yp}
    u^W(y,\tau+1) \Big/ {\sum_{\substack{y\sim p\\y\in G}} 
      g_{yp}}.
  \end{equation}
  Suppose $p\neq z$ and $d_{re}(p,z)\leqslant t_0-(\tau+1)$. 
  Any interior point $y$ with $y\sim p$ satisfies that
  $d_{re}(y,z) \leqslant d_{re}(p,z) \leqslant t_0 - (\tau+1)$ by
  \cref{avoidBndry}. Since we have already showed that $u^W(y,\tau+1) = 0$ for
  any $x\in G$ satisfying $d_{re}(y,z)\leqslant t_0-(\tau+1)$, it follows from \eqref{indUpdateNeumann} that
  $u^W(p,\tau+1)=0$. In the case of $p=z$, for any interior point $y$ adjacent to $z$ (i.e., $d_{re}(y,z)=1$),
  we have
  $u^W(y,\tau+1)=0$ if $d_{re}(y,z)=1\leqslant t_0-(\tau+1)$ is satisfied. This is applicable to all our induction steps since $\tau\leqslant t_0-2$, and therefore we have $u^W(z,\tau+1)=0$ due to \eqref{indUpdateNeumann}.

  For the second line in \eqref{indClaim1}, 
  let $p\in \partial G$ satisfying $d_{re}(p,z)= t_0-(\tau+1)+1 = t_0-\tau$. 
  In particular $p\neq z$ since $\tau<t_0$. 
  As in the previous case, we
  see that any interior point $x$ with $x\sim p$ satisfies $d_{re}(x,z)
  \leqslant d_{re}(p,z) = t_0-\tau$. Since we have already showed that
  $u^W(x,\tau+1)\geqslant 0$ for such $x$, we get $u^W(p,\tau+1)\geqslant 0$. 
  This concludes the proof of \eqref{indClaim1} by induction.

\smallskip
  Next, we prove that \eqref{indClaim2} holds for $\tau+1$. The
  induction assumption gives that $u^W(p,\tau)\geqslant 0$ for any $p\in
  G\cup\partial G$ satisfying $d_{re}(p,z)=t_0-\tau+1$. Moreover,
  there exists one
  such $p\in G$, denoted by $x_\tau$, so that $u^W(x_\tau,\tau)>0$. Let
  $\gamma$ be a shortest path of length $t_0-\tau+1$ from $x_\tau$ to $z$ in
  the reduced graph. Since $\tau\leqslant t_0-2$, this path is at least of
  length $3$. Let $x_{\tau+1}$ be the second vertex along this path,
  and then $d_{re}(x_{\tau+1},z) = t_0-\tau\geqslant 2$. Observe that $x_{\tau+1}$ is also an
  interior point: if not, then \cref{avoidBndry} implies that
  $d_{re}(x_\tau,z) \leqslant d_{re}(x_{\tau+1},z) = t_0-\tau$ as $x_\tau \sim x_{\tau+1}$, contradiction.
  %if not, then since $x_\tau \sim x_{\tau+1}$ and
  %$x_{\tau+1}\in\partial G$, \cref{avoidBndry} implies that
  %$d_{re}(x_\tau,z) \leq d_{re}(x_{\tau+1},z) = t_0-\tau$ which is a
  %contradition. 
  
  To prove \eqref{indClaim2}, it remains to prove that
  $u^W(x_{\tau+1},\tau+1)>0$. We consider the formula \eqref{indUpdate} with
  $p=x_{\tau+1}$. The induction assumption for \eqref{indClaim1} shows
  that $u^W(x_{\tau+1},\tau)$, $u^W(x_{\tau+1},\tau-1)$ and
  $q(x_{\tau+1}) u^W(x_{\tau+1},\tau)$ are all equal to zero, since
  $d_{re}(x_{\tau+1},z) \leqslant t_0-\tau$. Thus by \eqref{indUpdate},
  \[
  u^W(x_{\tau+1},\tau+1) = \Delta_G u^W(x_{\tau+1},\tau)=
  \frac{1}{\mu_{x_{\tau+1}}} \sum_{\substack{y\sim x_{\tau+1}\\y\in
      G\cup\partial G}} g_{yx_{\tau+1}} u^W(y,\tau).
  \]
  For a point $y\in G\cup\partial G$ connected to $x_{\tau+1}$, we have $d_{re}(y,z) \leqslant t_0 -
  \tau + 1$, and therefore the induction assumption for \eqref{indClaim1} gives
  $u^W(y,\tau)\geqslant 0$. Notice that one of the points $y$ in the sum above is
  $x_\tau$, for which $u^W(x_\tau,\tau)>0$. Hence the whole sum is
  positive. This concludes the proof of 
  \eqref{indClaim2} by induction.

  \medskip
  Now we turn to the statement of the lemma, with \eqref{indClaim1} and \eqref{indClaim2} in hand.
  We see that $u^W(z,t)=0$ for all $t<t_0$ by \eqref{indClaim1}. At
  time $t_0$, the Neumann boundary condition gives
  \begin{equation} \label{zValue}
    u^W(z,t_0) = \sum_{\substack{y\sim z\\y\in G}} g_{yz} u^W(y,t_0)
    \Big/ {\sum_{\substack{y\sim z\\y\in G}} g_{yz}}.
  \end{equation}
  Let $y\in G$ be an arbitrary point satisfying $y\sim z$. The formula \eqref{indUpdate} gives
  \[
  u^W(y,t_0) = 2u^W(y,t_0-1) - u^W(y,t_0-2) + \Delta_G u^W(y,t_0-1) -
  q(y) u^W(y,t_0-1).
  \]
  Since $u^W(y,t_0-1)=u^W(y,t_0-2)=0$ by
  \eqref{indClaim1}, we have
  \[
  \Delta_G u^W(y,t_0-1) = \frac{1}{\mu_y} \sum_{\substack{p\sim
      y\\p\in G\cup\partial G}} g_{yp} u^W(p,t_0-1).
  \]
  Note that $d_{re}(p,z) \leqslant d_{re}(p,y) + d_{re}(y,z) = 1+1 = 2$. If
  $d_{re}(p,z)=0$, then $p=z$ and 
  $u^W(p,t_0-1)=u^W(z,t_0-1)=0$. If $d_{re}(p,z)=1$, then
  $u^W(p,t_0-1)=0$ by the first line of \eqref{indClaim1}. If
  $d_{re}(p,z)=2$, then $u^W(p,t_0-1)\geqslant 0$ by the second line of \eqref{indClaim1}.
Hence from the equations above, we see that $\Delta_G
  u^W(y,t_0-1)\geqslant 0$ and consequently $u^W(y,t_0)\geqslant 0$ for any $y\in G$ satisfying $y\sim z$.  
  Furthermore, there exists a point $\hat{p}\in G$ with $d_{re}(\hat{p},z)=2$ such that $u^W(\hat{p},t_0-1)>0$ by
  \eqref{indClaim2}. 
The condition $d_{re}(\hat{p},z)=2$ indicates that there exists a point $\hat{y}\in G$ such that $\hat{y}\sim z$ and $\hat{y}\sim \hat{p}$.  
  Hence from the same equations above, we see that $\Delta_G
  u^W(\hat{y},t_0-1)>0$ and consequently $u^W(\hat{y},t_0)>0$.
Combining these with \eqref{zValue} yields that $u^W(z,t_0)>0$.

  \smallskip
  The second claim of the lemma with $W(x)=0$ simply follows from the same proof as above
  but without the need for \eqref{indClaim2}, and by replacing
  instances of $W\geqslant 0, W>0$ with $W=0$ and those of $u^W\geqslant 0, u^W>0$
  with $u^W=0$.
\end{proof}

\section{The Inverse Spectral Problem}\label{sectionspectral}

In this section, we reconstruct the graph structure and the potential from the Neumann
boundary spectral data, and prove \cref{structureThm,coefficientThm}. Since the structure of the boundary is \emph{a priori} given, it suffices to reconstruct the reduced graph $\mathbb{G}_{re}$ (recall
\cref{reduceddef}). The assumption that $\mathbb{G}$ is strongly connected is equivalent to $\mathbb{G}_{re}$ being connected. Due to \cref{reduced} and the fact that removing edges between
boundary points does not affect the boundary spectral data, without loss of generality, \emph{we assume $\mathbb G = \mathbb
G_{re}$ throughout this section}. In other words, we assume that there are no
edges between boundary points in $\mathbb G$. 
%Note that this reduction requires $\mathbb{G}_{re}$ to be connected. We remark that the connectedness of $\mathbb{G}_{re}$ is not essential, as the reconstruction procedure in this section can be done to each connected component of $\mathbb{G}_{re}$ (see Remark \ref{connectedness}).
%If do not use this reduction, replace all $d$ in this section by $d_{re}$.

\smallskip
The full reconstruction process is divided into two main parts. 
The first part proves that under the Two-Points Condition, the Neumann boundary spectral data determine the Fourier coefficients of all $L^2$-normalized functions supported at one single interior point (and all boundary distance functions corresponding to interior points). The second part proves that these information then determines the graph structure and the weights.

\subsection{Characterization by boundary data} 
\label{characterizationObjects}

In this subsection, we construct a characterization of the boundary distance functions by boundary data.
We mention that the related constructions on partially ordered lattices that contain boundary distance functions as maximal elements have been used to study inverse problems on manifolds in \cite{Oksanen}.

\smallskip
Let $s : \partial G \to \Z_+$. We equip the set of such functions with the following partial order
\begin{equation} \label{partOrder}
  \forall z\in\partial G : s_1(z)\leqslant s_2(z) \quad
  \Longrightarrow \quad s_1\leqslant s_2.
\end{equation}
We consider the set of initial values for which the corresponding waves are not
observed at the boundary before time $s(\cdot)$,
\[
\mathcal{W}(s)=\{W: G\cup\partial G\to \R \;| \;
\partial_{\nu}W|_{\partial G}=0,\, u^W(z,t)=0 \textrm{ for all }z\in
\partial G,\, t<s(z)\}.
\]
Let $N = |G|$ be the number of interior vertices, and we define the set
\begin{equation}
  \mathcal{U} = \{ s:\partial G \to \Z_+ \mid 2\leqslant
  s(\cdot)\leqslant N,\, \dim(\mathcal{W}(s))\neq 0\}.
\end{equation}
We remark that given the Neumann boundary spectral data, the conditions of $\mathcal{W}(s)$ correspond to a system of linear equations on $\partial G$ for solving the Fourier coefficients of the initial value, which will be explained in details in the next subsection. As a consequence, the Neumann boundary spectral data determine the set $\mathcal{U}$.

Due to the linearity of the wave equation \eqref{waveequation}, the set $\mathcal W(s)$ is a linear space over $\R$, so $\dim(\mathcal
W(s))$ is simply its dimension as a vector space. The condition $\dim(\mathcal
W(s))\neq 0$ simply means that there exists a nonzero initial value such that the corresponding wave satisfies the conditions of $\mathcal{W}(s)$.
Observe that the conditions of $\mathcal{W}(s)$ indicate that any
initial value $W\in \mathcal{W}(s)$ vanishes on the whole boundary
since $u^W(z,0)=W(z)$. Then the condition $u^W(z,t)=0$ for $t=0$
implies the same condition for $t= 1$ due to the initial conditions of
\eqref{waveequation}. Hence we only need to consider $s\geqslant 2$ in the definition above.

%there is at least one $s:\partial G\to
%\{2,\ldots,N\}$ for which the wave corresponding to a nontrivial initial
%value reaches each $z\in\partial G$ at the earliest at time $t=s(z)$.

\medskip
The set $\mathcal U$ is a set of functions equipped with the
partial order \eqref{partOrder}. We are interested in its maximal elements with respect to the partial order, denoted by $\max(\mathcal{U})$.

\begin{lemma}\label{maximal}
  Let $\mathbb G$ be a finite connected weighted graph with boundary satisfying \cref{assumption}.
   Then we have
  \begin{equation}
    \{r_x \mid x\in G - N(\partial G)\} \subseteq \max(\mathcal U).
  \end{equation}
  Furthermore, for any nonzero initial value $W\in \mathcal{W}(r_x)$
  where $x\in G-N(\partial G)$, we have $\supp(W)=\{x\}$. \footnote{Precisely, $r_x$ is the boundary distance function with respect to $d_{re}$ on the reduced graph $\mathbb{G}_{re}$. Recall that we assumed $\mathbb G = \mathbb
G_{re}$. This is the case throughout Section \ref{sectionspectral}.}
\end{lemma}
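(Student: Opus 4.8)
The plan is to deduce both assertions of the lemma from a single \emph{support statement}: every nonzero $W\in\mathcal W(r_x)$ satisfies $\supp(W)=\{x\}$. Granting this, the membership $r_x\in\mathcal U$ is easy and the maximality $r_x\in\max(\mathcal U)$ follows formally, so the real work is concentrated in the support statement. The test function I would use throughout is the delta function $\delta_x$ at $x$, extended to $\partial G$ by the Neumann rule \eqref{uValuesOnBndry}. Since $x\in G-N(\partial G)$ is not adjacent to any boundary point, this extension vanishes on $\partial G$ and satisfies $\partial_\nu\delta_x|_{\partial G}=0$ automatically.

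First I would check $r_x\in\mathcal U$. Applying \cref{wavefront} to $\delta_x$ (support the single point $x$, with $\delta_x(z)=0$) gives $u^{\delta_x}(z,t)=0$ for all $t<d_{re}(x,z)=r_x(z)$, so $\delta_x\in\mathcal W(r_x)$ and $\dim\mathcal W(r_x)\neq0$. It then remains to verify $2\le r_x\le N$. The lower bound is immediate, since $x\notin N(\partial G)$ forces $d_{re}(x,z)\ge2$. For the upper bound I would invoke \cref{as1case2} of \cref{assumption}: a $d_{re}$-shortest path from $x$ to $z$ cannot pass through an interior boundary vertex $z'$, because its two interior neighbours on the path would be adjacent by \cref{as1case2}, producing a strictly shorter path. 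Hence such a path visits only distinct interior vertices before reaching $z$, and there are at most $N=|G|$ of them, giving $r_x(z)\le N$.

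The heart of the matter is the support statement, and its hard part will be showing $x\in S$, where $S:=\supp(W)$ for a nonzero $W\in\mathcal W(r_x)$. Note first that $W$ vanishes on $\partial G$ (as $r_x\ge2$ forces $u^W(z,0)=W(z)=0$), so $S\subseteq G$. The only tool producing information is the following consequence of \cref{wavefront}, which I will call $(\ast)$: if $z\in\partial G$ has a \emph{unique} nearest point $y_0\in S$ with respect to $d_{re}$, then with $t_0=d_{re}(y_0,z)$ the support hypothesis of \cref{wavefront} holds (only $y_0$ among $S$ lies in $N_{re}(z,t_0)$), so $u^W(z,t_0)\neq0$ by applying the lemma to $\pm W$; since $W\in\mathcal W(r_x)$ this forces $t_0\ge r_x(z)$, i.e. $d_{re}(y_0,z)\ge d_{re}(x,z)$. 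The difficulty is precisely that $(\ast)$ only speaks to directions with a \emph{unique} nearest point, so I must manufacture such directions. To prove $x\in S$, I would suppose not and set $T=S\cup\{x\}$, so $|T|\ge2$; by the Two-Points Condition (\cref{as1case1}, valid for $d_{re}$ by \cref{reduced}), $T$ has at least two distinct extreme points. Any extreme point $p\neq x$ of $T$, realized by some $w\in\partial G$, lies in $S$ and is then automatically also the unique nearest point of $S$ from $w$, so $(\ast)$ yields $d_{re}(p,w)\ge d_{re}(x,w)$; but $p$ being the unique nearest point of $T\ni x$ from $w$ gives $d_{re}(p,w)<d_{re}(x,w)$, a contradiction. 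Hence every extreme point of $T$ equals $x$, contradicting the existence of two distinct ones, and so $x\in S$.

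With $x\in S$ in hand, $S=\{x\}$ will follow quickly: if $|S|\ge2$ then $S$ has distinct extreme points $x_1\neq x_2$ realized by $z_1,z_2$; for each $i$, $(\ast)$ gives $d_{re}(x_i,z_i)\ge d_{re}(x,z_i)$, while $x\in S$ gives $d_{re}(x,z_i)\ge d_{re}(S,z_i)=d_{re}(x_i,z_i)$, forcing equality and hence $x=x_i$ by uniqueness of the nearest point, which is impossible for two distinct $x_i$. Finally, maximality of $r_x$ follows from the support statement: if some $s\in\mathcal U$ satisfied $s\ge r_x$ with $s(z_0)>r_x(z_0)$, then any nonzero $W\in\mathcal W(s)\subseteq\mathcal W(r_x)$ would be a scalar multiple of $\delta_x$, yet $u^{\delta_x}(z_0,r_x(z_0))>0$ by \cref{wavefront} would contradict the required vanishing of $u^W$ at $z_0$ up to time $s(z_0)$. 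I expect the step $x\in S$, and in particular the device of passing to $T=S\cup\{x\}$ to convert the Two-Points Condition into usable unique-nearest-point directions, to be the main obstacle.
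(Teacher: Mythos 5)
Your proof is correct and follows essentially the same route as the paper's: the key support statement $\supp(W)=\{x\}$ is obtained by combining the Two-Points Condition with the wavefront lemma at an extreme point distinct from $x$ (the paper's set $S=\{y\in G: W(y)\neq 0\}\cup\{x\}$ is exactly your $T$), and membership in $\mathcal U$ and maximality then follow just as in the paper. The only differences are organizational: you split the support argument into two stages ($x\in\supp(W)$, then $\supp(W)=\{x\}$) where the paper does both in one application of the Two-Points Condition, and you verify the bounds $2\leqslant r_x\leqslant N$ explicitly (via the shortcut argument from Assumption \ref{assumption}(\ref{as1case2})), a point the paper leaves implicit.
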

%Remark that for domains of influence (not involving waves), the
%situation is much simpler. In that case, the set of maximal elements
%is precisely the set of boundary distance functions. See a relevant
%lemma in earlier version.
\begin{proof}
  For an arbitrary point $x\in G-N(\partial G)$, we first show that
  for any nonzero initial value $W\in \mathcal{W}(r_x)$, we have
  $\supp(W)=\{x\}$ and consequently $r_{x}\in \mathcal{U}$ by
  \cref{wavefront}.
  Observe that $r_x(\cdot)\geqslant 2$ due to $x\in G-N(\partial G)$, and $r_x(\cdot)\leqslant N$ since we assume no edges between boundary points and any shortest path between $x$ and a boundary point can pass at most $N-1$ interior points.

  From the condition $W\in \mathcal{W}(r_{x})$, we see that the wave
  $u^W$ corresponding to this initial value satisfies $u^W(z,t)=0$ when
  $z\in\partial G$ and $t<r_{x}(z)$. Let
  \[
  S=\{y\in G \mid W(y)\neq 0\}\cup\{x\}.
  \]
  If $ |S|\geqslant 2$, the Two-Points Condition in \cref{assumption}
  implies that there exist $x_0\in S-\{x\}$ and $z_0\in \partial G$,
  such that $x_0$ is the unique nearest point in $S$ from $z_0$, which
  in particular yields $d(x_0,z_0)\leqslant d(x,z_0)-1 = r_x(z_0)-1$.
  But by the propagation of the wavefront (\cref{wavefront}), we see
  that $u^W(z_0,t_0)\neq 0$ for $t_0=d(x_0,z_0)$. This is a
  contradiction because $\mathcal{W}(r_x)$ requires
  $u^W(z_0,t_0)=0$ for $t_0<r_x(z_0)$. Therefore $|S|\leqslant 1$ and
  $\supp(W) \cap G\subseteq \{x\}$. Since
  $x\notin N(\partial G)$, we see by the Neumann boundary condition that $W=0$
  on $\partial G$. Hence $\supp(W) = \{x\}$.

  We next show that the boundary distance functions are maximal
  elements in $\mathcal U$. Let $x\in G-N(\partial G)$ and suppose
  there exists an element $s\in \mathcal{U}$ such that $s\geqslant
  r_x$. By definition of $\mathcal{U}$, there exists a nonzero initial
  value $W$ such that
  \begin{equation} \label{sVanishingProp}
    u^W(z,t)=0, \qquad \forall z\in \partial G,\,t<s(z).
  \end{equation}
  Since $r_{x}\leqslant s$, the same vanishing conditions hold for all
  $t<r_{x}(z)$. Then the same argument above yields $\supp(W)= \{x\}$. If
  $r_{x}(z')<s(z')$ for some boundary point $z'\in \partial G$ then
  \cref{wavefront} shows that $u^W(z',t')\neq 0$ for
  $t'=d(x,z')=r_x(z')$. This contradicts \eqref{sVanishingProp}.
  Therefore if $r_{x}\leqslant s$, then $r_{x}=s$ and therefore $r_x$ is a
  maximal element in $\mathcal U$.
\end{proof}

%The following is also true: If $s$ is a maximal element of $\mathcal{U}$, then $dim(\mathcal{W}(s))=1$.

Next, we recover the boundary distance functions corresponding to points in $G\cap
N(\partial G)$. 
Let $s : \partial G \to \Z_+$, and we define
\begin{align*}
  \mathcal{W}_b(s)=\{W:G\cup\partial G\to \R \mid
  \partial_{\nu}W|_{\partial G}=0,\, u^W(z,t)=0 \textrm{ when }z\in
  \partial G,\, s(z)\geqslant 2&\\ \textrm{and } t<s(z)\}&,
\end{align*}
and for $y\in G\cap N(\partial G)$, define the set
\[
\mathcal{U}_b(y) = \{s:\partial G\to \Z_+ \mid
\dim(\mathcal{W}_b(s))\neq 0,\,s(\cdot)\leqslant N,\, s(z)=1 \textrm{
  only if }z\sim y\}.
\]
Recall that $N=|G|$ is the number of interior
points. Functions $s\in\mathcal U_b(y)$ can have $s(z)>1$ at $z \sim
y$. 
As with the previous case, the set $\mathcal{U}_b(y)$ is also determined by the Neumann boundary spectral data, which will be explained in the next subsection.
%meaning $s(z)\geqslant 2$ for all boundary vertices not connected to $y_0$; $s$ can be larger than 1 at  boundary vertices connected to $y_0$.
%This set $\mathcal{U}_b(y)$ is well-defined because the edges between
%$\partial G$ and $G$ are known.

One can show the following lemma by a similar argument as
\cref{maximal}.
\begin{lemma}\label{maximalb}
  Let $\mathbb G$ be a finite connected weighted graph with boundary satisfying \cref{assumption}.
  Then for any $y\in G\cap N(\partial G)$, we have
  \begin{equation}
    r_y \in \max\big(\mathcal U_b(y)\big).
  \end{equation}
  Furthermore, for any nonzero
  initial value $W\in \mathcal{W}_b(r_y)$ where $y\in G\cap N(\partial
  G)$, we have $supp(W)\cap G=\{y\}$.
\end{lemma}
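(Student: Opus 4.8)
The plan is to follow the proof of \cref{maximal} closely, with the single structural change forced by $y\in N(\partial G)$: an initial value concentrated at $y$ no longer vanishes at the boundary points adjacent to $y$, which is precisely why $\mathcal W_b(s)$ imposes its vanishing condition only where $s(z)\geqslant 2$. I would first establish the support statement, and then use it to obtain both the membership $r_y\in\mathcal U_b(y)$ and the maximality.

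First I would prove the ``furthermore'' claim. Fix a nonzero $W\in\mathcal W_b(r_y)$ and put $S=\{p\in G\mid W(p)\neq 0\}\cup\{y\}$. Assuming $|S|\geqslant 2$, the Two-Points Condition supplies at least two extreme points of $S$, hence an extreme point $x_0\neq y$ realized by some $z_0\in\partial G$. Uniqueness of the nearest point gives $t_0:=d_{re}(x_0,z_0)<d_{re}(y,z_0)=r_y(z_0)$, and since $t_0\geqslant 1$ this forces $r_y(z_0)\geqslant 2$; consequently the condition of $\mathcal W_b(r_y)$ is active at $z_0$ and yields $W(z_0)=u^W(z_0,0)=0$, while the same uniqueness gives the support hypothesis $\{p\in N_{re}(z_0,t_0)\cap G\mid W(p)\neq 0\}\subseteq\{x_0\}$ required by \cref{wavefront}. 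Applying \cref{wavefront} to $\pm W$ (according to the sign of $W(x_0)\neq 0$) then produces $u^W(z_0,t_0)\neq 0$ with $t_0<r_y(z_0)$, contradicting membership in $\mathcal W_b(r_y)$. Thus $|S|\leqslant 1$, so $\supp(W)\cap G\subseteq\{y\}$; and $W(y)\neq 0$, for otherwise $W$ would vanish on $G$ and hence on $\partial G$ by \eqref{uValuesOnBndry}, contradicting $W\neq 0$. This gives $\supp(W)\cap G=\{y\}$.

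To see $r_y\in\mathcal U_b(y)$, I would exhibit a nonzero element of $\mathcal W_b(r_y)$, namely $W$ with $W(y)=1$, $W\equiv 0$ on $G-\{y\}$, and $W|_{\partial G}$ determined by \eqref{uValuesOnBndry}. For every $z$ with $r_y(z)\geqslant 2$ one has $z\not\sim y$, so $W(z)=0$, and \cref{wavefront} with $t_0=r_y(z)$ gives $u^W(z,t)=0$ for $t<r_y(z)$; hence $W\in\mathcal W_b(r_y)$. The requirement $r_y(z)=1\Leftrightarrow z\sim y$ is immediate, and $r_y(z)=d_{re}(y,z)\leqslant N$ holds because a shortest path in the reduced graph from $y$ to a boundary point cannot pass through an intermediate boundary point (its two interior neighbours on the path would be adjacent by \cref{as1case2}, allowing a shortcut), so the path uses at most $N$ distinct interior vertices. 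For maximality, suppose $s\in\mathcal U_b(y)$ with $s\geqslant r_y$ and pick a nonzero $W\in\mathcal W_b(s)$; since $s\geqslant r_y$, this $W$ also lies in $\mathcal W_b(r_y)$, so $\supp(W)\cap G=\{y\}$ by the previous step. If $s(z')>r_y(z')$ for some $z'$ with $r_y(z')\geqslant 2$, then $W(z')=0$ and \cref{wavefront} gives $u^W(z',r_y(z'))\neq 0$, contradicting the vanishing demanded at $t=r_y(z')<s(z')$; if instead $r_y(z')=1$ (so $z'\sim y$ and $s(z')\geqslant 2$), then $u^W(z',1)=W(z')\neq 0$ contradicts the vanishing demanded at $t=1<s(z')$. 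Hence $s=r_y$, proving $r_y\in\max(\mathcal U_b(y))$.

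The only genuinely new difficulty beyond \cref{maximal} is this last pair of boundary points adjacent to $y$: there $W(z')\neq 0$, so \cref{wavefront} is unavailable, and I expect the maximality step to hinge instead on the elementary identity $u^W(z',1)=W(z')$ to exclude $s(z')>1$.
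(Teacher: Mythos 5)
Your proof is correct and follows essentially the same route as the paper's: the Two-Points Condition plus \cref{wavefront} to pin down $\supp(W)\cap G=\{y\}$, the single-point initial value to show $r_y\in\mathcal U_b(y)$, and then the support statement to rule out $s\gneq r_y$. The only differences are cosmetic improvements in rigor: you justify $r_y\leqslant N$ (which the paper dismisses as clear) via \cref{as1case2}, and you resolve the case $z'\sim y$, $s(z')\geqslant 2$ by the elementary identity $u^W(z',1)=W(z')\neq 0$ from \eqref{uValuesOnBndry}, where the paper cites \cref{wavefront} even though $d(y,z')=1$ sits outside that lemma's stated conclusion.
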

\begin{proof}
  Following the argument in \cref{maximal}, for any nonzero initial
  value $W\in \mathcal{W}_b(r_y)$, consider the set $S=\{x\in G \mid
  W(x)\neq 0\}\cup\{y\}$. If $|S|\geqslant2$, we can find an
  extreme point $y_0\in G - \{y\}$ of $S$ with respect to some $z_0\in
  \partial G$. The extreme point condition implies that $z_0$ cannot
  be connected to $y$, and hence $W(z_0)=u^W(z_0,0)=0$ by the
  condition $W\in\mathcal W_b(r_y)$. The assumptions of
  \cref{wavefront} are satisfied for $W$ and the pair of points
  $y_0,z_0$, so $u^W(z_0,t_0)\neq 0$ for $t_0 = d(y_0,z_0)$. But by
  the definition of the extreme point, we have $d(y_0,z_0) < d(y,z_0)
  = r_y(z_0)$. This contradicts the condition $u^W(z_0,t)=0$ for $t<r_y(z_0)$
  of $W\in \mathcal W_b(r_y)$, considering $z_0 \not\sim
  y$. Hence $|S|=1$ and $\supp(W)\cap G= \{y\}$.

  Let $y\in G\cap N(\partial G)$. We first show that
  $r_y\in\mathcal U_b(y)$. Clearly $r_y\leqslant N$ and $r_y(z)=1$
  only at boundary points $z$ connected to $y$. It remains to show
  that there exists a nonzero initial value in $\mathcal W_b(r_y)$. Consider an initial value
  $W$ satisfying $W(x)=1$ at $x=y$ and $W(x)=0$ otherwise in $G$. The values of $W$ on $\partial G$ are determined by 
  the Neumann boundary condition (\ref{uValuesOnBndry}). 
  By the definition of $W_b(r_y)$, it suffices to show that $u^W(z,t)=0$ for all
  $t<r_y(z)$ when $r_y(z)\geqslant 2$. At such boundary points $z$ satisfying $r_y(z)\geqslant 2$ (i.e. $z \not\sim y$), the Neumann condition gives
  $W(z)=0$. Moreover, we have $W=0$ at all points in $N(z,d(y,z)) \cap G$
  except for $y$ at which $W(y)>0$. Hence \cref{wavefront} yields
  that $u^W(z,t)=0$ for all $t<d(y,z)=r_y(z)$. Thus $r_y\in \mathcal U_b(y)$.

  Next, we show that $r_y$ is maximal. Let $s\in\mathcal U_b(y)$ with $r_y \leqslant s$. By the
  definition of $\mathcal U_b(y)$, we have $s(\cdot)\leqslant N$ and
  $s(z)> 1$ if $z\not\sim y$. Furthermore, there is a nonzero initial
  value $W\in\mathcal W_b(s)$ satisfying
  \begin{equation} \label{uWBndryS}
    u^W(z,t) = 0, \qquad \forall z\in\partial G,\, s(z)\geqslant 2,\,
    t<s(z).
  \end{equation}
  If $s(z)=1$ occurs, it follows from the definition of 
  $\mathcal U_b(y)$ that $z\sim y$,
  i.e. $r_y(z)=1$. Since $r_y\leqslant s$, the wave
  $u^W$ satisfies the following possibly less strict
  set of conditions
  \[
  u^W(z,t) = 0, \qquad \forall z\in\partial G,\,r_y(z)\geqslant 2,\,
  t<r_y(z).
  \]
  This exactly means $W\in\mathcal W_b(r_y)$. Then the same argument above
  yields $\supp(W)\cap G = \{y\}$. Assume that
  $r_y(z') < s(z')$ for some $z'\in\partial G$. This indicates that
  $s(z')\geqslant 2$ as $r_y>0$. Hence
  \eqref{uWBndryS} implies that $u^W(z',t)=0$ for $t < s(z')$, and in
  particular $W(z') = u^W(z',0) = 0$. Then \cref{wavefront} shows that
  $u^W(z',t')\neq 0$ for $t'=d(y,z')<s(z')$, which is a contradiction. Hence $r_y(z')=s(z')$ for all $z'\in\partial G$, and therefore $r_y$ is maximal.
\end{proof}

To uniquely determine $G \cap N(\partial G)$,
we need to find all maximal elements of $\mathcal{U}_b(y)$ for every
$y\in G\cap N(\partial G)$. Then this set of maximal elements contains the set of
boundary distance functions $\{r_y\}_{y\in G\cap N(\partial G)}$,
which corresponds to the initial values
supported only at one single point of $G\cap N(\partial G)$ in the
interior. However in general, as with 
\cref{maximal}, there are more maximal elements than just the boundary
distance functions. 
%For example, the initial value shown in Figure
%\ref{fig_simple_ex} corresponds to a maximal element of $\mathcal{U}$
%other than the boundary distance functions. Note that although the
%graph in Figure \ref{fig_simple_ex} does not satisfy the Two-Points
%Condition, we can modify it to satisfy the Assumption by adding
%additional edges and boundary vertices to the vertices with nonzero
%values.

To reconstruct the graph structure, we need to single out the actual boundary distance
functions from the whole set of maximal elements. We will spend the rest of this subsection to address it.

\begin{definition}\label{arrivalTimeDef}
  We define the \emph{arrival time} of a wave with an initial value
  $W$ at a boundary point $z\in \partial G$, to be the earliest time
  $t\geqslant 1$ when $u^W(z,t)\neq0$. Denote the arrival time at $z$ by $t^W_z$.
\end{definition}

\begin{definition}\label{A0}
  Denote by $\mathcal{A}$ the set of all the $L^2(G)$-normalized
  initial values $W$ satisfying the following three conditions:
  \begin{enumerate}[(1)]
  \item \label{aPropInit} $W:G\cup\partial G\to\R$, $\partial_\nu
    W|_{\partial G} = 0$, i.e. $W$ is an initial value;
  \item \label{aPropMax} $W\in \mathcal{W}(s)$ for some $s\in
    \max(\mathcal{U})$, or $W\in \mathcal{W}_b(s)$ for some $s\in
    \max(\mathcal{U}_b(y))$ and some $y\in G\cap N(\partial G)$,
    i.e. $W$ corresponds to a maximal element;
  \item \label{aPropPos} for all $z\in\partial G$, we have
    $u^W(z,t_z^W)>0$, i.e. the first arrival of the wave at any
    boundary point is with a positive sign.
  \end{enumerate}

For $x\in G$, we use $W_x$ to denote a function satisfying $\partial_{\nu}W_x|_{\partial G}=0$,
$\supp(W_x)\cap G=\{x\}$ and $W_x(x)>0$.
  Finally, we define the set $\mathcal A_0$ as the $L^2$-normalized initial values
  supported at one single point, % in the interior,
  \[
  \mathcal A_0 = \left\{ \frac{W_x}{\lVert W_x\rVert_{L^2(G)}}
  \,\middle|\, x\in G,\, \partial_{\nu}W_x|_{\partial G}=0,\,
  \supp(W_x)\cap G=\{x\},\, W_x(x)>0 \right\}.
  \]
\end{definition}

Let us remark here regarding the motivation of Definition \ref{A0}.
In the next subsection, we will show in details that the Neumann boundary spectral data determine the sets $\mathcal{U}$ and $\mathcal{U}_b(y)$. This would imply that the Neumann boundary spectral data then determine the Fourier coefficients of functions in $\mathcal{A}$. The functions in $\mathcal{A}$ can be supported at multiple interior points, while what we really want are the functions in $\mathcal{A}_0$ supported at one single interior point. Therefore, we need to construct an algorithm (Lemma \ref{procedure}) to single out $\mathcal{A}_0$ from $\mathcal{A}$.

\begin{lemma} \label{A0inA}
  Let $\mathbb G$ be a finite connected weighted graph with boundary satisfying \cref{assumption}.
  Then
  $\mathcal A_0 \subseteq \mathcal A$.
\end{lemma}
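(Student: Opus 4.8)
The plan is to show that every element of $\mathcal{A}_0$ satisfies the three defining conditions of $\mathcal{A}$ in \cref{A0}. So fix an arbitrary normalized initial value $W = W_x/\lVert W_x \rVert_{L^2(G)}$ belonging to $\mathcal{A}_0$, where $\supp(W_x)\cap G=\{x\}$ and $W_x(x)>0$ for some $x\in G$. Condition \eqref{aPropInit} is immediate, since membership in $\mathcal{A}_0$ already requires $\partial_\nu W_x|_{\partial G}=0$, and rescaling by a positive constant preserves this. The genuine work is in verifying conditions \eqref{aPropMax} and \eqref{aPropPos}, and here I expect to split into two cases according to whether $x\in G - N(\partial G)$ or $x\in G\cap N(\partial G)$.

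For the maximality condition \eqref{aPropMax}, the idea is to match $W_x$ with the boundary distance function $r_x$. In the case $x\in G - N(\partial G)$, I would argue that $W_x\in\mathcal{W}(r_x)$: since $\supp(W_x)=\{x\}$ (using $W_x=0$ on $\partial G$ because $x\notin N(\partial G)$), the wavefront lemma (\cref{wavefront}) guarantees that the wave $u^{W_x}$ does not reach any boundary point $z$ before time $d_{re}(x,z)=r_x(z)$, which is exactly the vanishing condition defining $\mathcal{W}(r_x)$. By \cref{maximal}, $r_x\in\max(\mathcal{U})$, so $W_x$ corresponds to a maximal element. In the case $x\in G\cap N(\partial G)$, I would instead show $W_x\in\mathcal{W}_b(r_x)$ by the same wavefront reasoning, now only demanding vanishing at boundary points $z$ with $r_x(z)\geqslant 2$ (i.e. $z\not\sim x$); for such $z$ the Neumann condition forces $W_x(z)=0$, and the remaining support hypothesis places us squarely in the setting of \cref{wavefront}. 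Then \cref{maximalb} gives $r_x\in\max(\mathcal{U}_b(x))$, so again $W_x$ corresponds to a maximal element.

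The positivity condition \eqref{aPropPos} is where I expect the main obstacle, and it is the reason the sign normalization $W_x(x)>0$ appears in the definition of $\mathcal{A}_0$. The claim is that for every $z\in\partial G$, the first nonzero value $u^{W_x}(z,t_z^W)$ is strictly positive. This should follow directly from the quantitative part of the wavefront lemma: since $\supp(W_x)\cap G=\{x\}$ and $W_x(x)>0$, the conditions of \cref{wavefront} hold with $t_0=r_x(z)=d_{re}(x,z)$, and conclusion (1) of that lemma yields $u^{W_x}(z,t)=0$ for $t<t_0$ together with $u^{W_x}(z,t_0)>0$. Hence the arrival time is exactly $t_z^W=r_x(z)$ and the wave arrives with positive sign, giving \eqref{aPropPos}. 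The subtlety I would be careful about is confirming that the support hypothesis \eqref{wSupportInX} of \cref{wavefront}, namely that the only possibly-nonzero interior value inside $N_{re}(z,t_0)$ is at $x$, holds for each individual $z$; this is automatic here because $\supp(W_x)\cap G=\{x\}$ globally, so the restriction to any neighbourhood is trivially satisfied. Collecting the three verified conditions shows $W\in\mathcal{A}$, and since $W$ was arbitrary, $\mathcal{A}_0\subseteq\mathcal{A}$.
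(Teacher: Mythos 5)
Your treatment of properties \ref{aPropInit} and \ref{aPropMax} of \cref{A0} is correct and is essentially the paper's argument: $W_x\in\mathcal{W}(r_x)$ when $x\in G-N(\partial G)$, respectively $W_x\in\mathcal{W}_b(r_x)$ when $x\in G\cap N(\partial G)$, combined with \cref{maximal,maximalb} to get maximality. The gap is in your verification of the positivity property \ref{aPropPos}. You assert that ``the conditions of \cref{wavefront} hold with $t_0=r_x(z)=d_{re}(x,z)$'' for \emph{every} $z\in\partial G$, but \cref{wavefront} has the hypothesis $W(z)=0$, and this fails exactly when $z\sim x$: by the Neumann boundary condition \eqref{uValuesOnBndry},
\[
  W_x(z)=\frac{g_{xz}W_x(x)}{\sum_{p\sim z,\,p\in G}g_{pz}}>0 .
\]
The lemma cannot be applied there even in principle, since its conclusion (1) would force $t_0\geqslant 2$, while here $t_0=d_{re}(x,z)=1$. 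So the subtlety is not the support hypothesis \eqref{wSupportInX} (which, as you say, is automatic), but the vanishing hypothesis at $z$, and your argument as written breaks down at every boundary point adjacent to $x$.

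The missing case is easy to close, and the paper closes it in exactly this way: when $z\sim x$, the wave at $z$ at times $0$ and $1$ equals $W_x(z)>0$ by \eqref{uValuesOnBndry}, so by \cref{arrivalTimeDef} (the arrival time is the earliest $t\geqslant 1$ with $u^W(z,t)\neq 0$) one gets $t_z^W=1=r_x(z)$ with a positive first arrival. For all other boundary points, and in particular for every $z\in\partial G$ when $x\in G-N(\partial G)$, your application of \cref{wavefront} is legitimate because then $W_x(z)=0$. With this one case added, your proof is complete and coincides with the paper's.
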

\begin{proof}
  Let $W\in \mathcal{A}_0$. Then $W$ is $L^2(G)$-normalized and it satisfies
  property \ref{aPropInit} in \cref{A0}. Furthermore
  $W=W_x$ for some $x\in G$. We claim that $t_z^W = r_x(z)$ for
  all $z\in\partial G$.

  This claim follows directly from the propagation of the wavefront
  (\cref{wavefront}) if $x\in G - N(\partial G)$, which yields that
  $u^W(z,t_z^W)>0$ for all $z\in\partial G$. If $x\in G\cap N(\partial
  G)$ we see that $t_z^W=r_x(z)$ when $r_x(z)\geqslant 2$ by
  \cref{wavefront}. If $r_x(z)=1$, then $z\sim x$ and $u^W(z,t)$ is
  determined by the Neumann boundary condition \eqref{uValuesOnBndry},
  which gives $u^W(z,0)=u^W(z,1)>0$. Hence $t_z^W=1$ in this case
  by \cref{arrivalTimeDef}. In conclusion, $t_z^W = r_x(z)$ and
  $u^W(z,t_z^W)>0$ for all $z\in\partial G$, i.e. the property
  \ref{aPropPos} of \cref{A0}. Moreover, \cref{maximal,maximalb} imply
  that $t_\cdot^W$ is a maximal element, i.e. the property \ref{aPropMax}
  with $s(z) = t_z^W$.
\end{proof}

Observe that $\mathcal{A}_0$ is an orthonormal basis of the linear span
of $\mathcal{A}$ with respect to the $L^2(G)$-inner product.

\begin{lemma}\label{arrivaltime}
  Let $\mathbb G$ be a finite connected weighted graph with boundary satisfying \cref{as1case2} of \cref{assumption}.
  \begin{enumerate}[(1)]
  \item Given any initial value $W$ satisfying
    $\partial_{\nu}W|_{\partial G}=0$ and the property \ref{aPropPos}
    of \cref{A0}, if $x_0$ is an extreme point of $\supp(W)\cap G$,
    then $W(x_0)>0$.
  
  \item Given any nonzero initial value $W$ satisfying
    $\partial_{\nu}W|_{\partial G}=0$ and $W|_G\geqslant 0$, then for any $z\in\partial G$, we have
    \[
    t^{W}_z=\min_{x\in\supp(W)\cap G}t^{W_x}_z\, .
    \]
  \end{enumerate}
  As a consequence, if $W,W_0$ are two nonnegative initial values
  satisfying the Neumann boundary condition and $\supp(W_0)\cap
  G\subseteq \supp(W)\cap G$, then $t^{W_0}_z\geqslant t^W_z$ for any
  $z\in \partial G$.
\end{lemma}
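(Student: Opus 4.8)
The plan is to reduce both statements to the wavefront lemma (\cref{wavefront}) together with the linearity of the wave equation \eqref{waveequation}. The basic fact I will use repeatedly is that for an initial value supported at a single interior point $x$ with a positive value at $x$, the arrival time at any $z\in\partial G$ equals $r_x(z)=d_{re}(x,z)$ and the first arrival is \emph{positive}; this is exactly what was established for single-point sources in the proof of \cref{A0inA} via \cref{wavefront}. In particular $t^{W_x}_z=d_{re}(x,z)$, so the only content of the minimum appearing in part (2) is the geometric quantity $\min_{x\in\supp(W)\cap G}d_{re}(x,z)$.

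For part (1), let $x_0$ be an extreme point of $S=\supp(W)\cap G$, realized by some $z_0\in\partial G$, and set $t_0=d(x_0,z_0)$. Since $x_0$ is the \emph{unique} nearest point of $S$ from $z_0$, every other point of $S$ lies at distance strictly greater than $t_0$ from $z_0$, whence $\{y\in N_{re}(z_0,t_0)\cap G : W(y)\neq 0\}\subseteq\{x_0\}$. I will then split into two cases. If $t_0\geqslant 2$, no point of $S$ is adjacent to $z_0$, so the Neumann boundary condition forces $W(z_0)=0$ and \cref{wavefront} applies to $\pm W$: were $W(x_0)<0$, applying the lemma to $-W$ would give $u^W(z_0,t)=0$ for all $t<t_0$ and $u^W(z_0,t_0)<0$, so the first arrival at $z_0$ is negative, contradicting property (\ref{aPropPos}). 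If $t_0=1$, then $x_0$ is the only point of $S$ adjacent to $z_0$, so the boundary formula \eqref{uValuesOnBndry} gives $u^W(z_0,1)=W(z_0)$ with the same sign as $W(x_0)$, and again $W(x_0)<0$ would contradict (\ref{aPropPos}). Since $x_0\in S$ forces $W(x_0)\neq 0$, we conclude $W(x_0)>0$.

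For part (2), I will decompose $W=\sum_{x\in S}W_x$, where $W_x$ is the initial value equal to $W(x)$ at the single interior point $x$, vanishing at the other interior points, and taking the boundary values dictated by \eqref{uValuesOnBndry}; this identity holds on $G$ by construction and on $\partial G$ because the boundary formula is linear in the interior values. By linearity of \eqref{waveequation}, $u^W(z,t)=\sum_{x\in S}u^{W_x}(z,t)$. Writing $t_\ast=\min_{x\in S}d_{re}(x,z)$, every summand vanishes for $t<t_\ast$, while at $t=t_\ast$ the summands with $d_{re}(x,z)>t_\ast$ vanish and those with $d_{re}(x,z)=t_\ast$ are strictly positive, using $W(x)>0$ for $x\in S$ (which holds because $W|_G\geqslant 0$). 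Hence $u^W(z,t_\ast)>0$, so $t^W_z=t_\ast=\min_{x\in S}t^{W_x}_z$. The final consequence is then immediate: if $\supp(W_0)\cap G\subseteq\supp(W)\cap G$, the minimum defining $t^{W_0}_z$ runs over a smaller set, so $t^{W_0}_z\geqslant t^W_z$.

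I expect the main obstacle to be the \emph{non-cancellation} at the first arrival time in part (2): a priori the contributions $u^{W_x}(z,t_\ast)$ from distinct nearest sources could cancel, and it is precisely the hypothesis $W|_G\geqslant 0$, together with the uniform positivity of single-source first arrivals from \cref{wavefront}, that rules this out. A secondary technical point is the boundary case $t_0=1$ in part (1), where \cref{wavefront} does not directly supply the needed positivity (its hypothesis $W(z_0)=0$ fails) and must be replaced by the direct computation through \eqref{uValuesOnBndry}.
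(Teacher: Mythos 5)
Your proof is correct and follows essentially the same route as the paper's: part (1) via the wavefront lemma (applied with the sign flip $\pm W$) together with the Neumann boundary formula \eqref{uValuesOnBndry} for the adjacent case $d(x_0,z_0)=1$, and part (2) via decomposing $W$ into positive multiples of single-point sources, using linearity and the strict positivity of each first arrival to rule out cancellation. The only differences are cosmetic — you fold the coefficients $W(x)>0$ into the single-point sources where the paper writes $W|_G=\sum_x \alpha_x W_x|_G$ with $\alpha_x>0$, and you make explicit the non-cancellation step that the paper states in one line.
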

%The second part (2) probably even works when Assumption
%\ref{assumption}(2) is not satisfied. For this, we only need a
%version of wavefront lemma without relating to distance, to directly
%show positivity at arrival time.  Note that $W\geqslant 0$ is
%equivalent to $W|_{G}\geqslant 0$. And $supp(W_0)\cap G\subseteq
%supp(W)\cap G$ is equivalent to $supp(W_0)\subseteq supp(W)$.
\begin{proof}
  For the first claim, let $z_0\in \partial G$ be a boundary point
  realizing the extreme point condition of $x_0$. If
  $d(x_0,z_0)\geqslant 2$, the arrival time $t^{W}_{z_0}=d(x_0,z_0)$
  due to \cref{wavefront}. If $d(x_0,z_0)=1$, then $t^{W}_{z_0}=1$ due
  to the Neumann boundary condition for $W$. Hence the property
  \ref{aPropPos}, Lemma \ref{wavefront} and the Neumann boundary
  condition yield $W(x_0)> 0$.
  
  Next, consider the second claim. Due to $W|_G\geqslant 0$, the initial
  value $W$ restricted to $G$ can be written as
  \[
  W|_{G}=\sum_{x\in\supp(W)\cap G} {\alpha}_x W_x|_{G}
  \]
  for some positive numbers $\alpha_x$, where $W_x(x)=1$ and $W_x(y)=0$
  if $y\in G-\{x\}$. Since $W$ and each $W_x$ determine their
  boundary values uniquely and linearly from their values on $G$ by
  \eqref{uValuesOnBndry}, the form above extends to the whole graph
  $G\cup\partial G$. By linearity and the uniqueness of the solution of
  \eqref{waveequation}, the wave $u^W$ has the following form at any
  $z\in \partial G$ and $t\in \N$,
  \[
  u^{W}(z,t)=\sum_{x\in\supp(W)\cap G} \alpha_x u^{W_x}(z,t).
  \]
  Since $u^{W_x}(z,t^{W_x}_z)>0$ for any $z\in \partial G$ by
  \cref{wavefront} and all $\alpha_x$ are positive, we know that the
  earliest time $u^W(z,\cdot)$ becomes nonzero is the earliest time
  when any of $u^{W_x}(z,\cdot)$ becomes nonzero. 
  This shows that for any $z\in\partial G$,
  \[
  t^{W}_z=\min_{x\in\supp(W)\cap G}t^{W_x}_z.
  \]

  The last part of the lemma follows from the condition that
  $\supp(W_0)\cap G\subseteq \supp(W)\cap G$, since a minimum over a
  smaller set can only be larger.
\end{proof}

\begin{lemma}\label{samesign}
  Let $\mathbb G$ be a finite connected weighted graph with boundary satisfying \cref{assumption}.
  If an
  initial value $W$ satisfies $W|_G\geqslant 0$ and $|\supp(W)\cap
  G|\geqslant 2$, then $W\notin \mathcal{A}$.
\end{lemma}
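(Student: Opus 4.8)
The plan is to prove the contrapositive by contradiction: suppose $W \in \mathcal{A}$ satisfies $W|_G \geqslant 0$ and $|\supp(W)\cap G| \geqslant 2$, and derive an impossibility. Since $W \in \mathcal{A}$, it corresponds to a maximal element (property~\ref{aPropMax} of \cref{A0}), meaning $W \in \mathcal{W}(s)$ for some $s \in \max(\mathcal U)$, or $W \in \mathcal{W}_b(s)$ for some $s \in \max(\mathcal U_b(y))$. The key tension I want to exploit is this: a wave supported at two or more interior points is forced to arrive at the boundary \emph{strictly earlier} than a wave supported at fewer points, because the arrival time is a \emph{minimum} of individual arrival times (\cref{arrivaltime}(2)). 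But a maximal element in the partial order corresponds to the \emph{latest} possible arrival times. These two facts should collide.

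First I would invoke the Two-Points Condition (\cref{as1case1}) applied to the set $S = \supp(W) \cap G$, which has cardinality $\geqslant 2$: this produces two extreme points $x_1, x_2 \in S$, realized by boundary points $z_1, z_2 \in \partial G$ respectively. By \cref{arrivaltime}(1), since $W|_G \geqslant 0$ and $W$ satisfies property~\ref{aPropPos}, we get $W(x_1) > 0$ and $W(x_2) > 0$. Now I would construct a competing initial value, namely $W_0$ supported on a strictly smaller subset of $S$ (for instance, the single-point initial value $W_{x_1}$ supported only at $x_1$, or more usefully $W$ with the mass at one extreme point removed). The idea is to compare arrival times. By \cref{arrivaltime}(2), $t^W_z = \min_{x \in S} t^{W_x}_z$, so the arrival time of $W$ at any $z$ equals the \emph{smallest} boundary distance from $z$ to a point of $S$. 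In particular, since $x_1$ is the \emph{unique} nearest point of $S$ from $z_1$, we have $t^W_{z_1} = d(x_1, z_1) < d(x, z_1)$ for all other $x \in S$; the analogous statement holds at $z_2$ with $x_2$.

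The heart of the argument is to show that the maximal element $s$ associated to $W$ cannot actually be maximal. I would build a strictly larger candidate in $\mathcal U$ (or $\mathcal U_b(y)$) by removing the contribution of one extreme point, which delays the arrival time at its realizing boundary point without shortening arrivals elsewhere. Concretely, consider $W' = W - W(x_2) W_{x_2}$, which is still nonnegative, satisfies the Neumann condition, has support $S - \{x_2\}$, and is nonzero (since $|S| \geqslant 2$). By \cref{arrivaltime}(2), $t^{W'}_z = \min_{x \in S - \{x_2\}} t^{W_x}_z \geqslant t^W_z$ for every $z$, with \emph{strict} inequality at $z_2$, because $x_2$ was the unique nearest point realizing the old minimum there. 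Thus the function $s'(z) := t^{W'}_z$ dominates $s(z) = t^W_z$ pointwise and strictly exceeds it at $z_2$, so $s' > s$ in the partial order \eqref{partOrder}. Since $W' \in \mathcal{W}(s')$ (resp.\ $\mathcal W_b(s')$) is nonzero, $s'$ lies in $\mathcal U$ (resp.\ $\mathcal U_b(y)$, after checking the $s(z)=1$ clause using that $x_2$'s removal only increases arrival times), contradicting the maximality of $s$.

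The main obstacle I anticipate is the careful bookkeeping distinguishing the two cases ($\mathcal U$ versus $\mathcal U_b(y)$) and verifying that $s'$ genuinely satisfies the membership constraints of the relevant set --- in particular the bound $s'(\cdot) \leqslant N$ (which holds since arrival times are boundary distances, bounded by $N$ by connectedness) and, in the $\mathcal U_b(y)$ case, the delicate clause ``$s(z)=1$ only if $z \sim y$'' (delaying arrivals only strengthens this). A secondary subtlety is confirming that $t^W_{z_2} = d(x_2, z_2)$ truly reflects $x_2$ as the \emph{unique} minimizer, which is exactly the content of the extreme-point definition, so that strictness of $s'(z_2) > s(z_2)$ is guaranteed rather than a mere $\geqslant$.
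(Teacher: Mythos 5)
Your proof is correct and takes essentially the same route as the paper's: both use the Two-Points Condition to pick an extreme point of $\supp(W)\cap G$, remove the initial value's mass at that point, and use the strictly delayed arrival time at the realizing boundary vertex (via \cref{wavefront} and \cref{arrivaltime}) to exhibit an element of $\mathcal{U}$ (resp.\ $\mathcal{U}_b(y)$) strictly above $s$, contradicting maximality. The remaining differences are cosmetic: the paper raises $s$ by exactly one at the single boundary vertex rather than passing to the full arrival-time function of the truncated wave; the bound $s'\leqslant N$ should be justified by \cref{avoidBndry} (i.e.\ \cref{as1case2} of \cref{assumption}), not mere connectedness; and your parenthetical identification $s(z)=t^W_z$ is neither proved nor needed, since the inequality $s\leqslant t^W_\cdot$ coming from $W\in\mathcal{W}(s)$ already suffices for your comparison.
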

\begin{proof}
  Denote $S=\{x\in G \mid W(x)\neq 0\}$ and we have $|S|\geqslant 2$
  by assumption. We will show that the maximality requirement
  (\ref{aPropMax}) of \cref{A0} fails if (\ref{aPropInit}) is
  satisfied.

  \smallskip
  First, let us bring forth a contradiction from the assumption that
  $W\in \mathcal{W}(s)$ for some $s\in\max(\mathcal{U})$. By the
  Two-Points Condition, there exists $x_1\in S$ and $z_1\in \partial
  G$, such that $x_1$ is the unique nearest point in $S$ from
  $z_1$. Since $W\in\mathcal W(s)$ and $s(z_1)\geqslant 2$ as
  $s\in\mathcal U$, we have $W(z_1)=u^W(z_1,0)=0$. Then
  \cref{wavefront} implies that $t^{W}_{z_1}=d(x_1,z_1)$ and
  $2\leqslant s(z_1)\leqslant t^{W}_{z_1}$. We consider the following
  modified function $s':\partial G\to \Z_+$ defined by
  $s'(z_1)=s(z_1)+1$ and equal to $s$ at all other boundary points.

  Now we prove that $s'\in \mathcal{U}$ and consequently $s$ cannot be
  maximal in $\mathcal{U}$. On one hand, we have $s' \geqslant s \geqslant 2$. 
  On the other hand, we have $s' \leqslant N$. This is because if $s(z_1)=N$ then
  $d(x_1,z_1)\geqslant N$, which means that all $y\in S-\{x_1\}$ are at least distance
  $N+1$ from $z_1$. But this is impossible since there are only $N$
  interior points, considering that distances (precisely $d_{re}$ on the reduced graph $\mathbb{G}_{re}$) are realized by
  paths passing through interior points by \cref{avoidBndry}. Hence
  $2\leqslant s' \leqslant N$ and it remains to show that $\mathcal
  W(s')$ is nontrivial. Define another initial value $W_0$ to be
  $W_0(x_1)=0$ and equal to $W$ elsewhere on $G$. By the propagation
  of the wavefront (\cref{wavefront}), we have $u^{W_0}(z_1,t)=0$ for
  $t\leqslant d(x_1,z_1)$. Since $s(z_1)\leqslant
  t^{W}_{z_1}=d(x_1,z_1)$ and $s'(z_1) = s(z_1)+1$, we have
  $u^{W_0}(z_1,t)=0$ for $t<s'(z_1)$. Since $W|_G\geqslant 0$, the
  arrival time of the wave $u^{W_0}$ at any other boundary point is no
  earlier than that of $u^W$ by \cref{arrivaltime}. This shows
  $W_0\in\mathcal{W}(s')$ and it is a nontrivial
  element since $|S|\geqslant 2$. Hence $s'\in\mathcal{U}$ and $s$ cannot be maximal.
  
  \smallskip
  Next we consider $W\in \mathcal{W}_b(s_b)$ for some $s_b\in
  \mathcal{U}_b(y)$ and show that $s_b$ cannot be maximal. Following
  the previous argument, we can find $x_1\in S$ and $z_1\in \partial
  G$, such that $x_1$ is the unique nearest point in $S$ from $z_1$. If
  $s_b(z_1)\geqslant 2$, then the previous argument applies. Otherwise
  if $s_b(z_1)=1$, then $z_1\sim y$. We define $s_b':\partial G\to
  \Z_+$ by $s_b'(z_1)=2$ and equal to $s_b$ at all other boundary
  points. As before, we see that $s_b' \leqslant N$, and 
  $s_b'(z)=1$ implies $z\sim y$. It remains to show that there is a
  nontrivial initial value $W_0\in\mathcal{W}_b(s_b')$. We choose
  $W_0(x_1)=0$ and equal to $W$ elsewhere on $G$. Since $x_1$
  is an extreme point of $S$ with respect to $z_1$ and $W_0(x_1)=0$,
  we have $W_0(z_1)=0$ by the Neumann boundary condition. This implies that
  $u^{W_0}(z_1,0)=u^{W_0}(z_1,1)=W_0(z_1)=0$, and hence
  $u^{W_0}(z_1,t)=0$ for $t<s_b'(z_1)$. Then the same argument as for the earlier
  case shows that $u^{W_0}(z,t)=0$ for
  $t<s_b'(z)$ when $s_b'(z)\geqslant 2$. Thus we find a nontrivial initial value
  $W_0\in \mathcal W_b(s_b')$. Therefore $s_b$ cannot be maximal
  in $\mathcal U_b(y)$.
\end{proof}

Finally, we use the following criteria to distinguish $\mathcal{A}_0$ from
$\mathcal{A}$.

\begin{lemma}\label{procedure}
  Let $\mathbb G$ be a finite connected weighted graph with boundary satisfying \cref{assumption}.
  Then a subset
  $\tilde{\mathcal{A}}\subseteq \mathcal{A}$ satisfies the following
  two properties
  \begin{enumerate}[(1)]
  \item \label{ONbasis} $\tilde{\mathcal{A}}$ is an orthogonal basis
    of the linear span of $\mathcal{A}$ in $L^2(G)$;
  \item \label{negativity} for any $W\in
    \mathcal{A}-\tilde{\mathcal{A}}$, there exists $\tilde{W}\in
    \tilde{\mathcal{A}}$ such that $\langle W,\tilde{W}
    \rangle_{L^2(G)} <0$,
  \end{enumerate}
  if and only if $\tilde{\mathcal{A}}=\mathcal{A}_0$.
\end{lemma}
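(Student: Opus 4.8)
The plan is to prove the two implications separately, using throughout the observation that $\mathcal{A}_0$ is precisely the set of nonnegative elements of $\mathcal A$, namely $\mathcal A_0=\{W\in\mathcal A : W|_G\geqslant 0\}$, which is immediate from \cref{samesign} together with \cref{arrivaltime}(1). For each $x\in G$ I write $\hat W_x$ for the element of $\mathcal A_0$ with $\supp(\hat W_x)\cap G=\{x\}$ and $\hat W_x(x)>0$. Since the inner product \eqref{innerproduct} is evaluated only on $G$, one has $\langle W,\hat W_x\rangle_{L^2(G)}=\mu_x\,\hat W_x(x)\,W(x)$, so the sign of $\langle W,\hat W_x\rangle_{L^2(G)}$ equals the sign of $W(x)$. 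I will also use that, since $\mathcal A_0$ is an orthonormal basis of $\operatorname{span}\mathcal A$ (as observed after \cref{A0inA}), any $\tilde{\mathcal A}$ satisfying~(1) has exactly $N=|\mathcal A_0|$ elements.

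For the direction $\tilde{\mathcal A}=\mathcal A_0\Rightarrow(1),(2)$: property~(1) is exactly the fact that $\mathcal A_0$ is an orthonormal, hence orthogonal, basis of $\operatorname{span}\mathcal A$. For~(2), take $W\in\mathcal A-\mathcal A_0$. Since $W$ is $L^2$-normalized and satisfies \ref{aPropPos} of \cref{A0}, its support cannot be a single point, for a singleton support would force $W=\hat W_x\in\mathcal A_0$ by \cref{arrivaltime}(1); thus $|\supp(W)\cap G|\geqslant 2$. Then \cref{samesign} forbids $W|_G\geqslant 0$, so $W(x)<0$ for some $x$, and the sign computation above gives $\langle W,\hat W_x\rangle_{L^2(G)}<0$ with $\hat W_x\in\mathcal A_0=\tilde{\mathcal A}$, which is exactly~(2).

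The substantive direction is $(1),(2)\Rightarrow\tilde{\mathcal A}=\mathcal A_0$. I set $I=\{x\in G:\hat W_x\in\tilde{\mathcal A}\}$ and $J=G-I$; since both bases have $N$ elements, it suffices to rule out $J\neq\emptyset$. Arguing by contradiction, assume $J\neq\emptyset$. Any $v\in\tilde{\mathcal A}-\mathcal A_0$ is orthogonal to each $\hat W_y$ with $y\in I$, hence vanishes on $I$, so $\supp(v)\cap G\subseteq J$; moreover $v\in\mathcal A-\mathcal A_0$ forces $|\supp(v)\cap G|\geqslant 2$ by the characterization above, whence $|J|\geqslant 2$. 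Now I would apply the Two-Points Condition (\cref{as1case1} of \cref{assumption}) to the set $J\subseteq G$ itself: it produces an extreme point $x^\ast\in J$, realized by some $z^\ast\in\partial G$ for which $x^\ast$ is the \emph{unique} nearest point of $J$. Since $x^\ast\in J$ we have $\hat W_{x^\ast}\in\mathcal A-\tilde{\mathcal A}$, so~(2) yields $v^\ast\in\tilde{\mathcal A}$ with $\langle\hat W_{x^\ast},v^\ast\rangle_{L^2(G)}<0$, i.e. $v^\ast(x^\ast)<0$; orthogonality to the $\hat W_y$, $y\in I$, again forces $\supp(v^\ast)\cap G\subseteq J$. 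But then $x^\ast\in\supp(v^\ast)\cap G\subseteq J$, and the uniqueness of $x^\ast$ as nearest point of $J$ makes $x^\ast$ the unique nearest point of the smaller set $\supp(v^\ast)\cap G$ from $z^\ast$ as well; hence $x^\ast$ is an extreme point of $\supp(v^\ast)\cap G$. As $v^\ast\in\mathcal A$ satisfies \ref{aPropPos}, \cref{arrivaltime}(1) gives $v^\ast(x^\ast)>0$, contradicting $v^\ast(x^\ast)<0$. This contradiction yields $J=\emptyset$, so $\mathcal A_0\subseteq\tilde{\mathcal A}$, and equality follows from equal cardinality.

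I expect the main obstacle to be locating the correct contradiction in this second direction: sign and orthogonality information alone is insufficient, since there exist orthonormal bases---for instance the columns of the $3\times3$ orthogonal matrix with $-\tfrac13$ on the diagonal and $+\tfrac23$ off the diagonal---in which every vector has a negative entry while every coordinate direction is negative in some vector, so the positivity pattern permitted by~(2) does not by itself force a permutation. The essential point is to reinject the \emph{graph-geometric} Two-Points Condition, applied not to the support of a single wave but to the index set $J$ of the missing basis vectors, so that the extreme point $x^\ast$ of $J$ is automatically extreme for the support of whichever $v^\ast$ witnesses~(2); the positivity-at-extreme-points statement \cref{arrivaltime}(1) then closes the argument.
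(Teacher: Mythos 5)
Your proposal is correct and takes essentially the same approach as the paper's proof: the easy direction is the paper's argument verbatim (via \cref{samesign} and the sign computation against singleton waves), and the hard direction uses the same two key ingredients, namely the Two-Points Condition together with positivity at extreme points (\cref{arrivaltime}(1)). The only difference is cosmetic: you apply the Two-Points Condition to the set $J$ of interior points whose singleton waves are missing from $\tilde{\mathcal{A}}$ and derive a pointwise sign contradiction from a single witness of Property~(2), whereas the paper applies it to the union of the supports of the multi-point elements of $\tilde{\mathcal{A}}$ (a set which, under Property~(1), coincides with your $J$) and shows that every inner product with the singleton wave at the extreme point is nonnegative---the same argument read in contrapositive form.
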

\begin{remark*}
  Elements of $\mathcal A$ are normalized, so we are actually searching for an orthonormal basis satisfying Property (\ref{negativity}). Property (\ref{ONbasis}) can also be formulated as follows: $\tilde{\mathcal{A}}$ has cardinality equal to $|G|$, and its elements are mutually orthogonal with respect to the $L^2(G)$-inner product.
\end{remark*}
%The existence of one extreme point is enough for this lemma. However,
%the result that $\mathcal{A}$ contains all boundary distance
%functions uses two extreme points.
\begin{proof}
  First, we show that $\mathcal{A}_0$ satisfies these two properties. The set $\mathcal{A}_0$ satisfies Property~(\ref{ONbasis}) as a direct consequence of \cref{A0inA}. Since every function in
  $\mathcal{A}-\mathcal{A}_0$ is supported at multiple interior points by the definition of $\mathcal A_0$, \cref{samesign} implies
  that any function $W\in \mathcal A -\mathcal A_0$ must have a negative
  value at some interior point, say at $x\in G$. Then the condition $\langle W,\tilde{W}
    \rangle_{L^2(G)} <0$ is satisfied with $\tilde W = W_x$. Hence Property \ref{negativity} is satisfied for $\mathcal{A}_0$.

  Next, we prove the ``only if'' direction. \emph{We claim that if $\tilde{\mathcal{A}}\nsubseteq \mathcal{A}_0$,
  then the Properties (\ref{ONbasis}) and (\ref{negativity}) cannot be
  satisfied at the same time.} Suppose $\tilde{\mathcal{A}}\nsubseteq
  \mathcal{A}_0$ and Property~(\ref{ONbasis}) is true. The set
  $\tilde{\mathcal{A}}$ consists of two types of initial values: a)
  initial values supported at one single point in the interior (corresponding to
  the boundary distance functions), and b) initial values supported at
  multiple points in the interior (where interactions occur). Note
  that $\tilde{\mathcal{A}}$ may not
  contain the former type of initial values, but it must contain the
  latter type of initial values since $\tilde{\mathcal A} - \mathcal
  A_0 \neq\emptyset$ by assumption. Property (\ref{ONbasis}) implies that the support of these two types of initial values does not intersect.
  
  Consider the union of the support (intersected with $G$) of all the
  initial values of type b) in $\tilde{\mathcal{A}}$, denoted by
  \[
  S = \bigcup_{{W\in\tilde{\mathcal A},\ W\textrm{ of type
        b)}}} \supp(W) \cap G.
  \]
  By the Two-Points Condition, we can find an extreme point
  $\tilde{x}\in G$ of $S$. Then we consider the $L^2(G)$-normalized initial value
  $W_{\tilde{x}} \in \mathcal{A}$ supported at $\tilde{x}\in G$ with $W_{\tilde x}(\tilde
  x)>0$. Orthogonality implies that
  $W_{\tilde{x}}\notin\tilde{\mathcal{A}}$, or equivalently $W_{\tilde{x}}\in \mathcal{A}-\tilde{\mathcal{A}}$. For any $\tilde{W}\in \tilde{\mathcal{A}}$ with
  $\langle W_{\tilde{x}},\tilde{W} \rangle_{L^2(G)} \neq 0$, we know
  that $\tilde{W}$ is supported at multiple points containing
  $\tilde{x}$. The condition that $\tilde{x}$ is an extreme point of
  $S$ implies that $\tilde{x}$ is also an extreme point of its subset
  $\supp(\tilde{W})\cap G$. Then $\tilde{W}(\tilde{x})>0$ by
  \cref{arrivaltime}(1). As a result, the positivity implies
  \[
  \langle W_{\tilde{x}}, \tilde{W} \rangle_{L^2(G)} =
  \mu_{\tilde{x}}W_{\tilde{x}}(\tilde{x})\tilde{W}(\tilde{x})>0.
  \]
  Hence $\langle W_{\tilde{x}}, \tilde{W} \rangle_{L^2(G)}\geqslant 0$
  for all $\tilde{W}\in \tilde{\mathcal{A}}$. This contradicts
  Property~(\ref{negativity}), and therefore proves the claim.
  
  The claim shows that $\tilde{\mathcal A} \subseteq
  \mathcal A_0$ for any subset $\tilde{\mathcal A} \subseteq \mathcal A$
  satisfying both properties. The set $\mathcal{A}_0$ is an orthogonal basis of the linear
  span of $\mathcal{A}$, and the
  only subset of $\mathcal{A}_0$ also forming a basis
  is $\mathcal{A}_0$ itself. Hence Property (\ref{ONbasis}) yields $\tilde{\mathcal A} = \mathcal A_0$.
\end{proof}

%\begin{remark}\label{connectedness}
%The connectedness assumption for $\mathbb{G}_{re}$ can be dropped throughout Section \ref{characterizationObjects}. Such change does not affect the use of the Two-Points Condition and Lemma \ref{wavefront}, because we can always find a boundary point realizing the extreme point condition within finite distance by Lemma \ref{reduced}. With the connectedness assumption for $\mathbb{G}_{re}$ dropped, the statements of \cref{maximal,maximalb} are changed into
%$$\min\{r_x, N\}\in \max(\mathcal{U}),\quad \min\{r_y, N\}\in \max\big(\mathcal{U}_b(y)\big),$$
%for any $x\in G-N(\partial G)$ and any $y\in G\cap N(\partial G)$, where $N$ represents the constant function on $\partial G$ with the value $N$.

%Furthermore, we can modify the condition (3) in Definition \ref{A0} such that the condition is imposed only for all $z\in \partial G$ satisfying $t_z^W<N$. Then one can check that Lemma \ref{procedure} holds without the connectedness assumption for $\mathbb{G}_{re}$.
%\end{remark}

\subsection{Determination from spectral data}

In this subsection, we will tie in the previous subsection's objects to the
spectral and boundary data of a graph. We will show that if two graphs
have the same \emph{a priori} data, then the spectral characterization of
various objects, such as $\mathcal U$, $\mathcal A_0$ from the previous
subsection, of these two graphs coincide. This leads to the conclusion
that the inverse spectral problem is solvable.

Without loss of generality, \emph{we still assume $\mathbb G = \mathbb
G_{re}$ throughout this section}.

%% \begin{remark*}
%%   If the graph satisfies \cref{assumption}, then for any given
%%   eigenvalue $\lambda_j$, each different $\phi_j$ satisfying
%%   \eqref{neumannEigFunc} gives a different $\phi_j|_{\partial
%%     G}$. This follows from \cref{implications}. In other words, given
%%   a Neumann eigenvalue and a corresponding eigenfunction
%%   $\phi_j|_{\partial G}$ only on the boundary, the latter's values in
%%   $G$ are uniquely determined.
%% \end{remark*}

%% By the remark above, although the Neumann boundary spectral data is
%% given only on the boundary, the eigenfunctions are 
%% well-defined on the whole graph. Hence the Fourier transform is well-defined, but
%% we have no a-priori knowledge on how to calculate it given only the
%% boundary values of the eigenfunctions.

\begin{lemma} \label{Wseries}
  Let $\mathbb G$ be a finite connected weighted graph with boundary, and $q$ be a real-valued potential
  function on $G$.  Let $\{\phi_j\}_{j=1}^N$ be the orthonormalized Neumann
  eigenfunctions of $(\mathbb{G},q)$. For any function
  $W: G\cup\partial G \to \R$, denote
  \[
  \widehat{W}(j) = \langle W, \phi_j\rangle_{L^2(G)}, \quad \widehat{W}=\big( \widehat{W}(1),\cdots, \widehat{W}(N) \big) \in \R^N.
  \]
  If $W$ is an initial value for
  \eqref{waveequation}, i.e. satisfying $\partial_\nu W|_{\partial G} = 0$,
  we have
  \[
  W(x) = \sum_{j=1}^N \widehat W(j) \phi_j(x), \quad \forall \,x\in G \cup \partial G.
  \]
  Conversely, given any $(c_j)_{j=1}^N \in \R^N$, $\sum_j c_j \phi_j$ gives an initial value for \eqref{waveequation}.
\end{lemma}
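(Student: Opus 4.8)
The plan is to prove the identity in two stages, first on the interior $G$ and then on the boundary $\partial G$, and to obtain the converse from pure linearity. The key structural fact I would invoke is that the operator $-\Delta_G+q$ is self-adjoint on the $N$-dimensional real inner product space $L^2(G)$: by Lemma \ref{Green}, the boundary terms vanish whenever both arguments satisfy the Neumann condition, and multiplication by $q$ is trivially symmetric, so $\langle u_1,(-\Delta_G+q)u_2\rangle_{L^2(G)} = \langle (-\Delta_G+q)u_1,u_2\rangle_{L^2(G)}$. Consequently the $N$ orthonormal eigenfunctions $\{\phi_j\}_{j=1}^N$, restricted to $G$, form an orthonormal basis of $L^2(G)$ (as recorded in \cref{Neumannboundarydata}).

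Given this, the interior part is the routine orthonormal expansion: the numbers $\widehat W(j)=\langle W,\phi_j\rangle_{L^2(G)}$ are exactly the Fourier coefficients of $W|_G$, so $W(x)=\sum_{j=1}^N\widehat W(j)\phi_j(x)$ holds for every $x\in G$. The substance of the lemma is that the same expansion persists at the boundary vertices, and this is where I would use that $W$ is an initial value, i.e. $\partial_\nu W|_{\partial G}=0$, together with the fact that each $\phi_j$ satisfies the same Neumann condition. For any function $u$ with $\partial_\nu u|_{\partial G}=0$, the formula \eqref{uValuesOnBndry} expresses the value at each $z\in\partial G$ as the fixed weighted average $\big(\sum_{x\sim z,\,x\in G}g_{xz}\,u(x)\big)\big/\big(\sum_{x\sim z,\,x\in G}g_{xz}\big)$ of its interior values. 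This boundary-extension map is \emph{linear} in the interior data and is the \emph{same} map for $W$ and for every $\phi_j$. Since the interior identity $W|_G=\sum_j\widehat W(j)\,\phi_j|_G$ is already established, applying this one linear map to both sides yields $W(z)=\sum_j\widehat W(j)\,\phi_j(z)$ for every $z\in\partial G$, completing the expansion on all of $G\cup\partial G$.

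The converse is immediate: given $(c_j)_{j=1}^N\in\R^N$, set $W=\sum_j c_j\phi_j$. Because $\partial_\nu$ is a linear operator in the values of its argument (it is a weighted sum of differences, as in \eqref{Neumanndef}) and $\partial_\nu\phi_j|_{\partial G}=0$ for each $j$, we get $\partial_\nu W|_{\partial G}=\sum_j c_j\,\partial_\nu\phi_j|_{\partial G}=0$, so $W$ is an initial value for \eqref{waveequation}. The only step requiring genuine care is the boundary extension: one must not try to expand $W$ in $\{\phi_j\}$ \emph{at} boundary vertices directly, since the inner product ignores $\partial G$, but rather reduce the boundary identity to the already-known interior identity via the linearity of \eqref{uValuesOnBndry}. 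Everything else is the standard orthonormal-basis expansion and linearity of $\partial_\nu$.
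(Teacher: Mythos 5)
Your proposal is correct and follows essentially the same route as the paper's (very terse) proof: orthonormal expansion of $W|_G$ in $\{\phi_j|_G\}$, propagation to $\partial G$ via the linearity of the boundary-extension formula \eqref{uValuesOnBndry} applied identically to $W$ and to each $\phi_j$, and linearity of $\partial_\nu$ for the converse. The one point the paper states explicitly that you leave implicit is that connectedness together with the standing assumption $\mathbb G=\mathbb G_{re}$ ensures every boundary vertex has an interior neighbour, so the denominator in \eqref{uValuesOnBndry} is nonzero and the extension map is actually well-defined.
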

\begin{proof}
Since $\mathbb{G}$ is connected and there are no edges between boundary points by assumption, every boundary point is connected to the interior.
  Then the claims are a direct consequence of the orthonormality of the
  eigenfunctions in $L^2(G)$, \eqref{uValuesOnBndry} and
  $\partial_{\nu}\phi_j|_{\partial G}=0$.
\end{proof}

\begin{notation*}
  Given a complete orthonormal family of Neumann eigenfunctions of $(\mathbb{G},q)$ and
  $V \subseteq  L^2(G)$, we denote
  %\begin{equation}
  $  \widehat V = \{ \widehat f\in\R^N \mid f \in V\}.$
 % \end{equation}
  If $\mathbb G'$ is another finite weighted graph with boundary, then we denote by $V'$
  a subset of $L^2(G')$. In this case, $\widehat{V'}$ is defined the same as
  above, but the hat-notation itself is defined using the
  eigenfunctions $\phi_j'$ of $\mathbb G'$ rather than those of
  $\mathbb G$.
\end{notation*}

The following lemma enables us to calculate a wave at any boundary point and any time,
if we know the Neumann boundary spectral data and the Fourier transform (or the spectral representation) of the initial value of the wave.

\begin{lemma} \label{uRepresentation}
  Let $\mathbb G$ be a finite connected weighted graph with boundary, and $q$ be a real-valued potential
  function on $G$.  Let $(\lambda_j,\phi_j)_{j=1}^N$ be the Neumann
  eigenvalues and orthonormalized Neumann eigenfunctions of $(\mathbb{G},q)$.
  Suppose $W$ is the initial value of some wave $u^W$ satisfying the
  wave equation \eqref{waveequation}. Then
  \begin{align*}
    u^W(x,t) &= \sum_{\{j\mid\lambda_j = 0\}} \widehat W(j) \phi_j(x)
    + \sum_{\{j\mid\lambda_j = 4\}} \widehat W(j) (-2t+1)(-1)^t
    \phi_j(x) \\ &\phantom{=} + \sum_{\{j\mid\lambda_j\notin\{0,4\}\}}
    \widehat W(j) \frac{\beta_{j,1}^{t}-\beta_{j,2}^{t} -
      (\beta_{j,1}^{t-1}-\beta_{j,2}^{t-1})}{\beta_{j,1}-\beta_{j,2}} \phi_j(x)\, ,
  \end{align*}
  where
  \beq\label{mu formula}
  \beta_{j,1} = -\frac{\lambda_j}{2} + 1 -
  \sqrt{\left(\frac{\lambda_j}{2} - 1\right)^2 - 1}\; , \quad \beta_{j,2}
  = -\frac{\lambda_j}{2} + 1 + \sqrt{\left(\frac{\lambda_j}{2} -
    1\right)^2 - 1}\; .
  \eeq
  Conversely, given any $\widehat W\in\R^N$, then the wave $u^W$ defined as
  above solves \eqref{waveequation} with the initial value $W =
  \sum_{j=1}^N \widehat W(j) \phi_j$.
\end{lemma}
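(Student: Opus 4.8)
The plan is to diagonalize the wave evolution in the Neumann eigenbasis, thereby decoupling \eqref{waveequation} into a family of independent scalar second-order linear recurrences indexed by $j$, and then to solve each recurrence in closed form. The three summands in the statement will emerge exactly as the three cases of the characteristic-root analysis.

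First I would fix $t$ and note that $u^W(\cdot,t)$ satisfies $\partial_\nu u^W(\cdot,t)|_{\partial G}=0$ by the Neumann boundary condition in \eqref{waveequation}, so by \cref{Wseries} it lies in the span of $\{\phi_j\}_{j=1}^N$; hence I may write $u^W(x,t)=\sum_{j=1}^N a_j(t)\phi_j(x)$ with $a_j(t)=\langle u^W(\cdot,t),\phi_j\rangle_{L^2(G)}$. Taking the $L^2(G)$-inner product of the wave equation with $\phi_j$ and using the self-adjointness of $-\Delta_G+q$ (\cref{Green}, whose boundary term vanishes because both $u^W(\cdot,t)$ and $\phi_j$ are Neumann), I obtain $\langle \Delta_G u^W(\cdot,t)-q\,u^W(\cdot,t),\phi_j\rangle_{L^2(G)} = -\lambda_j a_j(t)$, so each Fourier coefficient obeys the decoupled recurrence $a_j(t+1)-(2-\lambda_j)a_j(t)+a_j(t-1)=0$. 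The initial data $u^W(\cdot,0)=W$ and $D_t u^W(\cdot,0)=0$ translate into $a_j(0)=\widehat W(j)$ and $a_j(1)=a_j(0)=\widehat W(j)$.

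Next I would solve this recurrence through its characteristic polynomial $\mu^2-(2-\lambda_j)\mu+1=0$, whose roots are exactly $\mu_{j,1},\mu_{j,2}$ of \eqref{mu formula}, satisfying $\mu_{j,1}\mu_{j,2}=1$ and $\mu_{j,1}+\mu_{j,2}=2-\lambda_j$. The discriminant $(\lambda_j/2-1)^2-1$ vanishes precisely when $\lambda_j\in\{0,4\}$, which is what forces the case split. For $\lambda_j\notin\{0,4\}$ the roots are distinct, so $a_j(t)=A\mu_{j,1}^t+B\mu_{j,2}^t$; solving the $2\times 2$ system from $a_j(0)=a_j(1)=\widehat W(j)$ and simplifying yields the quotient $\big(\mu_{j,1}^{t+1}-\mu_{j,2}^{t+1}-(\mu_{j,1}^t-\mu_{j,2}^t)\big)/(\mu_{j,1}-\mu_{j,2})$ times $\widehat W(j)$. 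When $\lambda_j=0$ the double root is $1$ and the solution with equal initial values is constant, $a_j(t)=\widehat W(j)$; when $\lambda_j=4$ the double root is $-1$ and one gets $a_j(t)=\widehat W(j)(-2t+1)(-1)^t$. Reassembling $\sum_j a_j(t)\phi_j(x)$ gives the stated formula. For the converse, given $\widehat W\in\R^N$ I define $u^W$ by the formula; by construction each coefficient satisfies the recurrence and $a_j(0)=a_j(1)=\widehat W(j)$, so $u^W$ solves $D_{tt}u^W=\Delta_G u^W-q\,u^W$ with the correct initial data, while the Neumann condition holds automatically since every $\phi_j$ is Neumann. The computation is routine once the eigenbasis decoupling is established; the only genuine care is the degenerate-root analysis at $\lambda_j\in\{0,4\}$ and the repeated use of the fact that the Neumann condition is preserved at every time step, which is what keeps the eigenfunction expansion valid throughout.
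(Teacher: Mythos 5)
Your overall strategy is exactly the paper's: expand $u^W(\cdot,t)=\sum_{j}a_j(t)\phi_j$ using the Neumann condition and \cref{Wseries}, observe that each Fourier coefficient satisfies the decoupled recurrence $a_j(t+1)+(\lambda_j-2)a_j(t)+a_j(t-1)=0$ with $a_j(0)=a_j(1)=\widehat W(j)$, and solve it via the characteristic roots \eqref{mu formula}, splitting into the double-root cases $\lambda_j\in\{0,4\}$. (The paper obtains the same recurrence directly from \eqref{waveequation} and \eqref{neumannEigFunc} rather than by projecting with \cref{Green}, and it imposes the two initial conditions sequentially rather than as a $2\times 2$ system; these differences are cosmetic. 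Your treatment of the cases $\lambda_j=0$ and $\lambda_j=4$ is correct.)

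However, there is a concrete error at the decisive step: solving the system $A+B=\widehat W(j)$, $A\mu_{j,1}+B\mu_{j,2}=\widehat W(j)$ does \emph{not} simplify to the quotient with exponents $t+1$ and $t$. It gives $A=\widehat W(j)\frac{1-\mu_{j,2}}{\mu_{j,1}-\mu_{j,2}}$ and $B=\widehat W(j)\frac{\mu_{j,1}-1}{\mu_{j,1}-\mu_{j,2}}$, hence, using $\mu_{j,1}\mu_{j,2}=1$,
\begin{equation*}
a_j(t)=\widehat W(j)\,\frac{\bigl(\mu_{j,1}^{t}-\mu_{j,2}^{t}\bigr)-\bigl(\mu_{j,1}^{t-1}-\mu_{j,2}^{t-1}\bigr)}{\mu_{j,1}-\mu_{j,2}}\,,
\end{equation*}
with exponents $t$ and $t-1$, one step lower than in the display you are aiming at. The displayed quotient $\bigl(\mu_{j,1}^{t+1}-\mu_{j,2}^{t+1}-(\mu_{j,1}^{t}-\mu_{j,2}^{t})\bigr)/(\mu_{j,1}-\mu_{j,2})$ evaluates at $t=1$ to $\mu_{j,1}+\mu_{j,2}-1=1-\lambda_j$, so it violates $a_j(1)=a_j(0)$ whenever $\lambda_j\neq 0$ and cannot solve \eqref{waveequation}; for the same reason your converse direction (``by construction $a_j(0)=a_j(1)=\widehat W(j)$'') fails when applied to the displayed formula. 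In fact the paper's own proof derives precisely the $t,\,t-1$ version (its coefficients $b_j=-c_j(\mu_{j,2}-1)/(\mu_{j,1}-1)$ and $c_j=\widehat W(j)(\mu_{j,1}-1)/(\mu_{j,1}-\mu_{j,2})$ are your $A$ and $B$), so the exponent shift in the lemma's display is an off-by-one slip in the statement itself; your write-up silently reproduces that slip by asserting that the algebra ``yields'' it. A correct write-up should end with the $t,\,t-1$ formula and note explicitly that the stated display requires the corresponding index correction.
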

\begin{proof}
By assumption, every boundary point is connected to the interior.
  The wave satisfies $\partial_\nu u^W|_{\partial G \times \N}=0$, so the orthonormality of $\{\phi_j\}_{j=1}^N$ in
  $L^2(G)$ and \eqref{uValuesOnBndry} imply that
  \[
  u(x,t) = \sum_{j=1}^N a_j(t) \phi_j(x)
  \]
  on $(G\cup\partial G) \times \N$ for some functions $a_j: \N \to
  \R$. The wave equation \eqref{waveequation} and the eigenvalue problem \eqref{neumannEigFunc} yield that
    \beq\label{a formula}
  a_j(t+1) + (\lambda_j-2)a_j(t) + a_j(t-1) = 0
  \eeq
  for all $t\in\Z_+$. The solutions to the associated characteristic equation
  $\beta_j^2 + (\lambda_j-2)\beta_j + 1 = 0$ are shown in the lemma statement.
  The characteristic equation has two identical solutions if $\lambda_j=0$ or $4$, in which case the solutions are $1$ or $-1$. Hence $a_j$ has the following form:
%  If the solutions are identical, we have $a_j(t) = b t \mu_j^t
%  + c \mu_j^t$ for some constants $b,c$. 
%  The solutions are equal and
%  $\mu_{j,1}=\mu_{j,2} = 1$ if $\lambda_j=0$, and
%  $\mu_{j,1}=\mu_{j,2}=-1$ if $\lambda_j=4$. Otherwise $a_j(t) = b
%  \mu_{j,1}^t + c \mu_{j,2}^t$. Hence we get the following formulas
%  for $a_j$
\begin{equation}\label{formula_aj}
  a_j(t) = \begin{cases}
    b_j t + c_j, &\lambda_j = 0,\\
    (b_j t + c_j)(-1)^t, &\lambda_j = 4,\\
    b_j \beta_{j,1}^t + c_j \beta_{j,2}^t, &\lambda_j \notin\{0,4\},
  \end{cases}
  \qquad t\in\N.
\end{equation}

  Recall that $u^W(\cdot,0) = u^W(\cdot,1)$. Then the
  formula for $a_j$ implies that
  \begin{align*}
    &\sum_{\{j\mid\lambda_j = 0\}} c_j \phi_j + \sum_{\{j\mid\lambda_j
      = 4\}} c_j \phi_j + \sum_{\{j\mid\lambda_j\notin\{0,4\}\}} (b_j
    + c_j) \phi_j \\ &= \sum_{\{j\mid\lambda_j = 0\}} (b_j + c_j)
    \phi_j - \sum_{\{j\mid\lambda_j = 4\}} (b_j + c_j) \phi_j +
    \sum_{\{j\mid\lambda_j\notin\{0,4\}\}} (b_j\beta_{j,1} +
    c_j\beta_{j,2}) \phi_j.
  \end{align*}
  Taking the inner product with any $\phi_j$, and the orthonormality of $\phi_j$ allows
  us to solve $b_j$ as a function of $c_j,\beta_{j,1}$ and $\beta_{j,2}$
  for each $j=1,\ldots,N$. This gives
  \[
  b_j = \begin{cases}
    0, & \lambda_j = 0,\\
    -2c_j, & \lambda_j = 4,\\
    -c_j \frac{\beta_{j,2}-1}{\beta_{j,1}-1}, & \lambda_j \notin\{0,4\}.
  \end{cases}
  \]
  Note that $\beta_{j,1},\beta_{j,2}\neq \pm 1$ if $\lambda_j\notin \{0,4\}$.
  Hence we obtain the formula for the wave:
  \begin{align*}
    u^W(x,t) &= \sum_{\{j\mid\lambda_j = 0\}} c_j \phi_j(x) +
    \sum_{\{j\mid\lambda_j = 4\}} c_j (-2t+1)(-1)^t \phi_j(x) +
    \\ &\phantom{=} + \sum_{\{j\mid\lambda_j\notin\{0,4\}\}} c_j
    \left( - \frac{\beta_{j,2}-1}{\beta_{j,1}-1}\beta_{j,1}^t + \beta_{j,2}^t
    \right) \phi_j(x)\, ,
  \end{align*}
  for $x\in G\cup\partial G$ and $t\in\N$. This satisfies the second
  initial condition, the Neumann boundary condition and the wave equation in
  \eqref{waveequation}. By \cref{Wseries}, the first initial condition $u^W(x,0)=W(x)$
  gives that
  \[
  c_j = \begin{cases}
    \widehat W(j), &\lambda_j = 0,\\
    \widehat W(j), &\lambda_j = 4,\\
    \widehat W(j) \frac{\beta_{j,1}-1}{\beta_{j,1}-\beta_{j,2}}, &\lambda_j \notin \{0,4\}.
  \end{cases}
  \]
  The first claim of the lemma follows after plugging these into the formula for the wave. 
  The converse claim is a
  straightforward calculation whose details are actually scattered in
  the proof of the first claim.
\end{proof}

%Now we can start formulating tools for proving the unique
%determination of the inverse problem. 
In our setting, we are working with two
graphs having the same boundary and the same Neumann boundary spectral data.
For convenience, we make use of the following pullback notation.

\begin{notation*}
  Given two finite weighted graphs with boundary $\mathbb G,\mathbb G'$ and a
  boundary-isomorphism $\Phi_0$ (\cref{aPrioriGeom}), we define the
  following notation.
  \begin{itemize}
  \item For $f' : \partial G' \to \R$, we denote $\Phi_0^\ast f' = f'
    \circ (\Phi_0|_{\partial G})$.
  \item If $S'$ is a set of functions on $\partial G'$, denote $\Phi_0^\ast S' =
    \{ \Phi_0^\ast f' \mid f' \in S'\}$.
  \end{itemize}
  This notation defines $\Phi_0^\ast f' : \partial G \to \R$, and
  $\Phi_0^\ast S'$ as a set of functions on $\partial G$.
\end{notation*}

We consider initial values not just as
functions on the graph, but also as abstract points in $\R^N$ using
their Fourier series representation in \cref{Wseries}.
\cref{uRepresentation} shows that the spectral (Fourier)
coefficients of an initial value $W$ uniquely determine the boundary
values of the corresponding wave $u^W$. 

\begin{lemma} \label{sameOnBndry}
  Let $\mathbb G, \mathbb G'$ be two finite connected weighted graphs with boundary, 
  and $q,q'$ be real-valued potential functions on $G, G'$. 
  Suppose $(\mathbb{G},q)$ is spectrally
  isomorphic to $(\mathbb{G}',q')$ with a boundary-isomorphism $\Phi_0$, namely
  \[
  (\lambda_j,\phi_j|_{\partial G})_{j=1}^N =
  (\lambda_j',\Phi_0^\ast(\phi_j|_{\partial G'}))_{j=1}^{N}.
  \]
  Let $(c_j)_{j=1}^N \in\R^N$,
  $
  W = \sum_{j=1}^N c_j \phi_j,$ $ W' = \sum_{j=1}^N c_j \phi_j',
  $
  and $u^W,u^{W'}$ be the corresponding solution to the wave equation
  \eqref{waveequation} in $\mathbb G, \mathbb G'$. Then
  $
  u^W(z,t) = u^{W'}(\Phi_0(z),t)
  $
  for all $z\in\partial G$ and $t\in\N$.
\end{lemma}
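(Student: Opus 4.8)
The plan is to reduce the statement entirely to the explicit spectral formula for the wave established in \cref{uRepresentation}, which expresses $u^W(x,t)$ as a sum over $j$ of a time-dependent coefficient---depending only on $\widehat W(j)$, on the eigenvalue $\lambda_j$, and on $t$---multiplied by the eigenfunction value $\phi_j(x)$. Since this representation is built purely from spectral quantities, the idea is to verify that every ingredient of the formula evaluated at a boundary point $z$ for $u^W$ agrees with the corresponding ingredient for $u^{W'}$ evaluated at $\Phi_0(z)$, and then sum.

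First I would compute the spectral coefficients of the two initial values. Because $\{\phi_j\}_{j=1}^N$ is orthonormal in $L^2(G)$ and $W=\sum_j c_j\phi_j$, orthonormality gives $\widehat W(j)=\langle W,\phi_j\rangle_{L^2(G)}=c_j$; likewise, using the orthonormal basis $\{\phi_j'\}_{j=1}^N$ of $L^2(G')$, we obtain $\widehat{W'}(j)=c_j$ (here $\widehat{W'}$ is taken with respect to the eigenfunctions $\phi_j'$, per the hat-notation convention). Hence $\widehat W(j)=\widehat{W'}(j)$ for every $j$.

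Next I would unpack the hypothesis of spectral isomorphism, which asserts $\lambda_j=\lambda_j'$ and $\phi_j|_{\partial G}=\Phi_0^\ast(\phi_j'|_{\partial G'})$ for all $j$; the latter reads $\phi_j(z)=\phi_j'(\Phi_0(z))$ for every $z\in\partial G$. Since $\lambda_j=\lambda_j'$, the roots $\mu_{j,1},\mu_{j,2}$ of the characteristic equation in \eqref{mu formula} coincide for the two graphs, and consequently the entire time-dependent coefficient attached to index $j$ in the formula of \cref{uRepresentation}---in any of the three regimes $\lambda_j=0$, $\lambda_j=4$, or $\lambda_j\notin\{0,4\}$---is identical for $u^W$ and $u^{W'}$, as it depends only on $\widehat W(j)=c_j=\widehat{W'}(j)$, on $\lambda_j=\lambda_j'$, and on $t$. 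In particular the case distinction $\lambda_j\in\{0,4\}$ versus $\lambda_j\notin\{0,4\}$ is governed solely by the shared eigenvalues, so the two formulas split into matching blocks.

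Finally I would evaluate the representation of \cref{uRepresentation} at $x=z\in\partial G$ for $u^W$ and at $x=\Phi_0(z)$ for $u^{W'}$, comparing term by term: in each summand the time coefficient agrees by the previous step and the eigenfunction factor agrees because $\phi_j(z)=\phi_j'(\Phi_0(z))$. Summing over $j$ then yields $u^W(z,t)=u^{W'}(\Phi_0(z),t)$ for all $z\in\partial G$ and $t\in\N$. I do not expect a genuine obstacle here, since the analytical content is entirely carried by \cref{uRepresentation}; the only care required is bookkeeping the boundary identification through $\Phi_0$ and confirming that the spectral coefficients of $W$ and $W'$ genuinely coincide, which is immediate from orthonormality and \cref{Wseries}.
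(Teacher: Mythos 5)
Your proposal is correct and is essentially identical to the paper's own proof: both reduce the claim to the representation formula of \cref{uRepresentation}, using that $\widehat W(j)=c_j=\widehat{W'}(j)$ by orthonormality, $\lambda_j=\lambda_j'$ (so the time-dependent coefficients match), and $\phi_j=\phi_j'\circ\Phi_0$ on $\partial G$. The paper merely states this in one sentence, whereas you spell out the term-by-term comparison; the substance is the same.
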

\begin{proof}
  This is a direct consequence of the representation formula for $u^W,
  u^{W'}$ in \cref{uRepresentation}, since $\widehat W(j) = c_j =
  \widehat {W'}(j)$, $\lambda_j=\lambda_j'$ and $\phi_j =
  \phi_j'\circ\Phi_0$ on $\partial G$.
\end{proof}
We remark that a full
boundary-isomorphism is not needed for this lemma; a simple bijection
$\partial G \to \partial G'$ which makes the boundary spectral data
equivalent is enough.

\smallskip
Our next few tasks are to show that various objects from
\cref{characterizationObjects} are equivalent, or that their spectral
representations are the same for two spectrally isomorphic
graphs. Recall the definitions of the various objects $\mathcal W,
\mathcal W_b, \mathcal U, \mathcal U_b$ which were defined just before
\cref{maximal,maximalb}. The following lemma shows that knowing the Neumann boundary spectral data leads to the knowledge of the sets $\mathcal{U}, \mathcal{U}_b$. 

\begin{lemma} \label{Wsame}
    Let $\mathbb G, \mathbb G'$ be two finite connected weighted graphs with boundary, 
  and $q,q'$ be real-valued potential functions on $G, G'$. 
  Suppose $(\mathbb{G},q)$ is spectrally
  isomorphic to $(\mathbb{G}',q')$ with a boundary-isomorphism $\Phi_0$. Then
  $
  \widehat{\mathcal W}(s) = \widehat{\mathcal W'}(s')$ and
$  
\widehat{\mathcal W_b}(s) = \widehat{\mathcal W_b'}(s')
$
  for all $s:\partial G \to \Z_+$ and $s' = s\circ(\Phi_0|_{\partial
    G})^{-1}$. As a consequence,
  \[
  \mathcal U = \Phi_0^\ast \mathcal U', \qquad \mathcal U_b(y) =
  \Phi_0^\ast \big( \mathcal{U}_b'( y') \big),
  \]
  for all $y\in G\cap N(\partial G)$ and $y' = \Phi_0(y)$.
\end{lemma}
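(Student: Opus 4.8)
The plan is to reduce everything to \cref{sameOnBndry}, which asserts that two initial values with identical spectral (Fourier) coefficients produce waves whose boundary values agree under $\Phi_0$. Since the defining conditions of $\mathcal W(s)$ and $\mathcal W_b(s)$ constrain only the boundary values $u^W(z,t)$, these conditions should transfer verbatim once the boundary is relabeled via $\Phi_0$. The remaining claims about $\mathcal U,\mathcal U_b$ will then follow by translating dimension counts and the combinatorial side conditions through this relabeling.

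First I would prove the inclusion $\widehat{\mathcal W}(s) \subseteq \widehat{\mathcal W'}(s')$. Take $\widehat W \in \widehat{\mathcal W}(s)$, so that $W = \sum_j \widehat W(j)\phi_j \in \mathcal W(s)$, i.e. $u^W(z,t)=0$ for all $z\in\partial G$ and $t<s(z)$. Set $W' = \sum_j \widehat W(j)\phi_j'$; by \cref{Wseries} this is a legitimate initial value on $\mathbb G'$, and by orthonormality $\widehat{W'}=\widehat W$. Then \cref{sameOnBndry} gives $u^{W'}(\Phi_0(z),t)=u^W(z,t)$ for all $z,t$. Since $s'(\Phi_0(z))=s(z)$ by the definition $s'=s\circ(\Phi_0|_{\partial G})^{-1}$, we obtain $u^{W'}(z',t)=0$ for all $z'\in\partial G'$ and $t<s'(z')$, hence $W'\in\mathcal W'(s')$ and $\widehat W=\widehat{W'}\in\widehat{\mathcal W'}(s')$. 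Swapping the roles of $\mathbb G$ and $\mathbb G'$ (using $\Phi_0^{-1}$ and $s=s'\circ\Phi_0|_{\partial G}$) yields the reverse inclusion, giving $\widehat{\mathcal W}(s)=\widehat{\mathcal W'}(s')$. The identity $\widehat{\mathcal W_b}(s)=\widehat{\mathcal W_b'}(s')$ follows by the identical argument, using additionally that the extra side condition $s(z)\geqslant 2$ appearing in the definition of $\mathcal W_b$ is preserved under $s'(\Phi_0(z))=s(z)$.

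For the consequences, I would first note that $N=N'$, since spectral isomorphism requires the same number of eigenvalues counting multiplicities and $N=|G|$ equals this number; thus $\R^N=\R^{N'}$ and the displayed set equalities are meaningful. Because the map $W\mapsto\widehat W$ is a linear isomorphism from the space of initial values onto $\R^N$ (\cref{Wseries}), we have $\dim\mathcal W(s)=\dim\widehat{\mathcal W}(s)=\dim\widehat{\mathcal W'}(s')=\dim\mathcal W'(s')$, so the nontriviality condition $\dim\mathcal W(s)\neq 0$ transfers. The range condition $2\leqslant s(\cdot)\leqslant N$ transfers as well, since $s'(z')=s(z)$ merely relabels the values and $N=N'$. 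Combining these, $s\in\mathcal U$ if and only if $s'\in\mathcal U'$, which is exactly $\mathcal U=\Phi_0^\ast\mathcal U'$ (recall $\Phi_0^\ast s' = s$, and $\Phi_0^\ast$ is a bijection between functions on $\partial G'$ and on $\partial G$).

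The argument for $\mathcal U_b(y)$ is the same, except that the defining condition ``$s(z)=1$ only if $z\sim y$'' must also transfer, and this is the one place where the full strength of the boundary-isomorphism is needed rather than a mere boundary bijection. Property (ii) of \cref{aPrioriGeom} gives $z\sim y \Leftrightarrow \Phi_0(z)\sim'\Phi_0(y)=y'$, and since $s'(\Phi_0(z))=s(z)$ the condition becomes ``$s'(z')=1$ only if $z'\sim' y'$''. Hence $s\in\mathcal U_b(y)$ iff $s'\in\mathcal U_b'(y')$, giving $\mathcal U_b(y)=\Phi_0^\ast\mathcal U_b'(y')$. I do not expect a deep obstacle here: the entire substance resides in \cref{sameOnBndry}, and the remaining work is careful bookkeeping, transferring vanishing conditions, dimension counts, and the combinatorial side conditions through the relabeling $\Phi_0$.
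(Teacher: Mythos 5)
Your proposal is correct and follows essentially the same route as the paper's proof: both reduce the set equalities to \cref{sameOnBndry} by transferring an initial value through its Fourier coefficients, then obtain the statements about $\mathcal U$ and $\mathcal U_b(y)$ from the invertibility of $W\mapsto \widehat W$ (\cref{Wseries}) for the dimension counts and from property (ii) of the boundary-isomorphism for the adjacency condition $s(z)=1\Rightarrow z\sim y$. The only differences are cosmetic: you spell out the reverse inclusion and the equality $N=N'$ explicitly, which the paper dispatches with ``by symmetry.''
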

\begin{proof}
  We use the notation $s' = s\circ(\Phi_0|_{\partial G})^{-1}$
  throughout this proof. By symmetry, it suffices to prove that
  $\widehat{\mathcal W}(s) \subseteq \widehat{\mathcal W'}(s')$. Suppose
  $\widehat W=(c_j)_{j=1}^N \in \R^N$ for some $W\in\mathcal W(s)$. This gives
  $u^W(z,t)=0$ for all $z\in\partial G$ and $t<s(z)$. Then take the function $W' =
  \sum_{j=1}^N c_j \phi_j'$ with the same Fourier coefficients. 
  By \cref{sameOnBndry}, we see that
  \[
  u^{W'}(\Phi_0(z),t) = u^W(z,t) = 0
  \]
  for $z\in\partial G$ and $t<s(z) = s'(\Phi_0(z))$. As $z$ runs
  through $\partial G$, the point $\Phi_0(z)$ runs through the whole
  $\partial G'$. Hence $W' \in \mathcal W'(s')$ and $(c_j)_{j=1}^N = \widehat{W'}
  \in \widehat{\mathcal W'}(s')$. The same argument shows that
  $\widehat{\mathcal W_b}(s) \subseteq \widehat{\mathcal W_b'}(s')$.

  For the claim on $\mathcal U,\mathcal U'$, notice that
  \[
  \dim(\mathcal W(s)) = \dim(\widehat{\mathcal W}(s)) =
  \dim(\widehat{\mathcal W'}(s')) = \dim(\mathcal W'(s')).
  \]
  This is because the map $W \mapsto \widehat W$ from
  $L^2(G)$ to $\R^N$ is an invertible linear map by \cref{Wseries}. 
  %In
  %other words, the dimensions of $\mathcal W(s)$ and $\mathcal W'(s')$
  %agree when their spectral representations do so. 
  Thus $s\in\mathcal
  U$ if and only if $s'\in \mathcal U'$.

  If $s\in\mathcal U_b(y)$ for some $y\in G\cap N(\partial G)$, then
  $\dim(\mathcal W_b'(s')) = \dim(\mathcal
  W_b(s)) \neq 0$. 
  By definition, $s(z)=1$ only if $z\sim y$. By the definition of $\Phi_0$,
  $\Phi_0(z)\sim' \Phi_0(y)$ holds if and only if $z\sim y$. Hence $s'$
  satisfies all conditions of $\mathcal U_b'(y')$.
\end{proof}

The set $\mathcal A$ contains the initial values for which the
corresponding wavefront reaches the boundary with positive
values everywhere and as late as possible (Recall \cref{A0}). The following lemma
shows that knowing the boundary spectral data leads to the knowledge
of the spectral data of all such initial values.
%With this lemma, one can apply \cref{procedure} to reconstruct individual points of the graph.
\begin{lemma} \label{Asame}
   Let $\mathbb G, \mathbb G'$ be two finite connected weighted graphs with boundary, 
  and $q,q'$ be real-valued potential functions on $G, G'$. 
  Suppose $(\mathbb{G},q)$ is spectrally
  isomorphic to $(\mathbb{G}',q')$. Then
  $\widehat{\mathcal A} = \widehat{\mathcal A'}$.
\end{lemma}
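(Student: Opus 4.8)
The plan is to prove $\widehat{\mathcal A}\subseteq\widehat{\mathcal A'}$; the reverse inclusion then follows by symmetry, swapping the roles of $\mathbb G$ and $\mathbb G'$ and replacing $\Phi_0$ by $\Phi_0^{-1}$. So I would take an arbitrary $W\in\mathcal A$ with Fourier coefficients $\widehat W=(c_j)_{j=1}^N$, and form the initial value $W'=\sum_{j=1}^N c_j\phi_j'$ on $\mathbb G'$ carrying the \emph{same} coefficients. The goal is to show $W'\in\mathcal A'$, which gives $\widehat{W'}=\widehat W\in\widehat{\mathcal A'}$. By the converse part of \cref{Wseries}, $W'$ automatically satisfies $\partial_\nu W'|_{\partial G'}=0$, so it is a genuine initial value, and by Parseval (orthonormality of $\{\phi_j'\}$ as a basis of $L^2(G')$) we have $\lVert W'\rVert_{L^2(G')}=\lVert W\rVert_{L^2(G)}=1$. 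Hence $W'$ is normalized and satisfies property~(1) of \cref{A0}; it remains only to verify properties~(2) and~(3).

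For the maximality property~(2), I would use \cref{Wsame} with $s'=s\circ(\Phi_0|_{\partial G})^{-1}$. If $W\in\mathcal W(s)$ with $s\in\max(\mathcal U)$, then $\widehat W\in\widehat{\mathcal W}(s)=\widehat{\mathcal W'}(s')$, so $W'\in\mathcal W'(s')$; the identical computation handles the case $W\in\mathcal W_b(s)$ with $s\in\max(\mathcal U_b(y))$, yielding $W'\in\mathcal W_b'(s')$. The point requiring attention is that $s'$ is again \emph{maximal}. Since $\Phi_0|_{\partial G}$ is a bijection, the relabeling $s\mapsto s'$ is an order isomorphism for the partial order \eqref{partOrder}, hence carries maximal elements to maximal elements; combined with the identifications $\mathcal U=\Phi_0^\ast\mathcal U'$ and $\mathcal U_b(y)=\Phi_0^\ast\big(\mathcal U_b'(y')\big)$ from \cref{Wsame}, this gives $s'\in\max(\mathcal U')$ in the first case and $s'\in\max\big(\mathcal U_b'(y')\big)$ with $y'=\Phi_0(y)$ in the second. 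Thus $W'$ satisfies property~(2).

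For the sign property~(3), I would invoke \cref{sameOnBndry}: because $W$ and $W'$ share the same Fourier coefficients, $u^{W'}(\Phi_0(z),t)=u^W(z,t)$ for every $z\in\partial G$ and $t\in\N$. As $\Phi_0|_{\partial G}:\partial G\to\partial G'$ is a bijection, every $z'\in\partial G'$ equals $\Phi_0(z)$ for a unique $z$, and the two waves agree identically in $t$ at corresponding boundary points. Consequently the arrival times coincide, $t_{z'}^{W'}=t_z^W$, and $u^{W'}(z',t_{z'}^{W'})=u^W(z,t_z^W)>0$ by property~(3) for $W$. This establishes property~(3) for $W'$, so $W'\in\mathcal A'$ and the inclusion follows.

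I do not anticipate a genuine obstacle here: the argument is bookkeeping assembled from the two preceding lemmas (\cref{Wsame} for the sets $\mathcal U,\mathcal U_b$ and the spaces $\widehat{\mathcal W},\widehat{\mathcal W_b}$, and \cref{sameOnBndry} for pointwise equality of boundary waves). The one step deserving care is the preservation of maximality in property~(2), where one must recognize that $s\mapsto s\circ(\Phi_0|_{\partial G})^{-1}$ is an order isomorphism of the posets $(\mathcal U,\leqslant)$ and $(\mathcal U',\leqslant)$ (and likewise for $\mathcal U_b(y)$ and $\mathcal U_b'(y')$), not merely a bijection of the underlying sets.
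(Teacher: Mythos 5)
Your proposal is correct and follows essentially the same route as the paper's proof: transfer the Fourier coefficients to form $W'$, verify normalization and the Neumann condition via \cref{Wseries}, get the positivity of first arrivals from \cref{sameOnBndry}, and obtain maximality from \cref{Wsame} together with the observation that the pullback $s\mapsto s\circ(\Phi_0|_{\partial G})^{-1}$ preserves the partial order, concluding by symmetry. The only cosmetic difference is the order in which conditions (2) and (3) of \cref{A0} are checked.
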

\begin{proof}
  Let $\Phi_0$ be a boundary isomorphism making the graphs spectrally
  isomorphic. Let $W\in\mathcal A$, i.e. $\lVert W \rVert_{L^2(G)}=1$
  and it satisfies the three conditions in \cref{A0}. Let $W' = \sum_j
  \widehat{W}(j) \phi_j'$. Then $\partial_\nu W'|_{\partial G'}=0$
  and
  \[
  \lVert W' \rVert_{L^2(G')}^2 = \sum_{j=1}^N |\widehat W(j)|^2 =
  \lVert W \rVert_{L^2(G)}^2 = 1.
  \]
  Since $u^W(z,t) = u^{W'}(\Phi_0(z),t)$ for all $z\in\partial
  G$ and $t\in\N$ by \cref{sameOnBndry}, we have
  $u^{W'}(z',t_z^{W'}) > 0$ for all $z'\in\partial G'$. It remains to
  verify Condition~(\ref{aPropMax}) for $W'$, i.e. that $W'$
  corresponds to a maximal element $s':\partial G'\to\N$.

  By \cref{Wsame}, we have for any $y\in G\cap N(\partial G)$,
  $$\max(\mathcal U) = \max(\Phi_0^\ast\mathcal
  U'),\quad \max(\mathcal U_b(y)) = \max \big(\Phi_0^\ast\mathcal
  U_b'(\Phi_0(y))\big).$$
  If
  $W\in\mathcal W(s)$ for some $s\in\max(\mathcal U)$, then
  \[
  \widehat{W'} = \widehat W \in \widehat{\mathcal W}(s) =
  \widehat{\mathcal W'}(s')
  \]
  for $s' = s \circ (\Phi_0|_{\partial G})^{-1}$ by
  \cref{Wsame}. Hence $W'\in\mathcal W'(s')$ for the given $s$ which
  is a maximal element of $\mathcal U$. Next we show
  that $s'$ is a maximal element of $\mathcal U'$. Since
  $\Phi_0^\ast s' = s$, $s \in \max(\mathcal U)$ and 
  $\mathcal U = \Phi_0^\ast \mathcal U'$ by \cref{Wsame}, we see that $\Phi_0^\ast s'
  \in \max(\Phi_0^\ast \mathcal U')$. The pullback does not affect the partial
  order, and hence $s' \in
  \max(\mathcal U')$.

  Similarly, if $W\in\mathcal W_b(s)$ for some $s\in\max(\mathcal
  U_b(y))$ and $y\in G\cap N(\partial G)$, then we see that
  $W'\in\mathcal W_b'(s')$ for $\Phi_0^\ast s' = s$. Then $\Phi_0^\ast s'\in\max\big(\Phi_0^\ast\mathcal
  U_b'(\Phi_0(y))\big)$, which implies that
  $s'\in\max(\mathcal U_b'(\Phi_0(y)))$. Moreover, by the definition of
  $\Phi_0$ (\cref{aPrioriGeom}), we have $\Phi_0(y) \in G' \cap
  N(\partial G')$. Hence $W'\in\mathcal W_b'(s')$ with
  $s'\in\max(\mathcal U_b'(y'))$ for some $y'\in G'\cap N(\partial
  G')$, just as required in Condition~(\ref{aPropMax}) for $\mathcal
  A'$. Thus $W\in\mathcal A$ implies that $W'\in\mathcal A'$, where
  $\widehat{W'} = \widehat{W}$. A symmetric proof shows
  $\widehat{\mathcal A'} \subseteq \widehat{\mathcal A}$.
\end{proof}

With \cref{Asame}, we can finally apply \cref{procedure} to deduce the set of
Fourier coefficients corresponding to initial values supported at one
single interior point. These initial values correspond to individual points of the graph. 

\begin{proposition} \label{A0same}
Let $\mathbb G, \mathbb G'$ be two finite connected weighted graphs with boundary
  satisfying \cref{assumption}, 
  and $q,q'$ be real-valued potential functions on $G, G'$. 
  Suppose $(\mathbb{G},q)$ is spectrally
  isomorphic to $(\mathbb{G}',q')$. Then
  $\widehat{\mathcal A_0} = \widehat{\mathcal A_0'}$.
\end{proposition}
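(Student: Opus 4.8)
The plan is to push the intrinsic characterization of $\mathcal{A}_0$ provided by \cref{procedure} through the Fourier (hat) transform and then invoke \cref{Asame}. The starting point is the observation that the map $W \mapsto \widehat{W}$ is a linear isometry from $L^2(G)$, equipped with the inner product \eqref{innerproduct}, onto $\R^N$ with its standard Euclidean inner product. Indeed, since $\{\phi_j\}_{j=1}^N$ is an orthonormal basis of $L^2(G)$, Parseval's identity yields
\[
\langle W_1, W_2 \rangle_{L^2(G)} = \sum_{j=1}^N \widehat{W_1}(j)\,\widehat{W_2}(j) = \langle \widehat{W_1}, \widehat{W_2} \rangle_{\R^N},
\]
and the analogous identity holds on $\mathbb{G}'$ using $\{\phi_j'\}_{j=1}^N$. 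Hence every inner product appearing in \cref{procedure} becomes the \emph{same} standard inner product on $\R^N$ once we pass to Fourier coefficients, whether we start from $\mathbb{G}$ or from $\mathbb{G}'$.

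With this isometry in hand, I would restate \cref{procedure} entirely in $\R^N$. Because both properties characterizing $\tilde{\mathcal{A}} = \mathcal{A}_0$ refer only to the inner product and to the set $\mathcal{A}$, the isometry shows that $\widehat{\mathcal{A}_0}$ is precisely the unique subset $\mathcal{B} \subseteq \widehat{\mathcal{A}}$ that is simultaneously an orthogonal basis of the linear span of $\widehat{\mathcal{A}}$ and satisfies the sign condition: for every $v \in \widehat{\mathcal{A}} - \mathcal{B}$ there exists $w \in \mathcal{B}$ with $\langle v, w \rangle_{\R^N} < 0$. Running the same reasoning on the second graph, $\widehat{\mathcal{A}_0'}$ is characterized as the unique subset of $\widehat{\mathcal{A}'}$ possessing this identical abstract property in $\R^N$ with the same standard inner product.

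The conclusion then follows by applying \cref{Asame}, which gives $\widehat{\mathcal{A}} = \widehat{\mathcal{A}'}$. Since $\widehat{\mathcal{A}_0}$ and $\widehat{\mathcal{A}_0'}$ are each singled out by the very same uniqueness property, applied to the very same subset of $\R^N$ and the very same inner product, uniqueness forces $\widehat{\mathcal{A}_0} = \widehat{\mathcal{A}_0'}$.

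The step I expect to demand the most care is the first one: verifying that the hat-transform preserves the inner product, so that Property~(2) of \cref{procedure} --- a statement about the \emph{sign} of an inner product --- is both well-defined and identical on the two sides. This is exactly where the orthonormality of the Neumann eigenfunctions in $L^2(G)$ is indispensable; without it the Fourier coefficients would not retain the metric information, and the sign condition defining $\mathcal{A}_0$ could not be recovered from $\widehat{\mathcal{A}}$ alone. Once the isometry is established, the remainder is formal, since \cref{procedure} has already carried out the genuinely difficult geometric work of pinning down $\mathcal{A}_0$ by an intrinsic inner-product criterion.
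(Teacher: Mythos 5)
Your proposal is correct and follows essentially the same route as the paper's proof: both rest on the Parseval identity for the hat-transform, the equality $\widehat{\mathcal{A}} = \widehat{\mathcal{A}'}$ from \cref{Asame}, and the uniqueness direction of \cref{procedure}. The paper merely executes your abstract ``transfer the characterization to $\R^N$'' step concretely, by defining $\widetilde{\mathcal{A}'} = \{\sum_j c_j \phi_j' \mid (c_j) \in \widehat{\mathcal{A}_0}\}$ and verifying Properties~(1) and~(2) of \cref{procedure} for it on $\mathbb{G}'$, which is the same argument in different packaging.
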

\begin{proof}
  We need to define a subset of $L^2(G')$, such that its Fourier transform is
  equal to $\widehat{\mathcal A_0}$ and satisfies the conditions
  in \cref{procedure}. To write relevant notations clearly, we use $\mathscr F$ to denote the Fourier transform in
  this proof. Let
  \[
  \widetilde{\mathcal A'} = \bigg\{\sum_{j=1}^N c_j \phi_j'
  \,\bigg| \, (c_j)_{j=1}^N \in \mathscr F{\mathcal A_0} \bigg\}.
  \]

  For $f,g\in L^2(G)$, we know that $\langle f,g \rangle_{L^2(G)} = \sum_j
  \mathscr F f(j) \mathscr F g(j) = \mathscr F f \cdot \mathscr F g$,
  where $\cdot$ is the inner product in $\R^N$. Furthermore,
  \[
  \mathscr F {\operatorname{span} \mathcal A} = \operatorname{span}
  \mathscr F{\mathcal A}.
  \]
  Since $\mathcal A_0$ is an orthonormal basis of
  $\operatorname{span} \mathcal A$, these two observations indicate that
  $\mathscr F{\mathcal A_0}$ is an orthonormal basis of
  $\operatorname{span}\mathscr F{\mathcal A}$. The latter is equal to
  $\operatorname{span} \mathscr F{\mathcal A'}$ by \cref{Asame}. Hence
  we can deduce that $\mathscr F{\widetilde{\mathcal A'}} = \mathscr
  F{\mathcal A_0}$ is an orthonormal basis of
  $\operatorname{span}\mathscr F{\mathcal A'}$. Thus 
  $\widetilde{\mathcal A'}$ is an
  orthonormal basis of $\operatorname{span}\mathcal
  A'$. Condition~(\ref{ONbasis}) has been verified.

  Next, let us verify Condition~(\ref{negativity}) in \cref{procedure}.
  Let $W' \in \mathcal A' - \widetilde{\mathcal A'}$ and $W
  = \sum_j \mathscr F{W'}(j) \phi_j$. Since $\mathscr
  F{\mathcal A} = \mathscr F{\mathcal A'}$ by \cref{Asame}, we have
  \[
  \mathscr F W = \mathscr F {W'} \in \mathscr F{\mathcal A'} -
  \mathscr F{\widetilde{\mathcal{A}'}} = \mathscr F{\mathcal A} - \mathscr
  F{\mathcal{A}_0},
  \]
  which shows $W \in \mathcal A - \mathcal A_0$. By \cref{procedure}, there exists
  $\widetilde{W} \in \mathcal A_0$ such that $\langle W, \widetilde W
  \rangle_{L^2(G)} < 0$. Take $\widetilde{W'} = \sum_j
  \mathscr F{\widetilde{W}}(j) \phi_j'$. Then $\widetilde{W'} \in
  \widetilde{\mathcal A'}$ by the latter's definition. Moreover,
  \[
  \langle W', \widetilde{W'} \rangle_{L^2(G)} = \mathscr F{W'} \cdot
  \mathscr F{\widetilde{W'}} = \mathscr F{W} \cdot \mathscr
  F{\widetilde{W}} = \langle W,\widetilde W\rangle_{L^2(G)} < 0,
  \]
  which shows that Condition~(\ref{negativity}) in \cref{procedure} holds for
  $\widetilde{\mathcal A'}$. Hence \cref{procedure} yields $\widetilde{\mathcal A'}
  = \mathcal A_0'$, and the lemma follows.
\end{proof}

Recall that $\mathcal A_0$ is the set of normalized initial values
supported at one single interior point. Since the Fourier transforms of
these sets are the same, it makes sense to identify interior vertices via their Fourier transforms. 
We will show that this identification gives the desired bijection $\Phi$ in Theorem \ref{structureThm}.

\begin{lemma} \label{oneToOne}
  Let $\mathbb G, \mathbb G'$ be two finite connected weighted graphs with boundary
  satisfying \cref{assumption}, 
  and $q,q'$ be real-valued potential functions on $G, G'$. 
  Suppose $(\mathbb{G},q)$ is spectrally
  isomorphic to $(\mathbb{G}',q')$. 
  We define a relation $\equiv$ on $G \times G'$ by
  \[
  x\equiv x' \quad\Longleftrightarrow\quad \widehat{W_x} =
  \widehat{W_{x'}},\;\, \widehat{W_x}\in\widehat{\mathcal A_0}, \;\, \widehat{W_{x'}}\in\widehat{\mathcal A_0'}\, .
  \]
  Then $\equiv$ is a
  one-to-one correspondence.
\end{lemma}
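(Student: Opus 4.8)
The plan is to realize the relation $\equiv$ as the graph of a bijection obtained by composing three maps: the identification of interior vertices $x\in G$ with the single-point initial values $W_x\in\mathcal A_0$, the Fourier transform $W\mapsto\widehat W$, and the symmetric identification on the primed side, all glued together by the set equality $\widehat{\mathcal A_0}=\widehat{\mathcal A_0'}$ already furnished by \cref{A0same}. Since the analytic content has been established, the remaining argument is essentially bookkeeping about these three bijections.

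First I would check that $x\mapsto W_x$ is a bijection from $G$ onto $\mathcal A_0$. For a fixed $x\in G$, the space of initial values whose support meets $G$ only at $x$ is one-dimensional: on $G$ such a function is a scalar multiple of the indicator $\mathbf 1_{\{x\}}$, and its values on $\partial G$ are then forced by the Neumann condition \eqref{uValuesOnBndry}. The normalization $\lVert W_x\rVert_{L^2(G)}=1$ together with the sign convention $W_x(x)>0$ pins down a unique representative, so $x\mapsto W_x$ is well defined and injective, and by the very definition of $\mathcal A_0$ it is onto. Hence $|\mathcal A_0|=|G|=N$, and likewise $x'\mapsto W_{x'}$ is a bijection from $G'$ onto $\mathcal A_0'$.

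Next I would invoke \cref{Wseries}, which says the Fourier map $W\mapsto\widehat W$ from $L^2(G)$ to $\R^N$ is an invertible linear map; in particular it is injective, so $x\mapsto\widehat{W_x}$ is a bijection from $G$ onto $\widehat{\mathcal A_0}$, and symmetrically $x'\mapsto\widehat{W_{x'}}$ is a bijection from $G'$ onto $\widehat{\mathcal A_0'}$. By \cref{A0same} these two target sets coincide as subsets of $\R^N$, namely $\widehat{\mathcal A_0}=\widehat{\mathcal A_0'}$. Consequently, for each $x\in G$ the vector $\widehat{W_x}$ lies in $\widehat{\mathcal A_0'}$ and equals $\widehat{W_{x'}}$ for exactly one $x'\in G'$; this assignment $x\mapsto x'$ is precisely the relation $\equiv$, and it is a bijection because it is the composite of the bijection $G\to\widehat{\mathcal A_0}$ with the inverse of the bijection $G'\to\widehat{\mathcal A_0'}$.

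The argument carries no serious obstacle, since \cref{A0same} already delivers the hard equality $\widehat{\mathcal A_0}=\widehat{\mathcal A_0'}$. The only point demanding care is the well-definedness and injectivity of $x\mapsto W_x$: one must use that a single-point-supported initial value is determined up to scale on $G$ and that its boundary values are linearly forced, so that the normalization and sign convention leave no residual ambiguity. Given this, $\equiv$ is automatically single-valued in both directions, which is exactly the claim.
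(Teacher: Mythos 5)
Your proof is correct and takes essentially the same route as the paper's: both rest on \cref{A0same} (the equality $\widehat{\mathcal A_0}=\widehat{\mathcal A_0'}$) combined with the invertibility of the Fourier transform from \cref{Wseries} to get existence and uniqueness of the partner vertex in each direction. The only difference is one of detail: you spell out explicitly that $x\mapsto W_x$ is a well-defined bijection from $G$ onto $\mathcal A_0$ (one-dimensionality of the space of single-point-supported initial values, plus normalization and sign convention), a point the paper's proof leaves implicit.
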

\begin{proof}
  Let us verify that $\equiv$ satisfies the conditions for a one-to-one
  correspondence. We make use of \cref{A0same} which gives 
  $\widehat{\mathcal A_0} = \widehat{\mathcal A_0'}$.

  ``Every $x\in G$ is paired with exactly one $x'\in G'$'': Let $x\in
  G$. Then $\widehat{W_x}\in\widehat{\mathcal A_0}=\widehat{\mathcal
    A_0'}$. The latter set consists of all elements of the form
  $\widehat{W_{x'}}$ for $x'\in G'$. Since the Fourier transform is
  invertible, there is a unique $x'\in G'$ such that
  $\widehat{W_{x'}}=\widehat{W_x}$.

  ``For any $x'\in G'$, there exists a unique $x\in G$ such that $x\equiv
  x'$'': the same proof as above.
\end{proof}

\begin{definition} \label{phiDef}
  With the assumptions of \cref{oneToOne}, given a boundary-isomorphism $\Phi_0$ which makes $(\mathbb G,q)$ and $(\mathbb G',q')$
  spectrally isomorphic, we define a bijective map
  $\Phi:G\cup\partial G \to G'\cup\partial G'$ by
  \[
  \Phi(x) = \begin{cases}
    \Phi_0(x), &x\in\partial G,\\
    x', &x\in G,\, x\equiv x'.
  \end{cases}
  \]
\end{definition}

%Note that it is not a-priori known if $\Phi(x) = \Phi_0(x)$ for $x\in
%G\cap N(\partial G)$. This question is interesting, but is not
%necessary for solving the inverse problem.

With this map $\Phi$, we have the point-equivalence between these two graphs.
Now we show
that $\Phi$ also preserves the edge structure.
From what we have done in this section,
the boundary spectral data
provide the knowledge of the Fourier transform of initial values, boundary
values of waves, and the inner product of waves. We use this information to
determine if there is an edge between two points.
\begin{lemma} \label{boundaryConnection}
  Let $\mathbb G$ be a finite weighted graph with boundary, and $x\in G$,
  $z\in\partial G$. Then $x\sim z$ if and only if $W_x(z) \neq 0$.
\end{lemma}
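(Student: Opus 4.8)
The plan is to read off the value $W_x(z)$ directly from the Neumann boundary condition, which the initial value $W_x$ satisfies by construction. Recall from \cref{A0} that $W_x$ is characterized (up to normalization) by $\partial_\nu W_x|_{\partial G} = 0$, $\supp(W_x)\cap G = \{x\}$, and $W_x(x) > 0$; in particular $W_x(y) = 0$ for every interior vertex $y \neq x$. Since $\partial_\nu W_x|_{\partial G} = 0$, the boundary values of $W_x$ are not free but are determined by its interior values through the formula \eqref{uValuesOnBndry}.

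First I would apply \eqref{uValuesOnBndry} at the boundary point $z$, which gives
\[
W_x(z) = \sum_{\substack{y\sim z\\ y\in G}} g_{yz}\,W_x(y) \Big/ \sum_{\substack{y\sim z\\ y\in G}} g_{yz}.
\]
Under the standing assumption of this section that $\mathbb G = \mathbb G_{re}$, there are no edges between boundary points, so in a connected graph every boundary vertex has at least one interior neighbour; hence the denominator is a sum of strictly positive weights and is nonzero, so the right-hand side is well defined.

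Next I would evaluate the numerator using $\supp(W_x)\cap G = \{x\}$. Every summand with $y \neq x$ vanishes since $W_x(y) = 0$, so the numerator collapses to the single term $g_{xz} W_x(x)$ when $x \sim z$, and to $0$ when $x \not\sim z$. In the first case $W_x(z) = g_{xz}W_x(x)\big/\sum_{y\sim z,\,y\in G} g_{yz} > 0$, because $g_{xz} > 0$ and $W_x(x) > 0$, so in particular $W_x(z)\neq 0$; in the second case $W_x(z) = 0$. This establishes both directions of the biconditional simultaneously.

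There is no substantial obstacle here: the statement is an immediate consequence of the Neumann boundary formula \eqref{uValuesOnBndry} together with the single-point support of $W_x$. The only point requiring a moment's care is the nonvanishing of the denominator, which is guaranteed by the reduced-graph assumption $\mathbb G = \mathbb G_{re}$ ensuring that each boundary vertex touches the interior.
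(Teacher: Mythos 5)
Your proof is correct and takes exactly the paper's approach: the paper's own proof is the one-line remark that the claim ``directly follows from $\partial_\nu W_x|_{\partial G} = 0$ and \eqref{uValuesOnBndry}'', which is precisely the computation you spell out. Your extra care about the nonvanishing denominator (guaranteed by $\mathbb G = \mathbb G_{re}$ and connectedness) is a valid detail the paper leaves implicit.
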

\begin{proof}
  This directly follows from $\partial_\nu W_x |_{\partial G}= 0$ and \eqref{uValuesOnBndry}.
\end{proof}

\begin{lemma} \label{insideConnection}
  Let $\mathbb G$ be a finite weighted graph with boundary satisfying
  \cref{as1case2} of \cref{assumption}, and $x,y\in G$. Then $x\sim y$ if and only if
  \[
  \min\big\{t\in\N \,\big|\, \langle u^{W_x}(\cdot,t), W_y \rangle_{L^2(G)}
  \neq 0 \big\} = 2.
  \]
\end{lemma}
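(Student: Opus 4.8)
The plan is to reduce the inner-product condition to a pointwise statement about the wave $u^{W_x}$ evaluated at $y$, and then compute this wave explicitly at the first three time-steps using the wave equation. The decisive ingredient will be \cref{as1case2} of \cref{assumption}, which is exactly what prevents the wave from reaching $y$ from $x$ in two steps through a shared boundary neighbour when $x\not\sim y$. First I would note that since $\supp(W_y)\cap G=\{y\}$ and the inner product \eqref{innerproduct} is computed only over interior vertices, we have $\langle u^{W_x}(\cdot,t),W_y\rangle_{L^2(G)}=\mu_y\,u^{W_x}(y,t)\,W_y(y)$. As $\mu_y>0$ and $W_y(y)>0$, this is nonzero precisely when $u^{W_x}(y,t)\neq 0$, so the claim reduces to showing that $\min\{t\in\N\mid u^{W_x}(y,t)\neq 0\}=2$ holds if and only if $x\sim y$.

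Next I would compute the first three time-steps of $u^{W_x}(y,\cdot)$. Since $u^{W_x}(\cdot,0)=u^{W_x}(\cdot,1)=W_x$ and $\supp(W_x)\cap G=\{x\}$, we have $u^{W_x}(y,0)=u^{W_x}(y,1)=W_x(y)$, which vanishes whenever $x\neq y$. Feeding this into the update formula \eqref{indUpdate} with $p=y$ at time $\tau=1$ collapses all terms except the Laplacian, giving $u^{W_x}(y,2)=\Delta_G W_x(y)$. Using the Neumann boundary condition \eqref{uValuesOnBndry}, the boundary values of $W_x$ satisfy $W_x(z)>0$ if $z\sim x$ and $W_x(z)=0$ otherwise. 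Substituting into \eqref{Laplaciandef} and using $W_x(y)=0$ yields
\[
\mu_y\,u^{W_x}(y,2)=\sum_{\substack{w\sim y\\ w\in G\cup\partial G}} g_{yw}\,W_x(w),
\]
where only the neighbours $w$ of $y$ with $W_x(w)\neq 0$ contribute, namely $w=x$ (which occurs only if $x\sim y$) and those boundary vertices $z\sim y$ that also satisfy $z\sim x$.

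Finally I would invoke \cref{as1case2} of \cref{assumption}: if a boundary vertex $z$ is adjacent to both $x$ and $y$, then $x\sim y$. Hence the boundary contributions to the sum above are present only when $x\sim y$. In the case $x\sim y$, the term $g_{xy}W_x(x)$ is strictly positive and every surviving boundary term $g_{yz}W_x(z)$ is nonnegative, so $u^{W_x}(y,2)>0$; combined with $u^{W_x}(y,0)=u^{W_x}(y,1)=0$ this gives minimum exactly $2$. In the case $x\not\sim y$, both the interior and boundary contributions vanish, so $u^{W_x}(y,2)=0$; together with the vanishing at $t=0,1$ (or, in the degenerate subcase $x=y$, the value $u^{W_x}(y,0)=W_x(x)>0$ giving minimum $0$), we conclude the minimum is never $2$. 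The main obstacle is the correct use of \cref{as1case2}: it is precisely this hypothesis that guarantees the boundary sum is empty unless $x\sim y$, so that no ``shortcut through the boundary'' produces a false positive at $t=2$. A secondary point worth verifying is the absence of cancellation in the $x\sim y$ case, which is automatic here because the positivity of the weights $g$ and of $W_x$ forces all contributing terms to share the same sign.
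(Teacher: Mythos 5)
Your proof is correct and follows essentially the same route as the paper: both compute the wave $u^{W_x}$ explicitly up to time $t=2$, use the Neumann condition \eqref{uValuesOnBndry} to identify the boundary values of $W_x$, and invoke \cref{as1case2} of \cref{assumption} to rule out a shortcut through a common boundary neighbour when $x\not\sim y$, then split into the cases $x\sim y$, $x=y$, and $d(x,y)\geqslant 2$. The paper merely states the resulting support inclusion and positivity at $t=2$ in condensed form, whereas you spell out the same computation, including the reduction of the inner product to the pointwise value $u^{W_x}(y,t)$ that the paper leaves implicit.
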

\begin{proof}
Recall that $W_x$ stands for a function on $G\cup \partial G$ satisfying $\partial_{\nu}W_x|_{\partial G}=0$, $\textrm{supp}(W_x)\cap G=\{x\}$ and $W_x(x)>0$.
  By \cref{as1case2} of \cref{assumption} and calculating the wave $u^{W_x}$ from
  \eqref{waveequation} up to time $t=2$, we see that
  \[
  G \cap \supp\big( u^{W_x}(\cdot,t) \big) \subseteq \begin{cases}
    \{x\}, &t\leqslant 1,\\ G \cap N(x,1), &t=2.
  \end{cases}
  \]
Note that \cref{as1case2} of \cref{assumption} is essential to the claim of the support at $t=2$. 
More precisely, in the interior, the wave $u^{W_x}(\cdot,2)$ can be nonzero only at interior points adjacent to $x$, and interior points adjacent to boundary points that are adjacent to $x$. Under \cref{as1case2} of \cref{assumption}, the interior points in the latter case are also adjacent to $x$.
  
  Moreover, we have $u^{W_x}(\cdot,t)>0$ on $\big(G\cap
  N(x,1) \big) - \{x\}$ at $t=2$.
Indeed, for any $p\in \big(G\cap
  N(x,1) \big) - \{x\}$, observe that $\big(\Delta_G u^{W_x}\big)(p,1)>0$. This is because $u^{W_x}(p,1)=W_x(p)=0$, and the only nonzero contributions to the Laplacian come from point $x$ and boundary points adjacent to $x$. One can see that both types of contributions to the Laplacian are positive due to $W_x(x)>0$ and the boundary value given by \eqref{uValuesOnBndry}. 
Moreover, the positive contribution from point $x$ is always present since $p\in N(x,1)-\{x\}$, i.e. $p\sim x$.  
Thus, the Laplacian satisfies $\big(\Delta_G u^{W_x}\big)(p,1)>0$, which yields that the wave $u^{W_x}(p,2)>0$ at $t=2$.

  If $x\sim y$, then $y\in \big(G\cap
  N(x,1) \big) - \{x\}$ and hence the minimum in question
   is equal to $2$. If $x\not\sim y$, then either $x=y$ in which
  case the minimum is $0$, or $d(x,y)\geqslant 2$ in which case $y\notin G \cap
  \supp(u^{W_x}(\cdot,t))$ for $t\leqslant 2$, and the minimum is more
  than $2$.
\end{proof}

\medskip
Finally, we are ready to prove the main theorems.

\begin{proof}[Proof of \cref{structureThm}]
  The first property of $\Phi|_{\partial G}$ being identical to
  $\Phi_0|_{\partial G}$ follows by definition. It remains to verify
  the second property that the edge relations are
  preserved by $\Phi$ and its inverse. Let $p_1,p_2\in G\cup\partial
  G$.

  If $p_1,p_2\in G$, then by \cref{insideConnection},
  \[
  p_1\sim p_2 \quad\Longleftrightarrow\quad \min \big\{t\in\N \,\big|\, \langle
  u^{W_{p_1}}(\cdot,t), W_{p_2} \rangle_{L^2(G)} \neq 0 \big\} = 2.
  \]
  Let us write the inner product using
  the Fourier transform of the initial values $W_{p_1},W_{p_2}$. By \cref{uRepresentation},
  \begin{align*}
    u^{W_{p_1}}(\cdot,t) &= \sum_{\{j\mid\lambda_j = 0\}}
    \widehat{W_{p_1}}(j) \phi_j + \sum_{\{j\mid\lambda_j = 4\}}
    \widehat{W_{p_1}}(j) (-2t+1)(-1)^t \phi_j \\ &\phantom{=} +
    \sum_{\{j\mid\lambda_j\notin\{0,4\}\}} \widehat{W_{p_1}}(j)
    \frac{\beta_{j,1}^{t}-\beta_{j,2}^{t} -
      (\beta_{j,1}^{t-1}-\beta_{j,2}^{t-1})}{\beta_{j,1}-\beta_{j,2}} \phi_j \, .
  \end{align*}
  Taking the inner product with $W_{p_2} = \sum_j
  \widehat{W_{p_2}} \phi_j$ yields
  \begin{align*}
    &\langle u^{W_{p_1}}(\cdot,t), W_{p_2} \rangle_{L^2(G)}
    \\ &\qquad= \sum_{\{j\mid\lambda_j = 0\}} \widehat{W_{p_1}}(j)
    \widehat{W_{p_2}}(j) + \sum_{\{j\mid\lambda_j = 4\}}
    \widehat{W_{p_1}}(j) \widehat{W_{p_2}}(j) (-2t+1)(-1)^t
    \\ &\qquad\phantom{=} + \sum_{\{j\mid\lambda_j\notin\{0,4\}\}}
    \widehat{W_{p_1}}(j) \widehat{W_{p_2}}(j)
    \frac{\beta_{j,1}^{t}-\beta_{j,2}^{t} -
      (\beta_{j,1}^{t-1}-\beta_{j,2}^{t-1})}{\beta_{j,1}-\beta_{j,2}} \\ &\qquad=
    \langle u^{W_{\Phi(p_1)}}(\cdot,t), W_{\Phi(p_2)}
    \rangle_{L^2(G')}.
  \end{align*}
  The second equality is because $\widehat{W_{p_1}}=\widehat{W_{\Phi(p_1)}}$,
  $\widehat{W_{p_2}} = \widehat{W_{\Phi(p_2)}}$, considering that $p_1\equiv
  \Phi(p_1)$, $p_2\equiv \Phi(p_2)$ and the eigenvalues are the same for
  $\mathbb G,\mathbb G'$. Hence the minimal time in question is
  equal to $2$ for $p_1,p_2$ in $\mathbb G$ if and only if it is
  so for $\Phi(p_1),\Phi(p_2)$ in $\mathbb G'$.

  Next, consider the edge between an interior point and a boundary point.
  If $p_1\in G$, $p_2\in\partial G$, then by \cref{boundaryConnection},
  \[
  p_1 \sim p_2 \quad\Longleftrightarrow\quad W_{p_1}(p_2) \neq 0.
  \]
  Since $p_1 \equiv \Phi(p_1)$, $p_2 \equiv \Phi(p_2)$
  and $\phi_j = \phi_j' \circ \Phi_0 = \phi_j' \circ \Phi$ on
  $\partial G$, we have
  \[
  W_{p_1}(p_2) = \sum_{j=1}^N \widehat{W_{p_1}}(j) \phi_j(p_2) =
  \sum_{j=1}^N \widehat{W_{\Phi(p_1)}}(j) \phi_j'\big(\Phi(p_2)\big) =
  W_{\Phi(p_1)}\big(\Phi(p_2)\big).
  \]
  Hence $p_1\sim p_2$ if and only if $\Phi(p_1) \sim'
  \Phi(p_2)$. 
  
  Finally, the
  case of $p_1,p_2\in\partial G$ is trivial, because 
  $\Phi(p_1),\Phi(p_2)\in\partial G'$ and 
  the edge structure on the boundary is \emph{a priori} given.
\end{proof}

\medskip
In \cref{coefficientThm}, we assume that the isomorphic structure
is already known and the vertices of $\mathbb G'$ have been identified with
vertices of $\mathbb G$ via the $\Phi$-correspondence. 
In terms of notations, a vertex $x$ of $\mathbb G$ can also denote a vertex of $\mathbb G'$,
which exactly refers to the vertex $\Phi(x)$ of $\mathbb G'$.

\begin{proof}[Proof of \cref{coefficientThm}]
  Recall from Definition \ref{A0} that
  $W_x\in \mathcal{A}_0$, $W'_x\in \mathcal{A}_0'$ are defined as the $L^2$-normalized initial values satisfying
  $\partial_\nu W_x |_{\partial G} = \partial_\nu W'_x |_{\partial G'}=0$ with
  $G\cap\supp(W_x) = G'\cap\supp(W'_x) = \{x\}$. Then the definition of the $L^2$-norm \eqref{innerproduct} yields
  $
  W_x(x) = \mu_x^{-1/2}$ and $W'_x(x) = {\mu'}_x^{-1/2}.
 $
%  because $\mu$ and $\mu'$ are involved in the $L^2$-norms on $G$ and
%  $G'$. These are not a-priori the same.

  Now let us prove (\ref{coefCase1}). Assume $\mu=\mu'$. First, we prove $g=g'$.
  
  %and so we can assume that
  %$\mu=\mu'$ throughout this first part of the proof where we show
  %that $g=g'$ and $q=q'$. 
  For $x,y\in G$ with $x\sim y$, we have
  \begin{align}
    &\big\langle (-\Delta_G +q)W_x, W_y\big\rangle_{L^2(G)} = \mu_y
    W_y(y) \big((-\Delta_G+q)W_x\big)(y) = \mu_y W_y(y)
    (-\Delta_G W_x)(y) \notag \\ &\qquad = - W_y(y)
    \sum_{\substack{p\sim y\\p\in G\cup\partial G}} g_{yp} \big(
    W_x(p) - W_x(y) \big) = - W_y(y) \sum_{\substack{p\sim y\\p\in
        G\cup\partial G}} g_{yp} W_x(p). \label{gDetermination}
  \end{align}
  On the other hand, this inner product can be determined from the
  spectral data. Namely by (\ref{neumannEigFunc}), we have
  \beq
 &   &\big\langle (-\Delta_G +q)W_x, W_y\big\rangle_{L^2(G)} =
    \bigg\langle \sum_j \widehat{W_x}(j) \lambda_j \phi_j, \sum_j
    \widehat{W_y}(j) \phi_j \bigg\rangle_{L^2(G)} \notag \\ & & =
    \sum_j \lambda_j \widehat{W_x}(j) \widehat{W_y}(j) = \sum_j
    \lambda'_j \widehat{W'_x}(j) \widehat{W'_y}(j) =
    \big\langle (-\Delta_{G'} +q')W'_x,
    W'_y\big\rangle_{L^2(G')} \, . \label{gDetermination2}
  \eeq
  since $\lambda_j=\lambda'_j$, $\widehat{W_x} = \widehat{W'_x}$ and
  $\widehat{W_y} = \widehat{W'_y}$, due to \cref{oneToOne,phiDef}.

\smallskip
We consider two cases from here. 
  
  {\bf (i)} If $x\in
  G-N(\partial G)$ in \eqref{gDetermination}, then for all $p\in G\cup\partial G$, 
  we have $W_x(p) = 0$
  unless $p=x$ (since $x$ is not adjacent to the boundary). 
  Then \eqref{gDetermination} and \eqref{gDetermination2} yield
  \begin{align*}
    -W_y(y) W_x(x) g_{xy} &= \big\langle (-\Delta_G +q)W_x,
    W_y\big\rangle_{L^2(G)} = \big\langle (-\Delta_{G'} +q')W'_x,
    W'_y\big\rangle_{L^2(G')} \\ &= -W'_y(y) W'_x(x) g'_{xy}.
  \end{align*}
  This implies that $g_{xy} = g'_{xy}$, since
  $W_y(y) = \mu_y^{-1/2} =
  {\mu'}_y^{-1/2} = W'_y(y)$ and similarly $W_x(x)=W'_x(x)$.
  
  {\bf (ii)} It remains to consider the case where $x,y\in
  G\cap N(\partial G)$. In this case, \eqref{gDetermination} and
  \eqref{gDetermination2} reduce to
  \beq\label{gDetermination3}\\
     W_y(y)W_x(x) g_{xy} +W_y(y)\hspace{-3mm} \sum_{\substack{z\sim y, z\sim
        x\\ z\in\partial G}} g_{yz} W_x(z)  = 
    W'_y(y)W'_x(x) g'_{xy} +W'_y(y)\hspace{-3mm} \sum_{\substack{z\sim y, z\sim
        x\\ z\in\partial G}} g'_{yz} W'_x(z). \nonumber\hspace{-1cm}
  \eeq
  Since $\partial_\nu W_x(z) = \partial_\nu W'_x(z) = 0$, by
  \eqref{uValuesOnBndry} we see that
  \[
  W_x(z) = \frac{g_{xz}W_x(x)}{\sum_{{p\sim z,p\in G}}
    g_{pz}} = \frac{g'_{xz}W'_x(x)}{\sum_{{p\sim z,p\in G}}
    g'_{pz}} = W'_x(z),
  \]
  where we have used that $W_x(x)=W'_x(x)$ and $g=g'$ on edges from the boundary.
 By \eqref{gDetermination3}, we see that $g_{xy}=g'_{xy}$. This concludes the unique determination of $g$.
%  this time for $x,y\in N(\partial G)-\partial G$, which was the remaining
%  case\footnote{For the case $x\in G \cap N(\partial G), y\in G -
%  N(\partial G)$ consider $\langle(-\Delta_G + q)W_y,
%  W_x\rangle_{L^2(G)}$.}.

  Next, we prove $q=q'$, assuming
  $\mu=\mu'$. Let $x\in G$. The spectral data
  determines the following inner product:
  \begin{align*}
    &\langle (-\Delta_G+q)W_x,W_x \rangle_{L^2(G)}=\mu_x
    W_x(x)\big((-\Delta_G+q)W_x\big)(x)
    \\ &\qquad=-W_x(x)\sum_{\substack{p\sim x\\p\in G\cup\partial G}}
    g_{xp}\big(W_x(p)-W_x(x)\big)+q(x) .
  \end{align*}
  Observe that all relevant quantities have already been uniquely determined as above, during the proof for the unique determination of $g$. Hence $q=q'$.

  \smallskip
  At last, we prove (\ref{coefCase2}). Assume $q=q'=0$. For the graph Laplacian (with zero potential), there exists $j_0$
  such that $\lambda_{j_0}=\lambda'_{j_0}=0$ and
  $\phi_{j_0}=\phi'_{j_0} = c$ for some constant $c\in\R$. Given any
  $x\in G$, we have $\langle W_x, \phi_{j_0}\rangle_{L^2(G)} = \mu_x
  W_x(x) c$, where $W_x(x) =
  \mu_x^{-1/2}$. Then
  \[
  \mu_x = \frac{\langle W_x, \phi_{j_0}\rangle_{L^2(G)}^2}{c^2} =
  \frac{\big(\widehat{W_x}(j_0)\big)^2}{c^2} =
  \frac{\big(\widehat{W'_x}(j_0)\big)^2}{c^2} = \frac{\langle W'_x,
    \phi'_{j_0}\rangle_{L^2(G')}^2}{c^2} =\mu'_x.
  \]
  Hence $\mu=\mu'$. Then the
  assumption of (\ref{coefCase1}) is satisfied and therefore $g=g'$.

\smallskip  
  In particular, if $\mu={\rm deg}_{\mathbb{G}}$, $\mu'={\rm deg}_{\mathbb{G}'}$, then $\mu=\mu'$ since $\Phi$ preserves the edge structure by \cref{structureThm}. Hence the conclusion follows from (\ref{coefCase1}).
\end{proof}

\smallskip
\footnotesize

\end{document}